\documentclass[10pt]{amsart}

\usepackage[utf8]{inputenc}
\usepackage{amsmath,amssymb,amsthm}
\usepackage{mathpazo}
\usepackage[a4paper,margin=1in]{geometry}
\usepackage{hyperref}
\hypersetup{hidelinks}

\newtheorem{theorem}{Theorem}[section]
\newtheorem{lemma}[theorem]{Lemma}
\newtheorem{proposition}[theorem]{Proposition}

\numberwithin{equation}{section}

\newcommand{\rpc}{\operatorname{rpc}}
\newcommand{\col}{\operatorname{col}}
\newcommand{\E}{\mathbb E}
\newcommand{\Pp}{\mathbb P}
\newcommand{\cH}{\mathcal H}
\newcommand{\cP}{\mathcal P}

\title[Sharp rainbow path covers]
{Sharp Rainbow Path Covers in Dense and Complete Multipartite Graphs}

\author{Xiao-Chuan Liu}
\address[Liu]{Departamento de Matemática,
 Universidade Federal de Pernambuco,
	Avenida Jornalista Aníbal Fernandes, Cidade Universitária, Recife, Brasil}
\email{xiaochuan.liu@ufpe.br}
\author{Boyan Xu}
\address[Xu]{School of Data Science and Information Engineering, Guizhou Minzu University, Guiyang, Guizhou Province, 550025, China}
\email{boyan04518@gmail.com}
\author{Xu Yang}
\address[Yang]{Instituto de Computação, Universidade Federal de Alagoas,
	Av. Lourival Melo Mota, S/N, Maceió, 57072-900, Brasil}
\email{yang@ic.ufal.br}

\begin{document}

\begin{abstract}
A path in a properly edge-colored graph is rainbow if its edges have pairwise distinct colors. For a proper edge-coloring $c$ of a graph $G$, let $\rpc(G,c)$ be the minimum number of rainbow paths needed to cover $E(G)$, and let $\rpc(G)$ be the maximum of $\rpc(G,c)$ over all proper edge-colorings of $G$. We prove that, for every fixed $0<\alpha<1$, every properly edge-colored $n$-vertex graph with minimum degree at least $\alpha n$ satisfies $\rpc(G,c)\leq(1+o(1))n/2$, where the coefficient $1/2$ is best possible. We also determine $\rpc(G)$ asymptotically for every complete multipartite graph. If $G=K_{n_1,\ldots,n_r}$ has order $n$ and largest and smallest part sizes $M$ and $s$, respectively, then, uniformly over all choices of the number and sizes of the parts, $\rpc(G)=(1+o(1))\max\{\min\{\lfloor n/2\rfloor,n-M\},(n-s)/2\}$. The proof combines pseudorandom packings of globally rainbow linear forests with a decomposition into dense parts and prescribed avoidance for arbitrary dense graphs, and with reserved connectors and a direct dominant-part argument for complete multipartite graphs.
\end{abstract}

\maketitle

\section{Introduction}\label{sec:introduction}

Throughout the paper, all graphs are finite and simple. Let $G$ be equipped
with a proper edge-coloring $c$, so adjacent edges receive different
colors. A path is \textbf{rainbow} if its edges have pairwise distinct
colors. Write $\rpc(G,c)$ for the minimum number of rainbow simple paths
whose union contains $E(G)$, and put $\rpc(G):=\max_c\rpc(G,c)$, where
the maximum is taken over all proper edge-colorings of $G$. Paths in a
cover need not be edge-disjoint, and different paths may share vertices,
edges, and colors.

Path covering without the rainbow requirement goes back to Chung, who
conjectured that the edges of every connected $n$-vertex graph can be
covered by at most $\lceil n/2\rceil$ paths~\cite{Chung}. As in our
setting, the paths in such a cover need not be edge-disjoint. Pyber
proved an asymptotic version of Chung's conjecture~\cite{Pyber}, and Fan
subsequently proved the conjecture exactly~\cite{Fan}. Bonamy, Botler,
Dross, Naia, and Skokan asked whether a linear bound persists when every
path is required to be rainbow: does there exist an absolute constant
$C$ such that the edges of every properly edge-colored $n$-vertex graph
can be covered by at most $Cn$ rainbow paths?~\cite{BBDNS} A greedy
argument gives an $O(n\log n)$ bound for arbitrary graphs, but the
conjectured $O(n)$ bound remains open. It has, however, been established
for several models of random graphs~\cite{KMN,Fernandes,BKMMS}.

Dense graphs are a natural setting in which to seek a sharp form of this
conjecture. They were proposed in Problem~9 of the BIRS workshop report on
rainbow path coverings~\cite{Granet}; the GAPCOMB problem booklet also
records the general question as Conjecture~19 and, in Problem~20,
explicitly suggests graphs of large minimum degree as a class to
investigate~\cite{Naia}. The minimum-degree condition also has a natural
precedent in classical decomposition theory. Gir\~ao, Granet, K\"uhn, and
Osthus proved that every sufficiently large $n$-vertex graph of linear
minimum degree can be decomposed into at most $n/2+o(n)$
paths~\cite{GiraoGranetKuhnOsthus}. Their paths need not be rainbow and,
unlike the paths considered here, form an edge decomposition. However,
their result identifies $n/2$ as the natural first-order benchmark for
dense graphs. Our first theorem attains the same first-order bound for
rainbow path covers whenever the minimum degree is a fixed positive
fraction of the order. The density hypothesis is essential for this
sharper estimate: the construction in \eqref{eq:k6-general-lower} shows
that arbitrary properly edge-colored graphs may require $5n/6-O(1)$
rainbow paths. Thus a universal $n/2+o(n)$ rainbow-cover bound is false,
even though the general $O(n)$ conjecture remains open. In the dense
setting, the two different problems---uncolored path decomposition and
rainbow path covering---nevertheless have the same first-order bound.

The search for individual long rainbow paths in properly colored
complete graphs has a separate history. Andersen conjectured that every
properly colored $K_n$ contains a rainbow path on $n-1$
vertices~\cite{Andersen}. Alon, Pokrovskiy, and Sudakov, and later Balogh
and Molla, obtained successively stronger almost-spanning
versions~\cite{AlonPokrovskiySudakov,BaloghMolla}. Bowtell, Montgomery,
M\"uyesser, and Pokrovskiy recently proved Andersen's conjecture for all
sufficiently large $n$~\cite{BowtellMontgomeryMuyesserPokrovskiy}. These
results concern the existence of a single long rainbow path. A path cover
is a global object: even for $K_n$, it must account for all
$\Theta(n^2)$ edges, and one long rainbow path does not by itself provide
a way to cover the remaining edges.

Another related direction seeks many edge-disjoint rainbow spanning
structures. Pokrovskiy and Sudakov found linearly many rainbow spanning
trees in every properly colored complete graph~\cite{PokrovskiySudakov},
and Montgomery, Pokrovskiy, and Sudakov later obtained an asymptotic
decomposition into rainbow spanning
trees~\cite{MontgomeryPokrovskiySudakov}. Paths are considerably more
rigid than trees, since they have maximum degree two. Moreover, our aim
is to cover every edge of an arbitrary dense graph. The following theorem
gives the asymptotically best possible universal bound in this setting.

\begin{theorem}\label{thm:dense-main}
For every $0<\alpha<1$ and every $\varepsilon>0$, there exists
$n_0=n_0(\alpha,\varepsilon)$ such that the following holds. If $G$ is
a graph on $n\geq n_0$ vertices with $\delta(G)\geq\alpha n$, then
every proper edge-coloring $c$ of $G$ satisfies
\begin{equation}\label{eq:dense-main-upper}
 \rpc(G,c)\leq(1+\varepsilon)\frac n2.
\end{equation}
\end{theorem}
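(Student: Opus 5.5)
We would prove Theorem~\ref{thm:dense-main} by covering almost all edges with roughly $n/2$ long rainbow paths, plus a sublinear number of extra paths for the leftover edges. The counting heuristic fixes the target. A graph with $\delta(G)\ge\alpha n$ has at most $n^2/2$ edges, and a rainbow path has at most $n-1$ edges. A cover by about $n/2$ paths therefore needs the paths to be almost Hamiltonian and almost edge-disjoint whenever $G$ is nearly complete. For sparser $G$, the natural structure is a \emph{linear forest}: a union of vertex-disjoint paths, later joined into a single path. The plan is to decompose almost all of $E(G)$ into $(1+\varepsilon/4)n/2$ globally rainbow linear forests, each with $o(n)$ components, and then link the components of each forest into one rainbow path using reserved connector edges.

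The first step is a decomposition into dense parts via Szemerédi's regularity lemma. Applied to $G$ with parameter $\eta\ll\varepsilon,\alpha$, it gives a reduced graph $R$ on $k$ clusters with minimum degree roughly $\alpha k$. We then take a fractional decomposition of $R$, or of a suitable blow-up, into matchings and paths. The aim is that every vertex of $G$ lies in the support of at most $(1+\varepsilon/8)\deg_G(v)/2$ path-ends-weighted pieces. Concretely, we want a family of about $n/2$ linear forests $F_1,\dots,F_m$ in $G$ with three properties:
\begin{itemize}
\item each vertex $v$ has degree $2$ in almost every $F_i$ in which it appears;
\item each vertex $v$ appears in about $\deg(v)$ of the forests in total;
\item every edge of $G$, apart from an $\varepsilon n^2$-sized remainder, is used.
\end{itemize}
We find the forests by a random-greedy or nibble procedure inside each regular pair, following the standard Pippenger--Spencer / Rödl nibble for hypergraph matchings. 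The hypergraph encodes edges together with their colors, so that each forest is \emph{rainbow}: no color repeats within a forest. Since $c$ is proper, each color class is a matching of size at most $n/2$. The conflict hypergraph therefore has bounded codegrees relative to degrees, and a pseudorandom packing result with conflicts (in the style of Glock--Joos--Kim--Kühn--Lichev, or Ehard--Glock--Joos) produces nearly perfect packings that avoid monochromatic pairs within a forest.

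The second step handles connectors and leftovers. Before the packing, we set aside a random sparse subgraph $G_0\subseteq G$ of density $\gamma$ with $\eta\ll\gamma\ll\varepsilon$. By the minimum degree condition, any two vertices have many common neighbours in $G_0$, and there are many short paths between them. For each forest $F_i$, we join its $o(n)$ components in sequence by paths of length at most $2$ in $G_0$. These connectors must avoid the colors of $F_i$ and the vertices of $F_i$; this is the \emph{prescribed avoidance} requirement. Each connector has about $\gamma^2 n$ candidates, and only $O(1)$ of them are forbidden per color or vertex. A greedy or Lovász Local Lemma argument then spreads the load, so no edge of $G_0$ is overused beyond what can be tolerated. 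The resulting paths $P_i$ are rainbow. The uncovered edges are the packing remainder together with the edges of $G_0$, a graph of maximum degree $O(\gamma n)$. These are covered by a further $O(\gamma n)+o(n)$ rainbow paths. To do this, we decompose the remainder into $O(\gamma n)$ matchings, split each matching into rainbow pieces (a matching is always rainbow), and link the pieces greedily through the still-abundant dense structure. Choosing $\gamma\le\varepsilon/8$ gives the total $(1+\varepsilon)n/2$.

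The main obstacle is the first step: packing rainbow linear forests while keeping the number of forests at $(1+o(1))n/2$ rather than $(1+o(1))\Delta/2$ per region. Vertices of differing degrees must be saturated consistently. A vertex of degree $d$ appears in about $d/2$ forests at internal degree $2$. If $d<n$, it is simply absent from the remaining forests, which is harmless because forests need not be spanning. The real difficulty is global consistency: the number of forests is uniform, but the reduced graph is irregular. We would first try to regularize by taking a fractional $K_2$-decomposition of the weighted reduced graph into ``path systems'', using the fact that any graph's edges admit a fractional cover by $\Delta/2+o(n)$ linear forests (the linear arboricity asymptotics of Alon). We would then transfer this to $G$ via the blow-up and the rainbow nibble, at each step checking that the color constraints cost only a $(1+o(1))$ factor.
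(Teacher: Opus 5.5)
Your outline has the right overall shape, but two of its steps fail as stated, and repairing them is most of the paper's work.

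\textbf{First gap: connectivity.} You claim that the minimum-degree condition gives any two vertices many common neighbours in $G_0$ and many short paths between them. This is false for $\alpha\le1/2$. In $K_{n/2,n/2}$, vertices on opposite sides have no common neighbour. For $\alpha<1/2$, the graph may be two disjoint copies of $K_{n/2}$, or two cliques joined by few edges. A forest with components on both sides of such a cut cannot be joined into one path, and the same objection applies to your final linking of residual matchings ``through the still-abundant dense structure''.

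The paper handles this as follows:
\begin{itemize}
\item It takes a nested cut decomposition into boundedly many parts in which every large cut is dense (Lemma~\ref{lem:nested-robust-decomposition}).
\item It assigns about $(1+\eta)m_j/2$ forests to each part $H_j$; these counts add up to about $n/2$.
\item It joins components by rainbow connectors of bounded, but not necessarily two-edge, length (Lemma~\ref{lem:robust-connections}).
\item The cross edges cannot simply be sent to the residual, since they may be concentrated at a few vertices or in a few colors. These concentrations are isolated as $X$ and $\Gamma$. The corresponding edges are threaded into ordered lists of pieces attached to the forest indices (Lemma~\ref{lem:exceptional-sequences}).
\item The packing is done with prescribed forbidden sets, so that these pieces can be spliced into the paths (Proposition~\ref{lem:forbidden-packing}).
\item The residual is covered via Lemma~\ref{lem:forest-extension}, which needs no connectivity at all.
\end{itemize}

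\textbf{Second gap: the connector count.} To cover about $n^2/2$ edges of $K_n$ with $(1+\varepsilon/4)n/2$ forests, a typical $F_i$ must miss only about $\varepsilon n/4$ vertices. For a one-factorization, it also misses only about $\varepsilon n/4$ colors. A connector $xzy$ must avoid $V(F_i)$ and $\col(F_i)$. So there are $\Theta(n)$ exclusions, each killing $O(1)$ candidates, against only about $\gamma^2 n$ candidates. The greedy step therefore fails. Nothing in a nibble makes $V(F_i)$ and $\col(F_i)$ pseudorandom relative to $G_0$.

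The missing idea is \emph{private} reserves. For each index $i$, random sets $W_i$ of vertices and $R_i$ of colors are sampled \emph{before} the packing. Only paths avoiding them are admissible for $i$ in the auxiliary hypergraph. This costs only factors $1/\phi$ and $1/\theta$, close to $1$, in the indexed degrees, and these are absorbed by the slack in $q$. The connectors then use only $W_i$ and $R_i$, which are disjoint from $F_i$ by construction.

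\textbf{Smaller error in the residual step.} A matching is not automatically rainbow: each color class is itself a matching. So the packing must also leave the residual with color multiplicity $o(n)$, not just small maximum degree. The paper enforces this with the color weights $\omega_\gamma$ in Theorem~\ref{thm:egj}.

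Finally, your worry about irregular degrees does not require regularity. The stationary non-backtracking walk gives uniform edge marginals in any graph with $\delta(G)\ge\alpha n$.
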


The coefficient $1/2$ is necessary already for complete graphs. More
generally, two elementary obstructions give the natural scale for the
multipartite problem. Let $\nu(G)$ be the matching number of $G$, and let
$\Delta(G)$ be its maximum degree. A simple path contains at most two
edges incident with a fixed vertex, so every path cover has at least
$\lceil\Delta(G)/2\rceil$ paths. For the second obstruction, let $N$ be
a maximum matching. Give all edges of $N$ one color and give every edge
outside $N$ its own new color. This coloring is proper because the edges
of $N$ are pairwise disjoint. Every rainbow path contains at most one edge
of $N$, so this coloring requires at least $|N|=\nu(G)$ paths. Hence
\begin{equation}\label{eq:universal-lower}
 \rpc(G)\geq\max\{\nu(G),\lceil\Delta(G)/2\rceil\}.
\end{equation}
Our second result shows that these two obstructions are asymptotically
sharp for every complete multipartite graph. Let
\begin{equation}
 G=K_{n_1,\ldots,n_r},\qquad
 n=\sum_{i=1}^r n_i,\qquad
 M=\max_i n_i,\qquad s=\min_i n_i.
\end{equation}
A vertex in a part of size $t$ has degree $n-t$, so
$\Delta(G)=n-s$. Moreover,
$\nu(G)=\min\{\lfloor n/2\rfloor,n-M\}$. Indeed, both terms are
upper bounds. If $M>n/2$, match every vertex outside a largest part to
a distinct vertex in that part. If $M\leq n/2$, repeatedly match
vertices from two largest non-empty parts. After each step, either at
most one vertex remains or no part contains more than half of the
remaining vertices. Hence the process produces a matching of size
$\lfloor n/2\rfloor$.

\begin{theorem}\label{thm:multipartite-main}
For every $\varepsilon>0$, there exists $n_0=n_0(\varepsilon)$ such
that the following holds. Let $G=K_{n_1,\ldots,n_r}$ have at least two
non-empty parts and order $n\geq n_0$. Then every proper edge-coloring
$c$ of $G$ satisfies
\begin{equation}\label{eq:multipartite-upper}
 \rpc(G,c)\leq(1+\varepsilon)
 \max\{\nu(G),\Delta(G)/2\}.
\end{equation}
Consequently, uniformly over all complete multipartite graphs of order
$n$,
\begin{equation}\label{eq:multipartite-asymptotic}
 \rpc(G)=(1+o(1))
 \max\{\min\{\lfloor n/2\rfloor,n-M\},(n-s)/2\}.
\end{equation}
\end{theorem}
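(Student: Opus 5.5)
The lower bound in \eqref{eq:multipartite-asymptotic} is \eqref{eq:universal-lower}, together with the formulas $\nu(G)=\min\{\lfloor n/2\rfloor,n-M\}$ and $\Delta(G)=n-s$. So everything reduces to the upper bound \eqref{eq:multipartite-upper}. Throughout, write $L:=\max\{\nu(G),\Delta(G)/2\}$. We argue by cases on the shape of the part sizes, with a small parameter $\eta=\eta(\varepsilon)$.

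\textbf{Case 1: $M\le(1-\eta)n$ and $s\ge\eta n$ are not both needed; we only use $M\le(1-\eta)n$.} If every vertex outside a largest part is "dense", meaning $\delta(G)=n-M\ge\eta n$, then Theorem~\ref{thm:dense-main} applies with $\alpha=\eta$. It gives $\rpc(G,c)\le(1+\varepsilon)n/2$. When $M\le n/2$ we have $\nu(G)=\lfloor n/2\rfloor$, so this bound is already $(1+o(1))L$. The remaining dense subcase is $n/2<M\le(1-\eta)n$, where $L=\max\{n-M,(n-s)/2\}$ may be smaller than $n/2$. Here I would not use Theorem~\ref{thm:dense-main} as a black box. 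Instead I would rerun its strategy, packing pseudorandom globally rainbow linear forests, on the graph $G$ with the largest part $V_1$ singled out.

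Every edge meets $V\setminus V_1$ or lies inside $G-V_1$. The set $V\setminus V_1$ has size $n-M$, and each vertex of $V_1$ has degree $n-M$. A rainbow linear forest in which every path alternates as far as possible between $V_1$ and $V\setminus V_1$ uses about $2(n-M)$ edges. We cover $G[V_1,V\setminus V_1]$ by about $(n-M)/2$ such forests in an approximately regular fashion. Each forest has $o(n)$ components, and we join them into paths using reserved connector edges kept aside in advance. This gives roughly $n-M$ paths, up to a $(1+\varepsilon)$ factor.

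Next comes $G-V_1$, which is again complete multipartite. It has order $n-M$ and maximum degree at most $n-M$. It is covered by at most about $(n-M)/2+o(n)$ further paths. These should be absorbed by letting the forest paths continue into $G-V_1$; if that fails, we simply add them and check that the bound $(n-M)+(n-M)/2$ is still fine. It is not automatically fine, so absorption is needed. The right accounting is that every vertex of $V\setminus V_1$ is an interior vertex of about $(n-s)/2\le L$ paths.

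\textbf{Case 2: $M\ge(1-\eta)n$ (the dominant-part case).} Now $L\ge n-M=:m$ is small compared with $n$, and $G$ is close to $K_{M,m}$ plus the small multipartite graph $H=G-V_1$. We give a direct argument. For each vertex $u\in V\setminus V_1$, its $M$ edges to $V_1$ receive distinct colors. We pair up the vertices of $V\setminus V_1$ and cover $G[V_1,V\setminus V_1]$ by paths of the form $\dots v\,u\,v'\,u'\,v''\dots$ that alternate. In each step we choose the next $V_1$-vertex greedily, avoiding the at most $O(m)$ colors already used on the current path. This keeps each path rainbow, since a path has length at most $2m+1$ and $M\gg m$.

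A counting and Hall-type argument then shows that $(1+\varepsilon)\max\{m,\ldots\}$ paths suffice. Each path passes each vertex of $V\setminus V_1$ at most once and consumes two of its edges. The edges of $H$ (at most $\binom m2$) are covered by at most $m/2+o(m)$ additional paths, using the general $O(n\log n)$ greedy bound only when $m$ is tiny, or inserted into the alternating paths.

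Uniformity over $r$ and the part sizes comes from the fact that only the parameters $\eta,M,s$ enter each case. The main obstacle is the intermediate regime of Case 1, $n/2<M<(1-\eta)n$ with very unbalanced small parts. There we must beat the naive $n/2$ bound and attain $\max\{n-M,(n-s)/2\}$ exactly to first order. My first attempt would be the vertex-degree-budget accounting above, implemented by a random greedy packing of rainbow paths. In that packing each vertex of degree $d$ is used as an interior vertex about $d/2$ times, and the endpoints are tuned so that the number of paths is dictated by the largest such load, which is $\max\{n-M,(n-s)/2\}$.
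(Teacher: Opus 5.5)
Your reduction to the upper bound and your treatment of the subcase $M\le n/2$ via Theorem~\ref{thm:dense-main} are fine. The gap is the regime $n/2<M\le(1-\eta)n$. You call it the main obstacle yourself and give only a heuristic, which does not work as stated. Every rainbow linear forest in $G[V_1,V\setminus V_1]$ has at most $2(n-M)$ edges, because each edge meets $V\setminus V_1$. This bipartite graph has $M(n-M)$ edges, so it needs about $M/2$ forests, not $(n-M)/2$. The absorption of $E(G-V_1)$ into those forests is asserted without any mechanism. More importantly, your final accounting balances only vertex loads, which gives $\max_v d(v)/2=(n-s)/2$ and never sees the color obstruction. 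In this regime $\nu(G)=n-M$ can strictly exceed $(n-s)/2$. For example, with $M=0.55n$ and two further parts of size $0.225n$, one has $n-M=0.45n>0.3875n=(n-s)/2$. Putting a maximum matching into one color class then already forces $0.45n$ paths. A random greedy packing tuned only to vertex degrees therefore cannot certify $(1+\varepsilon)Q(G)$; it must also control how each color class is spread over the paths.

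The missing idea is that when $\delta(G)\ge\alpha n$ no part needs to be singled out. The paper runs the Ehard--Glock--Joos forest packing of Lemmas~\ref{lem:regularisation} and~\ref{lem:forest-packing} on all of $G$ with $q=\lceil(1+\eta/3)Q(G)\rceil$ indices instead of about $n/2$. Up to factors $1+O(\sigma+\tau+1/\ell)$, the record $V_{i,v}$ has degree $d(v)/(2q)$ times the common edge-record degree, and $C_{i,\gamma}$ has degree $|E_\gamma|/q$ times it. So the edge records are the vertices of maximum degree, and the matching of Theorem~\ref{thm:egj} covers almost all host edges, as soon as $q$ exceeds both $\Delta(G)/2$ and $\mu_c(G)$ by a constant factor. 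Since color classes are matchings, $\mu_c(G)\le\nu(G)$; this is exactly where both terms of $Q(G)$ enter. The complete multipartite structure then gives two- or three-edge connectors from private random reserves (Lemma~\ref{lem:connector-reserves}), joining each forest into one rainbow path. The residual graph has maximum degree and color multiplicity $o(n)$, so it splits into $o(n)$ rainbow matchings of size at most $\alpha n/100$, each extended to a rainbow path by Lemma~\ref{lem:matching-extension}. This is Theorem~\ref{thm:linear-minimum-degree}(ii), and it treats $M\le n/2$ and $M>n/2$ uniformly.

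Your Case~2 is close to Lemma~\ref{lem:dominant}, but greedy choices along each path do not guarantee that every edge of $G[V_1,V\setminus V_1]$ is used with only $p=\lfloor M/2\rfloor$ paths. The paper builds all paths position by position. At each position it takes the bijection onto one half of $V_1$ as a perfect matching in an auxiliary bipartite graph of minimum degree at least $p-5m\ge p/2$. The right comparison is $\lceil M/2\rceil+O(m)$ against $Q(G)\ge\Delta(G)/2\ge M/2$. The edges of $G-V_1$ cost only $O(m)$ further paths, via rainbow matchings linked through fresh vertices of $V_1$, so no $O(n\log n)$ bound is needed.
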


For example, Theorems~\ref{thm:dense-main} and~\ref{thm:multipartite-main}
both give $\rpc(K_n)=(1/2+o(1))n$, while
Theorem~\ref{thm:multipartite-main} gives
$\rpc(K_{n,n})=(1+o(1))n$.

For general graphs, larger lower bounds arise from small components.
A disjoint union of properly colored copies of $K_3$ requires two
rainbow paths per component and therefore gives $2n/3$ paths. There is
also a proper edge-coloring $c$ of $K_6$, obtained from a
one-factorization, for which four rainbow paths do not cover all the
edges but five do, so $\rpc(K_6,c)=5$. Disjoint unions of copies of this
colored $K_6$, together with isolated vertices, therefore give
\begin{equation}\label{eq:k6-general-lower}
 \rpc(G,c)\geq5\left\lfloor\frac n6\right\rfloor
 =\frac56n-O(1).
\end{equation}
No larger asymptotic coefficient is known to us.

The multipartite result is uniform even when the minimum degree is
sublinear, so it is not a consequence of Theorem~\ref{thm:dense-main}.
It therefore reaches well beyond the dense regime. It shows that the
matching-number and maximum-degree obstructions are the only asymptotic
obstructions throughout the full class of complete multipartite graphs,
independently of the number and relative sizes of the parts.

Both upper bounds use the same path-packing principle. A stationary
non-backtracking walk gives a weighted family of short rainbow paths
with almost uniform edge marginals and small pairwise marginals. The
weighted matching theorem of Ehard, Glock, and Joos then packs most
edges into globally rainbow linear forests. For arbitrary dense graphs,
we first decompose the graph into a bounded number of dense parts and
organize the sparse interface into ordered lists of pieces. A packing
lemma with prescribed forbidden sets chooses the internal forests while
keeping private vertex and color reserves disjoint from both the forests
and the prescribed pieces. These reserves are then used to join each list
to its forest. For complete multipartite graphs, the part structure gives
shorter connectors and the sharper bound in terms of the matching number
and maximum degree.

Thus, the proof for arbitrary dense graphs may be viewed as a coarse
regularity-style argument, with carefully chosen reserves of both
vertices and colors.

The framework may be useful in other rainbow decomposition problems.
The stationary walk supplies a pseudorandom weighted family of short
rainbow paths in an arbitrary dense graph, and the indexed hypergraph
matching converts it into many globally rainbow linear forests while
allowing different forbidden vertices and colors for different forests.
Together with the decomposition into robust parts, this permits the
exceptional edges to be prescribed before the main packing is chosen.
This separation between the exceptional edges and the bulk packing is
suited to problems in which many rainbow structures must be constructed
simultaneously, rather than one long rainbow structure.

Section~\ref{sec:preliminaries} records the concentration inequalities,
the stationary-walk estimates, and the Ehard--Glock--Joos theorem. The
main result of
Section~\ref{sec:linear-minimum-degree} is the two-part
Theorem~\ref{thm:linear-minimum-degree}. Part~(i) is exactly
Theorem~\ref{thm:dense-main}, while part~(ii) proves the stronger
complete multipartite estimate under the same linear minimum-degree
hypothesis. Subsection~\ref{subsec:forest-packing} gives the basic
rainbow linear-forest packing mechanism. Subsection~\ref{subsec:dense-multipartite}
gives the multipartite completion and proves part~(ii).
Subsection~\ref{subsec:robust-tools} develops the packing with forbidden
sets. Subsection~\ref{subsec:arbitrary-dense} then gives the decomposition
into dense parts and proves part~(i). Finally,
Section~\ref{sec:complete-multipartite} treats the dominant-part case
and completes the proof of Theorem~\ref{thm:multipartite-main}.

\section{Preliminaries}\label{sec:preliminaries}

This section collects the probabilistic tools used throughout the paper.
We first record the standard concentration inequalities used in the random
constructions; we refer to~\cite{AlonSpencer} for these inequalities and
their usual variants. We then establish estimates for a stationary
non-backtracking walk in a properly edge-colored dense graph and state the
weighted pseudorandom matching theorem of Ehard, Glock, and Joos.

For a subgraph or walk $F$ of $G$, let $\col(F)$ be the set of colors
on its edges, and abbreviate $d_G(v)$ to $d(v)$. For a color $\gamma$,
write $E_\gamma=\{e\in E(G):c(e)=\gamma\}$, and let
$V(E_\gamma)$ be the set of vertices incident with an edge of
$E_\gamma$. For a subgraph $F\subseteq G$, put
$\mu_c(F)=\max_\gamma|E_\gamma\cap E(F)|$; in particular,
$\mu_c(G)=\max_\gamma|E_\gamma|$. Every color class is a matching, so
$\mu_c(G)\leq\nu(G)$.

For a hypergraph $\cH$, let $\Delta(\cH)$ be its maximum degree and let
\begin{equation}
 \Delta_2(\cH):=\max_{x\neq y}
 |\{f\in E(\cH):x,y\in f\}|
\end{equation}
be its maximum codegree.

\subsection{Concentration inequalities}\label{subsec:concentration}

The following standard inequalities, as well as the variants used below,
can be found in~\cite{AlonSpencer}. Throughout this subsection,
$c_{\rm prob}>0$ denotes an absolute constant. Chernoff's inequality
gives the following two bounds.
If $X$ is a sum of independent Bernoulli variables
with mean $\mu$, then, for $0<\rho\leq1$,
\begin{equation}\label{eq:chernoff-standard}
 \Pp(|X-\mu|>\rho\mu)
 \leq 2\exp(-c_{\rm prob}\rho^2\mu).
\end{equation}
We also use the general upper-tail form
\begin{equation}\label{eq:chernoff-upper}
 \Pp(X\geq\mu+s)
 \leq \exp\left(-c_{\rm prob}\min\left\{\frac{s^2}{\mu},s\right\}\right)
 \qquad(s>0).
\end{equation}
If $X_1,\ldots,X_N$ are independent and $a_i\leq X_i\leq b_i$, then
Hoeffding's inequality gives, for every $s>0$,
\begin{equation}\label{eq:hoeffding-standard}
 \Pp\left(\left|\sum_{i=1}^N X_i-
 \E\sum_{i=1}^N X_i\right|>s\right)
 \leq 2\exp\left(-\frac{2s^2}
 {\sum_{i=1}^N(b_i-a_i)^2}\right).
\end{equation}
If $Z$ is a function of independent variables and changing the $i$th
variable changes $Z$ by at most $c_i$, then the bounded-differences
inequality gives
\begin{equation}\label{eq:mcdiarmid-standard}
 \Pp(|Z-\E Z|>s)
 \leq 2\exp\left(-\frac{2s^2}{\sum_i c_i^2}\right).
\end{equation}

We also use the following lower-tail form of Janson's inequality. Let
$\{\xi_u:u\in\Omega\}$ be independent Bernoulli variables, and let
$(A_i)_{i\in I}$ be a finite indexed family of subsets of $\Omega$;
repetitions are allowed. Put
$I_i=\prod_{u\in A_i}\xi_u$, $X=\sum_{i\in I}I_i$, and
$\mu=\E X$. Write $i\sim j$ if $i\neq j$ and
$A_i\cap A_j\neq\varnothing$, and put
\begin{equation}\label{eq:janson-dependency-sum}
 \Delta=\sum_{\{i,j\}:i\sim j}\E(I_iI_j).
\end{equation}
Then, for $0\leq s\leq\mu$,
\begin{equation}\label{eq:janson-standard}
 \Pp(X\leq\mu-s)
 \leq \exp\left(-\frac{c_{\rm prob}s^2}{\mu+\Delta}\right).
\end{equation}

We shall use these inequalities only at simple scales. If $N=O(n)$,
the variables $X_1,\ldots,X_N$ are independent,
$0\leq X_i\leq Cn^{-2}$, and $\E\sum_iX_i=\Theta(n^{-1})$, then
\eqref{eq:hoeffding-standard}, with
$s=\rho\E\sum_iX_i$, gives failure probability
$\exp(-\Omega(\rho^2n))$. A Bernoulli sum of mean $\mu$ has relative
error at most $\rho$ with failure probability
$\exp(-\Omega(\rho^2\mu))$ by \eqref{eq:chernoff-standard}. In each
application of \eqref{eq:mcdiarmid-standard}, we compute
$\sum_i c_i^2$ explicitly. Finally, if $h\geq2$ is fixed,
$\mu=\Theta(n^{h-1})$ and $\Delta=O(n^{2h-3})$, then
\eqref{eq:janson-standard}, with $s=\mu/2$, gives failure probability
$\exp(-\Omega(n))$.

\subsection{Stationary rainbow paths}\label{subsec:stationary}
Fix $\alpha>0$ and an integer $\ell\geq3$, both independent of $n$.
All asymptotic estimates in this subsection are taken as $n\to\infty$
with $\alpha$ and $\ell$ fixed.

Let $(G,c)$ be a properly
edge-colored graph on $n$ vertices with $\delta(G)\geq\alpha n$, and
put $m=e(G)$. Write
$\vec E(G)=\{(u,v):uv\in E(G)\}$. Thus each edge $uv$ gives the two
possible moves $(u,v)$ and $(v,u)$. Choose
$(X_0,X_1)$ uniformly from $\vec E(G)$. For
$j=1,\ldots,\ell-1$, having reached $X_j$ from $X_{j-1}$, choose
$X_{j+1}$ uniformly from $N(X_j)\setminus\{X_{j-1}\}$. This defines
the \textbf{non-backtracking walk} $\mathbf X=(X_0,\ldots,X_\ell)$.

Put $Y_j=(X_{j-1},X_j)$ for $j=1,\ldots,\ell$. A distribution $\pi$
on $\vec E(G)$ is \textbf{stationary} if $Y_j\sim\pi$ implies
$Y_{j+1}\sim\pi$. Thus, if $Y_1$ has a stationary distribution, then
every $Y_j$ has the same distribution. The next lemma shows that the
uniform distribution on $\vec E(G)$ is stationary and records the
estimates that we need.

\begin{lemma}[Stationary path estimates]\label{lem:stationary}
For every $j=1,\ldots,\ell$ and every $(u,v)\in\vec E(G)$,
\begin{equation}\label{eq:stationary-directed-edge}
 \Pp(Y_j=(u,v))=\frac1{2m}.
\end{equation}
In particular, the edge traversed at every position is uniform in $E(G)$,
and $\Pp(X_j=v)=d(v)/(2m)$ for every $v\in V(G)$ and
$j=0,\ldots,\ell$.

The probability that $\mathbf X$ is not a simple rainbow path is $O(n^{-1})$.
Let $\cP$ be the set of unoriented simple rainbow $\ell$-edge paths,
and assign to each $P\in\cP$ the weight
\begin{equation}
 w(P):=\Pp(\mathbf X\text{ traverses }P\text{ in either direction}).
\end{equation}
For $P\in\cP$ and $v\in V(G)$, let $d_P(v)$ denote the number of edges
of $P$ incident with $v$. Thus $d_P(v)=0$ if $v\notin V(P)$,
$d_P(v)=1$ if $v$ is an endpoint of $P$, and $d_P(v)=2$ if $v$ is an
internal vertex of $P$. Uniformly over
$e\in E(G)$, $v\in V(G)$, and all colors $\gamma$,
\begin{align}
 \sum_{\substack{P\in\cP\\e\in E(P)}}w(P)
  &=(1+o(1))\frac{\ell}{m},\label{eq:edge-marginal}\\
 \sum_{\substack{P\in\cP\\v\in V(P)}}w(P)
  &=(1+o(1))\frac{(\ell+1)d(v)}{2m},
  \label{eq:vertex-marginal}\\
 \sum_{P\in\cP}d_P(v)w(P)
  &=(1+o(1))\frac{\ell d(v)}{m},\label{eq:incidence-marginal}\\
 \sum_{\substack{P\in\cP\\\gamma\in\col(P)}}w(P)
  &=(1+o(1))\frac{\ell|E_\gamma|}{m}.
  \label{eq:color-marginal}
\end{align}
Moreover, there are constants $c_0=c_0(\alpha,\ell)>0$ and
$C=C(\alpha,\ell)>0$ such that
\begin{equation}\label{eq:path-weight-bound}
 c_0n^{-\ell-1}\leq w(P)\leq Cn^{-\ell-1}
\end{equation}
for every $P\in\cP$. Thus
$w(P)=\Theta_{\alpha,\ell}(n^{-\ell-1})$ uniformly over $P\in\cP$.
If $u\neq v$ and $\gamma\neq\gamma'$, then
\begin{align}
 \Pp(u,v\in V(\mathbf X))
  &=O(n^{-2}),\label{eq:two-vertices}\\
 \Pp(v\in V(\mathbf X),\gamma\in\col(\mathbf X))
  &=O\biggl(\frac{\mathbf1_{\{v\in V(E_\gamma)\}}}{n^2}
       +\frac{|E_\gamma|}{n^3}\biggr),
       \label{eq:vertex-color}\\
 \Pp(\gamma,\gamma'\in\col(\mathbf X))
  &=O\biggl(\frac{\min\{|E_\gamma|,|E_{\gamma'}|\}}{n^3}\biggr).
  \label{eq:two-colors}
\end{align}
All error terms are uniform, with constants depending only on
$\alpha$ and $\ell$.
\end{lemma}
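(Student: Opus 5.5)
The plan is to first prove stationarity by a direct one-step computation, and then obtain everything else by union bounds over the positions of the walk. For stationarity, suppose $Y_j$ is uniform on $\vec E(G)$. Then
\[
\Pp(Y_{j+1}=(u,v))=\sum_{x\in N(u)\setminus\{v\}}\frac1{2m}\cdot\frac1{d(u)-1}=\frac1{2m},
\]
because each of the $d(u)-1$ predecessors $x$ contributes equally. Here $d(u)\geq\alpha n\geq2$, so the step is well defined. Induction from $Y_1$ gives \eqref{eq:stationary-directed-edge}, and summing over $u$ gives $\Pp(X_j=v)=d(v)/(2m)$.

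Next I will show that $\mathbf X$ fails to be a simple rainbow path with probability $O(n^{-1})$. A repetition $X_i=X_k$ with $k\geq i+2$ requires, conditionally on the walk up to time $k-1$, a specific choice among at least $\alpha n-1$ options, so it has probability $O(1/n)$. A color repetition $c(Y_i)=c(Y_k)$ with $k>i$ similarly requires $X_k$ to be the unique $c(Y_i)$-neighbour of $X_{k-1}$, again with probability $O(1/n)$. Union bounding over the $O(\ell^2)$ pairs of positions finishes this step.

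For the marginals I will use the fact that each transition probability lies between $1/n$ and $1/(\alpha n-1)$, and that $2m=\Theta(n^2)$. This gives $\Pp(\mathbf X=\text{given walk})=\Theta(n^{-\ell-1})$, and at most two orientations contribute to $w(P)$, which yields \eqref{eq:path-weight-bound}.

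For \eqref{eq:edge-marginal}, the expected number of positions at which $e$ is traversed is $\ell/m$ by stationarity. From this I subtract the contribution of non-path walks that contain $e$. Conditioning on $e$ occurring at position $j$, the failure event costs another factor $O(1/n)$ by the same argument, since the remaining steps are still "free" choices of size $\Theta(n)$. The steps before position $j$ are handled by reversibility: the reversed non-backtracking walk with uniform start has the same law. So the correction is $O(n^{-3})=o(\ell/m)$.

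The same decomposition handles \eqref{eq:incidence-marginal}, via $\sum_j\Pp(v\in Y_j)=\ell d(v)/m$, and \eqref{eq:color-marginal}, via $\ell|E_\gamma|/m$. For \eqref{eq:vertex-marginal}, the quantity $\sum_{j=0}^{\ell}\Pp(X_j=v)=(\ell+1)d(v)/(2m)$ counts visits, which equal vertex membership on simple paths. In each case the non-path correction is smaller by a factor $O(1/n)$. For the colour marginal this needs a little care when $|E_\gamma|$ is small: the correction is $O(|E_\gamma|/n^3)$, which is fine since $m=\Theta(n^2)$.

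The joint bounds \eqref{eq:two-vertices}--\eqref{eq:two-colors} come from union bounds over pairs of positions $(i,k)$.
\begin{itemize}
\item[--] For \eqref{eq:two-vertices}: $\Pp(X_i=u)=O(1/n)$, and conditionally on this, $\Pp(X_k=v)=O(1/n)$ for $k\neq i$, because the step into $X_k$ is a choice among $\Theta(n)$ options.
\item[--] For \eqref{eq:vertex-color}: if $v$ is an endpoint of the $\gamma$-edge at the matching position, then the probability is $O(n^{-2})$, and this requires $v\in V(E_\gamma)$. Otherwise $v$ and an edge of colour $\gamma$ occupy different positions. The probability of using a $\gamma$-edge at a fixed position is $O(|E_\gamma|/n^2)$, and the extra constraint of visiting $v$ elsewhere costs $O(1/n)$.
\item[--] For \eqref{eq:two-colors}: without loss of generality $|E_\gamma|\leq|E_{\gamma'}|$. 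Using a $\gamma$-edge at position $i$ costs $O(|E_\gamma|/n^2)$. Using a $\gamma'$-edge at some other position $k$ then costs $O(1/n)$, since, conditionally on everything before, the step at position $k$ is forced to be the unique $\gamma'$-edge at $X_{k-1}$. If $k<i$, I run the argument in reverse time using reversibility.
\end{itemize}

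I expect the main obstacle to be this conditioning when the constrained position precedes the one being conditioned on. Reversibility of the stationary non-backtracking walk handles it cleanly, but it should be verified explicitly: the reversed walk has the same uniform start distribution and the same transition rule.
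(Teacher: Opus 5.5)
Your proposal is correct and follows essentially the same route as the paper. The shared steps are the one-step stationarity computation, union bounds over positions in which each step is a uniform choice among at least $\alpha n-1$ vertices (with properness limiting each used colour to one excluded neighbour), the marginals obtained by summing stationary position probabilities and subtracting an $O(n^{-1})$ conditional failure correction, and the joint bounds from conditioning on one position and exposing the walk forward, or backward via the reversed walk. Your reversibility claim holds because a non-backtracking walk $v_0\cdots v_\ell$ has probability $\frac1{2m}\prod_{j=1}^{\ell-1}(d(v_j)-1)^{-1}$, which depends only on its internal vertices; the paper uses this same identity to write $w(P)=\frac1m\prod_{j=1}^{\ell-1}\frac1{d(v_j)-1}$ exactly, where you only bound the transition probabilities for \eqref{eq:path-weight-bound}.
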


\begin{proof}
Fix $(v,w)\in\vec E(G)$. Its possible predecessors are the directed
edges $(u,v)$ with $u\in N(v)\setminus\{w\}$. If $Y_j$ is uniform on
$\vec E(G)$, then
\begin{equation}
 \Pp(Y_{j+1}=(v,w))
 =\sum_{u\in N(v)\setminus\{w\}}
   \frac1{2m}\frac1{d(v)-1}
 =\frac1{2m}.
\end{equation}
Thus the uniform distribution is stationary. Since $Y_1$ is uniform,
induction proves \eqref{eq:stationary-directed-edge}. Taking the
underlying edge, head, and tail marginals shows that every edge
traversed by $\mathbf X$ is uniform in $E(G)$ and that
$\Pp(X_j=v)=d(v)/(2m)$ for every $j$.

Let $\mathcal A$ be the event that $\mathbf X$ is a simple rainbow path.
At each step, the next vertex is chosen from at least $\alpha n-1$
vertices. There are at most $\ell$ previously visited vertices to
avoid. There are also at most $\ell$ previously used colors, and
properness implies that at most one edge of each such color is
incident with the current vertex. Hence the conditional probability
of repeating a vertex or a color at any step is $O(n^{-1})$. Since
$\ell$ is fixed, a union bound gives
$\Pp(\mathcal A^c)=O(n^{-1})$.

The reversed chain has the same form. Conditioned on
$Y_j=(v,w)$, its predecessor is uniform among the directed edges
$(u,v)$ with $u\in N(v)\setminus\{w\}$. We may therefore expose the
walk in both directions from any fixed edge or vertex position. The
same union-bound argument then shows that
$\Pp(\mathcal A^c\mid\mathcal B)=O(n^{-1})$ whenever $\mathcal B$ is a
positive-probability event specifying an edge, a vertex, or a color at
a prescribed position.

On $\mathcal A$, every edge, vertex, and color occurs at most once.
At each of the $\ell$ edge positions, a fixed edge occurs with
probability $1/m$, while a fixed color $\gamma$ occurs with
probability $|E_\gamma|/m$. At each of the $\ell+1$ vertex positions,
$v$ occurs with probability $d(v)/(2m)$. Finally, at each edge
position the probability that the traversed edge is incident with
$v$ is $d(v)/m$. Summing over the relevant positions and using the
conditional estimate above proves
\eqref{eq:edge-marginal}--\eqref{eq:color-marginal}.

Fix $P=v_0\cdots v_\ell\in\cP$. The probability that $\mathbf X$ follows
this orientation is
\begin{equation}
 \frac1{2m}\prod_{j=1}^{\ell-1}\frac1{d(v_j)-1}.
\end{equation}
The reverse orientation has the same probability, since it has the
same internal vertices. Hence
\begin{equation}\label{eq:exact-path-weight}
 w(P)=\frac1m\prod_{j=1}^{\ell-1}\frac1{d(v_j)-1}.
\end{equation}
For all sufficiently large $n$, we have
$\alpha n^2/2\leq m\leq n^2/2$ and
$\alpha n/2\leq d(v_j)-1\leq n$. Substitution in
\eqref{eq:exact-path-weight} proves \eqref{eq:path-weight-bound}.

It remains to prove the joint estimates. We first record the
conditioning used below. Fix a directed edge $\vec e$ and two edge
positions $r<s$. If $s=r+1$, properness gives at most one permissible
next edge of a prescribed color $\gamma$. If $s\geq r+2$, then,
conditional on $Y_r=\vec e$, we expose $Y_{s-1}=(x,y)$ and use the
same fact at the last step. Thus
\begin{equation}\label{eq:conditional-color-position}
 \Pp\bigl(c(Y_s)=\gamma\mid Y_r=\vec e\bigr)
 \leq \frac1{\alpha n-1}=O(n^{-1}).
\end{equation}
The left-hand side may be zero. Averaging over all possible values of
$Y_{s-1}$ causes no loss, since their conditional probabilities sum
to one. The reversed chain gives the same estimate when $s<r$.
The same last-step argument shows that, after fixing a vertex at one
vertex position, the conditional probability of a prescribed vertex at
any other vertex position is $O(n^{-1})$. The same holds after fixing a
directed edge, provided that the target position is not one of its two
endpoint positions.

Fix two distinct vertex positions. The first prescribed vertex occurs
with probability $d(u)/(2m)=O(n^{-1})$, and the preceding conditional
estimate gives another factor $O(n^{-1})$ for the second one. Summing
over the $O(\ell^2)$ pairs of positions proves
\eqref{eq:two-vertices}.

Write $k=|E_\gamma|$ and fix a vertex position and an edge position.
If the vertex is an endpoint of the edge, properness shows that the
joint probability is zero unless $v\in V(E_\gamma)$, and is then at
most $1/(2m)=O(n^{-2})$. Otherwise, condition further on the actual
directed edge of color $\gamma$. Its position has one of the
$2k$ possible values, each of probability $1/(2m)$, and the
conditional probability of reaching $v$ at the prescribed vertex
position is $O(n^{-1})$. Hence this case contributes
$O(k/(mn))=O(k/n^3)$. Summing over the possible positions proves
\eqref{eq:vertex-color}.

Finally, suppose that $|E_\gamma|\leq |E_{\gamma'}|$, and fix two
distinct edge positions. The probability of seeing $\gamma$ at its
prescribed position is $|E_\gamma|/m$. Conditional on its actual
directed edge, \eqref{eq:conditional-color-position}, applied
forward or backward as appropriate, bounds the probability of seeing
$\gamma'$ at the other position by $O(n^{-1})$. This argument also
covers nonadjacent positions: one averages over the state immediately
before the second position. Summing over the $O(\ell^2)$ ordered
pairs of positions gives
\eqref{eq:two-colors}.
\end{proof}

\subsection{Pseudorandom hypergraph matchings}\label{subsec:egj}

We use the following consequence of the weighted matching theorem of
Ehard, Glock, and Joos~\cite[Theorem~1.2]{EhardGlockJoos}.

\begin{theorem}[Ehard--Glock--Joos]\label{thm:egj}
Fix an integer $k\geq2$ and $\beta\in(0,1)$. There is $\zeta>0$ such
that, for every sufficiently large $D$, the following holds. Let
$\cH$ be a $k$-uniform hypergraph satisfying
\begin{equation}
 \Delta(\cH)\leq D,\qquad
 \Delta_2(\cH)\leq D^{1-\beta},\qquad
 |E(\cH)|\leq\exp(D^\zeta).
\end{equation}
Let $\mathcal W$ be a family of at most $\exp(D^\zeta)$ non-negative
weight functions on $E(\cH)$. If every $\omega\in\mathcal W$ satisfies
\begin{equation}\label{eq:egj-weight-condition}
 \omega(E(\cH))\geq
 D^{1+\beta}\max_{f\in E(\cH)}\omega(f),
\end{equation}
then $\cH$ contains a matching $\mathcal M$ such that, simultaneously
for every $\omega\in\mathcal W$,
\begin{equation}\label{eq:egj-conclusion}
 \omega(\mathcal M)
  =(1\pm D^{-\zeta})\frac{\omega(E(\cH))}{D}.
\end{equation}
\end{theorem}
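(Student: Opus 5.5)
This is a direct specialization of \cite[Theorem~1.2]{EhardGlockJoos}, so the plan is to match parameters rather than to reprove anything. As I recall it (to be checked against the source), the original theorem is stated for a fixed $k\geq2$ and $\delta\in(0,1)$, with an auxiliary exponent $\varepsilon:=\delta/(50k^2)$. It says that there is $\Delta_0$ such that for all $\Delta\geq\Delta_0$ the following holds. Let $\cH$ be $k$-uniform with $\Delta(\cH)\leq\Delta$, $\Delta_2(\cH)\leq\Delta^{1-\delta}$ and $|E(\cH)|\leq\exp(\Delta^{\varepsilon^2})$. Let $\mathcal W$ be a family of at most $\exp(\Delta^{\varepsilon^2})$ weight functions, each satisfying $\omega(E(\cH))\geq\max_f\omega(f)\,\Delta^{1+\delta}$. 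Then some matching $\mathcal M$ satisfies $\omega(\mathcal M)=(1\pm\Delta^{-\varepsilon})\omega(E(\cH))/\Delta$ for every $\omega\in\mathcal W$.

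I would apply it with $\delta:=\beta$ and $\Delta:=D$, and take $\zeta:=\varepsilon^2=\beta^2/(2500k^4)$. Then $\zeta>0$ depends only on $k$ and $\beta$, and "sufficiently large $D$" means $D\geq\Delta_0(k,\beta)$.

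The verification has three steps.
\begin{enumerate}
\item The degree and codegree hypotheses, and the weight condition \eqref{eq:egj-weight-condition}, are literally the original ones with $\delta=\beta$.
\item The size bounds $|E(\cH)|\leq\exp(D^\zeta)$ and $|\mathcal W|\leq\exp(D^\zeta)$ are exactly the original bounds $\exp(\Delta^{\varepsilon^2})$.
\item The original conclusion has error factor $1\pm D^{-\varepsilon}$. Since $\zeta=\varepsilon^2\leq\varepsilon$ and $D\geq1$, we have $D^{-\varepsilon}\leq D^{-\zeta}$. Hence the interval $(1\pm D^{-\varepsilon})\,\omega(E(\cH))/D$ lies inside $(1\pm D^{-\zeta})\,\omega(E(\cH))/D$, which gives \eqref{eq:egj-conclusion}.
\end{enumerate}

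The only real obstacle is bookkeeping. I must make sure the version quoted matches the published statement, in particular three points.
\begin{itemize}
\item The original may phrase the weight condition via $\omega(E(\cH))\geq\max_f\omega(f)\,\Delta^{1+\delta}$, and this must be identical to \eqref{eq:egj-weight-condition}.
\item The original may impose an upper bound on $\beta$, or state the bound on $|\mathcal W|$ with a different exponent. In either case I would shrink $\zeta$ further to the minimum of the exponents involved. This remains legitimate because decreasing $\zeta$ only weakens both the hypotheses $|E(\cH)|,|\mathcal W|\leq\exp(D^\zeta)$ and the conclusion.
\item If the source allows a weaker codegree assumption, or requires $\beta$ small, I would replace $\beta$ by $\min\{\beta,\beta_0\}$. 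This is monotone in the right direction, since $\Delta_2(\cH)\leq D^{1-\beta}\leq D^{1-\beta'}$ for $\beta'\leq\beta$, and $D^{1+\beta}\geq D^{1+\beta'}$.
\end{itemize}
With these monotonicity observations, the consequence follows from the published statement.
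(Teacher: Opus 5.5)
Your proposal is correct and matches the paper, which gives no separate argument and simply quotes the statement as a consequence of \cite[Theorem~1.2]{EhardGlockJoos}; your recollection of that theorem ($\varepsilon=\delta/(50k^2)$, size bounds $\exp(\Delta^{\varepsilon^2})$, error $1\pm\Delta^{-\varepsilon}$) is accurate, and taking $\delta=\beta$, $\Delta=D$, $\zeta=\varepsilon^2$, together with $D^{-\varepsilon}\le D^{-\zeta}$, is exactly the required bookkeeping. One small wording slip: decreasing $\zeta$ makes the size hypotheses $|E(\cH)|,|\mathcal W|\le\exp(D^\zeta)$ more restrictive rather than weaker, and that is precisely why the resulting statement is weaker and still follows from the original.
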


\section{Graphs with linear minimum degree}
\label{sec:linear-minimum-degree}

The main result of this section has two parts. Part~(i) restates
Theorem~\ref{thm:dense-main}, while part~(ii) gives the sharper bound
for complete multipartite graphs under the same minimum-degree
hypothesis. Both proofs use the same stationary-path and
auxiliary-hypergraph mechanism, but they require different forms of
the packing lemma and different completion arguments.
We shall use the notation
\begin{equation}\label{eq:Q-definition}
 Q(G):=\max\{\nu(G),\Delta(G)/2\}.
\end{equation}

\begin{theorem}[Linear minimum degree]\label{thm:linear-minimum-degree}
For every $0<\alpha<1$ and every $\varepsilon>0$, there exists
$n_0=n_0(\alpha,\varepsilon)$ such that the following holds. Let $G$ be
a graph on $n\geq n_0$ vertices with $\delta(G)\geq\alpha n$, and let
$c$ be a proper edge-coloring of $G$.
\begin{enumerate}
\item[(i)] One has
\begin{equation}\label{eq:sharp-dense-upper}
 \rpc(G,c)\leq(1+\varepsilon)\frac n2.
\end{equation}
\item[(ii)] If $G$ is complete multipartite, then
\begin{equation}\label{eq:dense-multipartite-upper}
 \rpc(G,c)\leq(1+\varepsilon)Q(G).
\end{equation}
\end{enumerate}
\end{theorem}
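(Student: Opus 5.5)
The plan is to build an auxiliary hypergraph on the stationary rainbow paths, extract a pseudorandom matching with Theorem~\ref{thm:egj}, and then repair the leftover with a small number of extra paths. Fix a large constant $\ell$ and let $\cP$ and $w$ be as in Lemma~\ref{lem:stationary}. Let $t=\lceil(1+\varepsilon/3)T\rceil$ be the target number of forests, with $T=n/2$ in case~(i) and $T=Q(G)$ in case~(ii). Define a hypergraph $\cH$ whose vertex set has four kinds of elements:
\begin{enumerate}
\item[] the edges $e\in E(G)$;
\item[] pairs $(v,i)$, with $v\in V(G)$ and $i\in[t]$, used $2$ times (degree capacity two);
\item[] pairs $(\gamma,i)$, with $\gamma$ a color and $i\in[t]$;
\item[] pairs of the form (vertex, forest index), marking that $v$ is an endpoint in forest $i$.
\end{enumerate}
Each pair $(P,i)$ becomes an edge of $\cH$ containing $E(P)$, the color slots $(\gamma,i)$ for $\gamma\in\col(P)$, and vertex slots encoding the vertices of $P$. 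Degree-two capacity is handled by splitting each vertex $v$ into two copies $v^{(1)},v^{(2)}$ per forest, with $P$ using one copy per incidence and the choice randomized. Weight the edges of $\cH$ by $w(P)/t$ suitably normalized. A matching in $\cH$ then yields, for each $i$, a forest $F_i$ that is globally rainbow (each color slot is used once) and has maximum degree at most $2$.

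The estimates \eqref{eq:edge-marginal}--\eqref{eq:color-marginal} say that the degrees in $\cH$ are all close to a common $D$ after normalization. This holds for edges because $t\approx n/2$ or $Q$ balances the vertex incidences $\ell d(v)/m$. It holds for colors because $|E_\gamma|\le\nu(G)\le T$. Whenever $d(v)$ or $|E_\gamma|$ is smaller, I add dummy edges to make $\cH$ regular. The codegree bounds come from \eqref{eq:two-vertices}--\eqref{eq:two-colors} together with \eqref{eq:path-weight-bound}. Cycles must also be excluded from the forests. I would enforce this by requiring that the paths of each forest have no repeated vertices; short paths combined globally only form cycles through chains, and I would break such chains by deleting $o(n)$ edges per forest. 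Theorem~\ref{thm:egj} then covers all but $o(n^2)$ edges, with each uncovered edge tracked by test weights. The test weights record the leftover degrees at each vertex and the leftover edges of each color, and show that both are $o(n)$.

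Each linear forest $F_i$ has $\Theta(n/\ell)$ components. To turn it into a single rainbow path, I connect consecutive components using connector vertices and colors reserved in advance. I take a random reserve $R_i$ of $\eta n$ vertices and a set of $\eta n$ colors private to forest $i$, excluded from the packing through forbidden slots. Since $\delta(G)\ge\alpha n$, two endpoints generally have no common neighbour, so I would join them by rainbow paths of bounded length inside a robust dense piece. For case~(ii), complete multipartite structure gives length-$2$ connectors through other parts. The leftover $o(n^2)$ edges form a graph of maximum degree $o(n)$ and color multiplicity $o(n)$. By the greedy $O(\Delta'\log)$-type argument, or more simply by a proper decomposition into $o(n)$ matchings, each split into rainbow paths, I would cover them with $o(n)$ further paths.

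The main obstacle is connectivity in case~(i). An arbitrary dense graph can be a disjoint union of up to $1/\alpha$ cliques, or be nearly disconnected, so a single forest cannot be joined into one path. The remedy is to decompose $G$ into boundedly many robust dense parts and pack forests within each part. The sparse interface edges must then be threaded into the per-part paths via prescribed forbidden sets. This still has to total $(1+o(1))n/2$, which works because the part sizes sum to $n$. I expect the packing with prescribed avoidance and disjoint reserves to be the most delicate step.
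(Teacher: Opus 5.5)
Your outline has the paper's architecture: stationary-walk weights, an Ehard--Glock--Joos packing into globally rainbow linear forests with private reserves, and a robust decomposition for (i). However, the step that actually carries part (i) is missing. After the decomposition, cross edges can be concentrated: a few vertices may have cross-degree of order $n$, and a few colors may have order-$n$ many cross edges. These edges cannot be left for the end. Leftover edges at a vertex of leftover degree $d$ need $d/2$ paths, and a leftover color class of size $k$ needs $k$ paths. So essentially every main path must also carry a piece $uxv$ through each heavy vertex $x$ and one edge of each heavy color, and ``threaded via prescribed forbidden sets'' does not say how. The paper isolates $X=\{v:d_C(v)>\lambda n\}$ and the heavy colors $\Gamma$ of $C-X$, and deletes both from the parts. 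Before packing, it builds ordered lists $\mathcal Q_t$ of pieces whose consecutive members meet a common part, so every join can be made inside some robust $H_{j'}$. It does this with a balanced orientation of the quotient multigraph and Hall's theorem, using $20s$ spare indices per part, where $s=|X|+|\Gamma|$ (Lemma~\ref{lem:exceptional-sequences}). Only then does it pack each $F_t$ avoiding $V(\mathcal Q_t)$ and $\col(\mathcal Q_t)$. This changes edge-slot degrees by only a relative $O(r)$, because the forbidden sets are small and each vertex or color lies in few of them (Proposition~\ref{lem:forbidden-packing}). All of this needs $s\le rn$ with $r$ much smaller than both the robustness constant of the parts and the connector density $\zeta$. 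A single-threshold decomposition cannot give that, which is why the nested thresholds $\theta_0\ll\cdots\ll\theta_K$ of Lemma~\ref{lem:nested-robust-decomposition} are used.

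Your leftover step is also wrong as written. In (i) the leftover does not have maximum degree $o(n)$: the light cross edges alone have degree and color multiplicity up to $\lambda n$. An $O(\Delta'\log n)$ greedy bound therefore gives far too many paths. Splitting the leftover into $O(\max\{\Delta,\mu_c\})$ rainbow matchings (Lemma~\ref{lem:residual-matchings}) is the right start. But each matching of linear size must then be placed inside $O(1)$ rainbow paths of $G$, not split into pieces. In (i) this is Lemma~\ref{lem:forest-extension}, where every closed path leaves a set $U_i$ of at least $\delta/2$ available continuations and these sets are pairwise disjoint. In (ii) it is Lemma~\ref{lem:matching-extension}.

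In (ii), your length-$2$ connectors need not exist. If $x\in A$, $y\in B$ and $n-|A|-|B|$ is small, as in $K_{n/2,n/2}$, then $x$ and $y$ have no common neighbour and every $x$--$y$ path has odd length. There you need connectors $xuvy$ with $u\in B$ and $v\in A$.

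Finally, the capacity-two vertex slots are unnecessary. As you describe them, they let components of one forest share endpoints. That forces an uncontrolled cycle-breaking step, and your ``$o(n)$ edges per forest'' is unjustified. A single slot $V_{i,v}$ has degree about $\frac{\ell+1}{\ell}\cdot\frac{d(v)}{2q\phi}$ times the common edge-slot degree. This ratio is below $1-\xi$ because $d(v)\le 2Q(G)$ and $q\ge(1+\eta/3)Q(G)$. So capacity one already gives vertex-disjoint components.
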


Here is a guide to the proof. The next subsection develops the basic rainbow linear-forest packing in the form used for complete multipartite graphs.
The part structure then provides short direct connectors between the components of each forest. 
For part~(i), a general dense graph may contain large pieces joined by relatively few edges.
We first separate these pieces and organize the edges between them. We then adapt the same packing mechanism so that each forest avoids prescribed vertices and colors.
Finally, we join the resulting pieces and cover the few remaining edges.

\subsection{Rainbow linear-forest packing}\label{subsec:forest-packing}

We first develop the form of the packing used in the complete
multipartite case.

Fix $\alpha,\eta>0$. Let $(G,c)$ be a properly edge-colored graph on
$n$ vertices with $\delta(G)\geq\alpha n$. All constants in this
subsection may depend on $\alpha$ and $\eta$.

A \textbf{linear forest} is a graph whose connected components are
paths. It is \textbf{globally rainbow} if all its edges have pairwise
distinct colors, including edges in different components. Our aim is to
construct edge-disjoint globally rainbow linear forests
$F_1,\ldots,F_q$. Every component of each $F_i$ will be an
$\ell$-edge path. In the completion step, these components will be
joined using vertices and colors reserved for the index $i$.

The natural scale for $q$ is $Q(G)$. Since
$\delta(G)\geq\alpha n$, we have
\begin{equation}\label{eq:Q-linear}
\frac{\alpha n}{2}\leq\frac{\Delta(G)}2\leq Q(G)\leq\frac n2.
\end{equation}
We put
\begin{equation}\label{eq:q}
q:=\left\lceil\left(1+\frac{\eta}{3}\right)Q(G)\right\rceil.
\end{equation}
In particular, $q=\Theta(n)$.

We now choose the reserve densities. Take $\sigma,\tau>0$ sufficiently
small in terms of $\alpha$ and $\eta$, and then choose $\ell$
sufficiently large so that
\begin{equation}\label{eq:hierarchy}
0<\sigma,\tau\ll\eta,\alpha,
\qquad \frac1\ell\ll\sigma\tau^2\alpha\eta.
\end{equation}
For every $i\in[q]$, assign each vertex independently to exactly one
of $W_i^1$, $W_i^2$, and the unreserved set, with probabilities
$\sigma$, $\sigma$, and $1-2\sigma$, respectively. Independently,
assign each color to exactly one of $R_i^1$, $R_i^2$, $R_i^3$, and
the unreserved set, with probabilities $\tau$, $\tau$, $\tau$, and
$1-3\tau$, respectively. All assignments are independent for
different vertices, colors, and indices.

Put $\phi=1-2\sigma$ and $\theta=1-3\tau$, the probabilities that a
vertex and a color are unreserved for a fixed index. A pair $(i,P)$
is \textbf{admissible} if every vertex and color of $P$ is unreserved
for $i$.

The factors $1/\phi$ and $1/\theta$ will appear when we estimate the
degrees of the vertices $V_{i,v}$ and $C_{i,\gamma}$ in the auxiliary
hypergraph. By taking $\sigma$ and $\tau$ sufficiently small and then
$\ell$ sufficiently large, we may fix $\xi>0$ such that
\begin{equation}\label{eq:slack}
\frac{1+1/\ell}{\phi(1+\eta/3)}<1-\xi,
\qquad
\frac1{\theta(1+\eta/3)}<1-\xi.
\end{equation}
These inequalities ensure that these degrees are smaller, by a fixed
proportion, than the degrees of the vertices $E_e$. This is
needed when applying Theorem~\ref{thm:egj}.

\begin{lemma}[Regularized auxiliary hypergraph]\label{lem:regularisation}
With probability $1-o(1)$ over the choice of the reserves, there exists a
simple $(3\ell+1)$-uniform hypergraph $\cH$ whose vertex set consists of
the three disjoint families
\begin{equation}
 \{E_e:e\in E(G)\},\qquad
 \{V_{i,v}:i\in[q],v\in V(G)\},\qquad
 \{C_{i,\gamma}:i\in[q],\gamma\in\col(G)\}.
\end{equation}
Here $E_e$ records the use of the edge $e$, while $V_{i,v}$ and
$C_{i,\gamma}$ record the use of the vertex $v$ and the color
$\gamma$ by the forest with index $i$. Every hyperedge corresponds to
an admissible pair $(i,P)$ and is
\begin{equation}\label{eq:auxiliary-edge}
 \{E_e:e\in E(P)\}
 \cup\{V_{i,v}:v\in V(P)\}
 \cup\{C_{i,\gamma}:\gamma\in\col(P)\}.
\end{equation}
The edge-record degrees are asymptotically equal and have order
$n^\ell$; uniformly over $e,f\in E(G)$,
\begin{equation}\label{eq:degree-edge}
 d_{\cH}(E_e)=\Theta(n^\ell),
 \qquad d_{\cH}(E_e)=(1+o(1))d_{\cH}(E_f).
\end{equation}
Uniformly over $e\in E(G)$, $i\in[q]$, $v\in V(G)$, and
$\gamma\in\col(G)$, the following estimates hold:
\begin{align}
 d_{\cH}(V_{i,v})
  &=(1+o(1))d_{\cH}(E_e)\frac{\ell+1}{\ell}
      \frac{d(v)}{2q\phi},
  \label{eq:degree-vertex}\\
 d_{\cH}(C_{i,\gamma})
  &=(1+o(1))d_{\cH}(E_e)\frac{|E_\gamma|}{q\theta}.
  \label{eq:degree-color}
\end{align}
Estimate~\eqref{eq:degree-vertex} applies when $v$ is unreserved for $i$.
Estimate~\eqref{eq:degree-color} applies when $\gamma$ is unreserved for
$i$ and $|E_\gamma|\geq n^{1/2}$. If $v$ is reserved for $i$, then
$d_{\cH}(V_{i,v})=0$. If $\gamma$ is reserved for $i$, then
$d_{\cH}(C_{i,\gamma})=0$. If $|E_\gamma|<n^{1/2}$, then
$d_{\cH}(C_{i,\gamma})=o(n^\ell)$ uniformly over $i$. Moreover,
\begin{equation}\label{eq:auxiliary-codegree}
 \Delta_2(\cH)=O(n^{\ell-1}).
\end{equation}
\end{lemma}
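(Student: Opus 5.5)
The plan is to build $\cH$ by a random thinning of all admissible pairs, with keep-probabilities proportional to the stationary weights $w(P)$ from Lemma~\ref{lem:stationary}, and then show that every degree concentrates around the value dictated by the marginals \eqref{eq:edge-marginal}--\eqref{eq:color-marginal}. By \eqref{eq:path-weight-bound}, $p(P):=w(P)/(Cn^{-\ell-1})\in[c_0/C,1]$ is a legitimate probability. For each $i\in[q]$ and $P\in\cP$, I include the hyperedge \eqref{eq:auxiliary-edge} for $(i,P)$ when $(i,P)$ is admissible and an independent coin with bias $p(P)$ succeeds. Since $P$ is determined by its edge records and $i$ by any vertex record, distinct pairs give distinct hyperedges. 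Each hyperedge has $\ell+(\ell+1)+\ell=3\ell+1$ elements, because $P$ is simple and rainbow, so $\cH$ is simple and $(3\ell+1)$-uniform. I will show that the required estimates hold with probability $1-o(1)$ over the joint randomness. By Markov's inequality applied to the conditional failure probability, they then hold with probability $1-o(1)$ over the reserves for some choice of coins.

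\emph{Expected degrees.} Fix $e$. A pair $(i,P)$ with $e\in E(P)$ is admissible with probability $\phi^{\ell+1}\theta^{\ell}$. So by \eqref{eq:edge-marginal},
\[
\E d_\cH(E_e)=q\phi^{\ell+1}\theta^\ell\frac{(1+o(1))\ell/m}{Cn^{-\ell-1}}=\Theta(n^\ell),
\]
with the same leading term for every $e$. For $V_{i,v}$ with $v$ unreserved for $i$, the other $\ell$ vertices and the $\ell$ colours must be unreserved. Then \eqref{eq:vertex-marginal} gives mean $\phi^\ell\theta^\ell(1+o(1))\frac{(\ell+1)d(v)}{2m}/(Cn^{-\ell-1})$. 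Dividing by $\E d_\cH(E_e)$ yields exactly $\frac{\ell+1}{\ell}\frac{d(v)}{2q\phi}$, which is \eqref{eq:degree-vertex}. Similarly, \eqref{eq:color-marginal} with factor $\phi^{\ell+1}\theta^{\ell-1}$ yields the ratio $|E_\gamma|/(q\theta)$ in \eqref{eq:degree-color}. Reserved vertices and colours have degree $0$ by admissibility.

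\emph{Concentration.} This is the main work. For $E_e$, the contributions of different indices $i$ are independent, since the reserves are independent across $i$. Each index contributes a variable in $[0,O(n^{\ell-1})]$, because there are $O(n^{\ell-1})$ paths through $e$. Hoeffding \eqref{eq:hoeffding-standard} therefore gives relative error $o(1)$ with failure probability $\exp(-\Omega(n^{1-o(1)}))$.

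For $V_{i,v}$ and $C_{i,\gamma}$ the index $i$ is fixed, so I will first condition on the reserves and apply the bounded-differences inequality \eqref{eq:mcdiarmid-standard} to the conditional mean. Changing the status of a vertex $u$ alters the mean by at most the weighted count of paths through both $u$ and $v$. By \eqref{eq:two-vertices} this is $O(n^{-2})\cdot n^{\ell+1}=O(n^{\ell-1})$, and summing over $n$ vertices gives $O(n^{2\ell-1})$. Changing a colour $\gamma'$ costs, by \eqref{eq:vertex-color}, $O(n^{\ell-1}\mathbf1_{v\in V(E_{\gamma'})}+|E_{\gamma'}|n^{\ell-2})$. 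Since only $O(n)$ colours meet $v$ and $\sum_{\gamma'}|E_{\gamma'}|\le n^2$ with $|E_{\gamma'}|\le n$, the sum of squares is $O(n^{2\ell-1})$. Both are $o(n^{2\ell})$, which gives concentration with failure probability $\exp(-\Omega(n^{1-o(1)}))$.

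The same computation handles colour records, using \eqref{eq:two-colors} for the colour differences and \eqref{eq:vertex-color} for the vertex differences. When $|E_\gamma|\ge n^{1/2}$ the mean is $\Omega(n^{\ell-1/2})$, which must dominate the fluctuation scale $\sqrt{n^{2\ell-1}}$. Making this comparison tight is where I expect the real obstacle. I would try bounding the differences more carefully by $|E_\gamma|$-dependent quantities, so that the squared sum becomes $O(|E_\gamma|n^{2\ell-2})$. If that fails, I would instead use Janson \eqref{eq:janson-standard} on the product indicators of reserve statuses. Once the reserves are fixed, the coins are handled by Chernoff \eqref{eq:chernoff-standard}, since the mean is at least $n^{\ell-1/2}$. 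For $|E_\gamma|<n^{1/2}$ the mean is $O(n^{\ell-1/2})$, and the upper tail \eqref{eq:chernoff-upper} gives $o(n^\ell)$. A union bound over the polynomially many records completes this step.

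\emph{Codegree.} Every bound here is deterministic: I count the pairs $(i,P)$ whose hyperedge contains both records, and this count dominates the codegree in $\cH$ because thinning only removes hyperedges. For $E_e,E_f$ there are $q\cdot O(n^{\ell-2})=O(n^{\ell-1})$ such pairs. For $E_e$ together with $V_{i,v}$ or $C_{i,\gamma}$ the index is fixed, giving $O(n^{\ell-1})$. For two records with the same index, \eqref{eq:two-vertices}, \eqref{eq:vertex-color}, and \eqref{eq:two-colors}, multiplied by $n^{\ell+1}/c_0$ via \eqref{eq:path-weight-bound}, give $O(n^{\ell-1})$, using $|E_\gamma|\le n$. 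Records with different indices share no hyperedge. This proves \eqref{eq:auxiliary-codegree}.
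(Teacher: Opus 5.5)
\textbf{Gap: the colour-record estimate \eqref{eq:degree-color} is not proved for $|E_\gamma|$ close to $n^{1/2}$.}

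Most of your proposal follows the paper's proof. Your keep-probability $w(P)/(Cn^{-\ell-1})$ is the paper's $an^{\ell+1}w(P)$ with $a=C^{-1}$. The mean computations, Hoeffding for $E_e$, bounded differences for $V_{i,v}$, the upper tail for small colour classes, and the Markov reduction to a good reserve realization are all as in the paper. Your deterministic count of candidate pairs $(i,P)$ is a valid, slightly simpler substitute for the paper's expected-codegree-plus-upper-tail argument for \eqref{eq:auxiliary-codegree}.

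The gap is the one you flagged yourself. Write $k:=|E_\gamma|$. Your crude bound of $O(n^{2\ell-1})$ on the sum of squared influences gives fluctuations of order $n^{\ell-1/2}$, up to logarithmic factors. That is not small compared with the mean $\Theta(kn^{\ell-1})$ when $k$ is close to $n^{1/2}$, so the relative $(1+o(1))$ claim is not established. The Janson fallback does not close this. Inequality \eqref{eq:janson-standard} controls only the lower tail, whereas the upper bound on $d_\cH(C_{i,\gamma})$ is the direction used in Lemma~\ref{lem:forest-packing}. That application would in fact tolerate an additive $o(n^\ell)$ error in the upper bound, which your crude estimate does give. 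The lemma as stated, however, asserts relative accuracy in both directions.

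The refinement you proposed is what is needed, and it follows directly from Lemma~\ref{lem:stationary}; this is the paper's route. Apply \eqref{eq:mcdiarmid-standard} to the conditional mean $an^{\ell+1}Z_{i,\gamma}$.
\begin{itemize}
\item \emph{Vertex influences.} By \eqref{eq:vertex-color}, only the $2k$ vertices of $V(E_\gamma)$ have influence $O(n^{\ell-1})$. Every other vertex has influence $O(kn^{\ell-2})$. Since $k\le n/2$, the squared vertex influences sum to $O(kn^{2\ell-2}+k^2n^{2\ell-3})=O(kn^{2\ell-2})$.
\item \emph{Colour influences.} For $\gamma'\neq\gamma$, let $b_{\gamma'}$ be the weight of paths using both colours. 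By \eqref{eq:two-colors}, $\max_{\gamma'}b_{\gamma'}=O(kn^{-3})$. By \eqref{eq:color-marginal}, and since each path carries at most $\ell-1$ further colours, $\sum_{\gamma'}b_{\gamma'}=O(kn^{-2})$. Hence $\sum_{\gamma'}b_{\gamma'}^2\le\max_{\gamma'}b_{\gamma'}\cdot\sum_{\gamma'}b_{\gamma'}=O(k^2n^{-5})$, which is $O(kn^{2\ell-2})$ after scaling by $(an^{\ell+1})^2$.
\end{itemize}
Take relative error $\rho=n^{-1/10}$. The failure probability is then $\exp(-\Omega(\rho^2k))\le\exp(-\Omega(n^{3/10}))$ for $k\ge n^{1/2}$, which survives the union bound. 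The coins are then handled by \eqref{eq:chernoff-standard}, since the conditional mean is at least of order $n^{\ell-1/2}$.
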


\begin{proof}
We first choose the reserve sets and then, conditional on them, sample the
hyperedges. Let $C=C(\alpha,\ell)$ be the upper-bound constant in
\eqref{eq:path-weight-bound}, and fix a constant $0<a\leq C^{-1}$. For every
admissible pair $(i,P)$, include the hyperedge in
\eqref{eq:auxiliary-edge} independently with probability
$a n^{\ell+1}w(P)$. This is a valid probability because
$a n^{\ell+1}w(P)\leq aC\leq1$.

Put $\kappa=\phi^{\ell+1}\theta^\ell$. Every path in $\cP$ has
$\ell+1$ distinct vertices and $\ell$ distinct colors. Hence a fixed
pair $(i,P)$ is admissible with probability $\kappa$. For a fixed edge
$e$, taking expectation over both random choices gives
\begin{equation}\label{eq:unconditional-edge-degree}
 \begin{aligned}
 \E d_{\cH}(E_e)
  &=\sum_{i=1}^q\sum_{\substack{P\in\cP\\e\in E(P)}}
       \kappa a n^{\ell+1}w(P)\\
  &=(1+o(1))q\kappa a n^{\ell+1}\frac{\ell}{m}.
 \end{aligned}
\end{equation}
The second equality is precisely the edge marginal
\eqref{eq:edge-marginal} from Lemma~\ref{lem:stationary}. We therefore
define
\begin{equation}\label{eq:D0-definition}
 D_0:=q\kappa a n^{\ell+1}\frac{\ell}{m}.
\end{equation}
By \eqref{eq:Q-linear}, $q=\Theta(n)$. Also $m=\Theta(n^2)$ because
$\delta(G)\geq\alpha n$. Thus $D_0=\Theta(n^\ell)$.

The resulting hypergraph is simple. Indeed, if $P\neq P'$, then
$E(P)\neq E(P')$, since the edge set of a simple path determines it
up to reversal. If $P=P'$ but $i\neq i'$, then the two hyperedges
contain different vertices of the form $V_{i,v}$.

We next show that the reserve sets can be fixed so that all conditional
expected degrees have the required values. For $i\in[q]$ and
$v\in V(G)$, let
$Z_{i,v}$ be the sum of $w(P)$ over all paths $P$ containing $v$ for
which every vertex in $V(P)\setminus\{v\}$ and every color in
$\col(P)$ are unreserved for $i$. The reserve status of $v$ itself is
not tested in this definition. Similarly, let $Z_{i,\gamma}$ be the
sum of $w(P)$ over all paths $P$ using $\gamma$ for which every vertex
in $V(P)$ and every color in $\col(P)\setminus\{\gamma\}$ are
unreserved for $i$. Put $k_\gamma=|E_\gamma|$. The vertex and color
marginals \eqref{eq:vertex-marginal} and
\eqref{eq:color-marginal} give
\begin{equation}\label{eq:indexed-means}
 \begin{aligned}
 \E Z_{i,v}
   &=(1+o(1))\frac{\kappa}{\phi}
      \frac{(\ell+1)d(v)}{2m}=\Theta(n^{-1}),\\
 \E Z_{i,\gamma}
   &=(1+o(1))\frac{\kappa}{\theta}
      \frac{\ell k_\gamma}{m}=\Theta(k_\gamma n^{-2}).
 \end{aligned}
\end{equation}

We apply the bounded-differences inequality
\eqref{eq:mcdiarmid-standard} to these sums. Changing the reserve
status of a vertex $u\neq v$ changes $Z_{i,v}$ by at most the total
weight of paths containing both $u$ and $v$. This is $O(n^{-2})$ by
\eqref{eq:two-vertices}. Changing the reserve status of a color
$\gamma$ changes $Z_{i,v}$ by at most
$O(\mathbf1_{\{v\in V(E_\gamma)\}}n^{-2}+k_\gamma n^{-3})$ by
\eqref{eq:vertex-color}. Properness gives
$\sum_\gamma\mathbf1_{\{v\in V(E_\gamma)\}}=d(v)$ and
$\max_\gamma k_\gamma\leq n/2$, while
$\sum_\gamma k_\gamma=m=O(n^2)$. Hence
\begin{equation}\label{eq:vertex-influence-squares}
 \begin{aligned}
 &\sum_{u\neq v}O(n^{-2})^2+
 \sum_\gamma
 O\left(\frac{\mathbf1_{\{v\in V(E_\gamma)\}}}{n^2}
       +\frac{k_\gamma}{n^3}\right)^2\\
 &\qquad=O\left(\frac{n}{n^4}+\frac{d(v)}{n^4}
       +\frac{\max_\gamma k_\gamma\sum_\gamma k_\gamma}{n^6}\right)
 =O(n^{-3}).
 \end{aligned}
\end{equation}

Now fix a color $\gamma$ and put $k=k_\gamma$. By
\eqref{eq:vertex-color}, changing the reserve status of a vertex $u$
changes $Z_{i,\gamma}$ by at most
$O(\mathbf1_{\{u\in V(E_\gamma)\}}n^{-2}+kn^{-3})$. Since
$|V(E_\gamma)|=2k$ and $k\leq n/2$, the squared vertex influences
sum to
\begin{equation}\label{eq:color-vertex-influence-squares}
 O\left(\frac{|V(E_\gamma)|}{n^4}+\frac{k^2}{n^5}\right)
 =O(kn^{-4}).
\end{equation}
For $\gamma'\neq\gamma$, let $b_{\gamma'}$ be the total weight of
paths using both $\gamma$ and $\gamma'$. Equation
\eqref{eq:two-colors} gives
$\max_{\gamma'\neq\gamma}b_{\gamma'}=O(kn^{-3})$. Since every path
using $\gamma$ uses at most $\ell-1$ other colors,
$\sum_{\gamma'\neq\gamma}b_{\gamma'}=O(kn^{-2})$ by
\eqref{eq:color-marginal}. Therefore
\begin{equation}\label{eq:color-influence-squares}
 \sum_{\gamma'\neq\gamma}b_{\gamma'}^2
 \leq \left(\max_{\gamma'\neq\gamma}b_{\gamma'}\right)
       \sum_{\gamma'\neq\gamma}b_{\gamma'}
 =O(k^2n^{-5})=O(kn^{-4}).
\end{equation}
Together with \eqref{eq:color-vertex-influence-squares}, this gives
total squared influence $O(kn^{-4})$ for $Z_{i,\gamma}$.

Set $\rho=n^{-1/10}$. The bounded-differences inequality
\eqref{eq:mcdiarmid-standard} and
\eqref{eq:indexed-means} give
\begin{equation}
 \begin{aligned}
 \Pp\bigl(|Z_{i,v}-\E Z_{i,v}|>\rho\E Z_{i,v}\bigr)
   &\leq\exp(-\Omega(n^{4/5})),\\
 \Pp\bigl(|Z_{i,\gamma}-\E Z_{i,\gamma}|>
                  \rho\E Z_{i,\gamma}\bigr)
   &\leq\exp(-\Omega(k n^{-1/5})).
 \end{aligned}
\end{equation}
For $k\geq n^{1/2}$, the second bound is at most
$\exp(-\Omega(n^{3/10}))$. There are at most $m=O(n^2)$ colors,
since every color is used by an edge. A union bound gives these estimates
simultaneously for every $(i,v)$ and every $(i,\gamma)$ with
$k_\gamma\geq n^{1/2}$.

For $e\in E(G)$, let $Y_{i,e}$ be the sum of $w(P)$ over all paths
$P$ containing $e$ that are admissible for $i$. The variables
$Y_{i,e}$ are independent for different $i$. By
\eqref{eq:edge-marginal}, each is $O(n^{-2})$, and
\begin{equation}\label{eq:edge-reserve-mean}
 \E\sum_{i=1}^qY_{i,e}
  =q\kappa\sum_{\substack{P\in\cP\\e\in E(P)}}w(P)
  =(1+o(1))q\kappa\frac{\ell}{m}.
\end{equation}
Since the last quantity is $\Theta(n^{-1})$,
\eqref{eq:hoeffding-standard}, with relative error $\rho$, gives failure
probability
$\exp(-\Omega(n\rho^2))=\exp(-\Omega(n^{4/5}))$. Another union bound
gives, uniformly over $e\in E(G)$,
\begin{equation}\label{eq:edge-reserve-concentration}
 \sum_{i=1}^qY_{i,e}
  =(1\pm\rho)(1+o(1))q\kappa\frac{\ell}{m}.
\end{equation}
Fix reserve sets for which this estimate and the preceding indexed
estimates hold.

After the reserve sets are fixed, the hyperedges are sampled
independently. Their conditional expected degrees are
\begin{equation}\label{eq:conditional-degree-means}
 \begin{aligned}
 \E d_{\cH}(E_e)
   &=a n^{\ell+1}\sum_{i=1}^qY_{i,e}
     =(1+o(1))D_0,\\
 \E d_{\cH}(V_{i,v})
   &=a n^{\ell+1}Z_{i,v}
     =(1+o(1))D_0\frac{\ell+1}{\ell}
          \frac{d(v)}{2q\phi},\\
 \E d_{\cH}(C_{i,\gamma})
   &=a n^{\ell+1}Z_{i,\gamma}
     =(1+o(1))D_0\frac{k_\gamma}{q\theta}
     \quad\text{if }k_\gamma\geq n^{1/2}.
 \end{aligned}
\end{equation}
The second line applies when $v$ is unreserved for $i$, and the third
when $\gamma$ is unreserved for $i$. Equation
\eqref{eq:chernoff-standard}, followed by a union bound, now gives
\eqref{eq:degree-edge}--\eqref{eq:degree-color}. If
$v$ or $\gamma$ is reserved for $i$, the corresponding degree is zero. If
$k_\gamma<n^{1/2}$, then \eqref{eq:color-marginal} gives
\begin{equation}
 \E d_{\cH}(C_{i,\gamma})
 \leq a n^{\ell+1}
       \sum_{\substack{P\in\cP\\\gamma\in\col(P)}}w(P)
 =O(k_\gamma n^{\ell-1})=o(D_0).
\end{equation}
Since $k_\gamma\geq1$ and $\ell\geq3$, we have
$k_\gamma n^{\ell-1}\gg\log n$. The upper-tail bound
\eqref{eq:chernoff-upper}, with $s$ a sufficiently large multiple of
$k_\gamma n^{\ell-1}$, followed by a union bound, therefore gives
$d_{\cH}(C_{i,\gamma})=O(k_\gamma n^{\ell-1})=o(D_0)$
uniformly over all such $i$ and $\gamma$.

It remains to bound the codegrees. Fix distinct edges $e,f$ and two
distinct edge positions $r,s$. Conditional on an orientation of $e$
being traversed at position $r$, expose the walk up to position $s-1$,
using the reversed chain if $s<r$. Given the directed edge at position
$s-1$, the probability that the next edge is $f$ is at most
$2/(\alpha n-1)=O(n^{-1})$. Summing over the $O(\ell^2)$ choices of
$r,s$ shows that the total weight of paths containing both $e$ and $f$
is $O(n^{-3})$. The total weight of paths containing a fixed edge is
$O(n^{-2})$ by \eqref{eq:edge-marginal}. Moreover,
\eqref{eq:two-vertices}--\eqref{eq:two-colors} show that the total
weight of paths containing any two prescribed vertices or colors is
$O(n^{-2})$; the same is true for one prescribed vertex and one
prescribed color.

Let $A$ and $B$ be two distinct vertices of $\cH$ of the form
$V_{i,v}$ or $C_{i,\gamma}$, with the same index $i$. The preceding
bounds and the sampling probability $a n^{\ell+1}w(P)$ give
\begin{equation}\label{eq:codegree-means}
 \begin{aligned}
 \E d_{\cH}(E_e,E_f)&=O(q a n^{\ell+1}n^{-3}),\\
 \E d_{\cH}(E_e,V_{i,v})&=O(a n^{\ell+1}n^{-2}),\\
 \E d_{\cH}(E_e,C_{i,\gamma})&=O(a n^{\ell+1}n^{-2}),\\
 \E d_{\cH}(A,B)&=O(a n^{\ell+1}n^{-2}).
 \end{aligned}
\end{equation}
Two indexed vertices with different indices have codegree zero. Since
$q=\Theta(n)$ and $D_0=\Theta(n^\ell)$, every quantity in
\eqref{eq:codegree-means} is $O(D_0/n)$. The auxiliary vertex set has
polynomial size, while $D_0/n=\Theta(n^{\ell-1})\gg\log n$. The
upper-tail bound \eqref{eq:chernoff-upper}, with $s$ a sufficiently
large multiple of $D_0/n$, followed by a union bound over all pairs,
therefore gives
$\Delta_2(\cH)=O(D_0/n)=O(n^{\ell-1})$. This proves
\eqref{eq:auxiliary-codegree}.
\end{proof}

\begin{lemma}[Dense rainbow packing]\label{lem:forest-packing}
Fix $\alpha,\eta>0$, choose $\sigma,\tau,\ell$ as in
\eqref{eq:hierarchy}--\eqref{eq:slack}, and let $q$ be given by
\eqref{eq:q}. If $(G,c)$ is a properly edge-colored $n$-vertex graph with
$\delta(G)\geq\alpha n$, then, with probability $1-o(1)$ over the choice
of the reserves, there are edge-disjoint globally rainbow linear forests
$F_1,\ldots,F_q$, each of whose components is an $\ell$-edge path, such
that the residual graph
\begin{equation}
 R:=G-\bigcup_{i=1}^qE(F_i)
\end{equation}
satisfies
\begin{equation}\label{eq:packing-residual}
 \Delta(R)=o(n),\qquad \mu_c(R)=o(n).
\end{equation}
Each $F_i$ avoids the vertices in $W_i^1\cup W_i^2$ and the colors
in $R_i^1\cup R_i^2\cup R_i^3$.
\end{lemma}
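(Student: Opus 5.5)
We apply Lemma~\ref{lem:regularisation} and then Theorem~\ref{thm:egj} to the resulting hypergraph $\cH$. We take $k=3\ell+1$ and $D:=(1+o(1))\max_e d_{\cH}(E_e)=\Theta(n^\ell)$, adjusted by a factor $1+n^{-c}$ so that every degree is at most $D$. By \eqref{eq:degree-vertex} and $d(v)\le\Delta(G)\le 2Q(G)$, we get
\[
 d_{\cH}(V_{i,v})\le(1+o(1))D\,\frac{\ell+1}{\ell}\,\frac{Q(G)}{q\phi}\le(1+o(1))D\,\frac{1+1/\ell}{\phi(1+\eta/3)}\le(1-\xi)D .
\]
Similarly, $|E_\gamma|\le\nu(G)\le Q(G)$ and \eqref{eq:degree-color} give $d_{\cH}(C_{i,\gamma})\le(1-\xi)D$, using the slack \eqref{eq:slack}. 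Small color classes and reserved items have degree $o(D)$ or $0$. The codegree satisfies $\Delta_2(\cH)=O(n^{\ell-1})\le D^{1-\beta}$ for any $\beta<1/\ell$. The number of hyperedges and the number of weight functions are polynomial in $n$, hence at most $\exp(D^\zeta)$.

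A matching $\mathcal M$ in $\cH$ yields the forests. For each hyperedge $(i,P)\in\mathcal M$ we put $P$ into $F_i$. Disjointness of the records $E_e$ makes the forests edge-disjoint. Disjointness of the records $V_{i,v}$ makes the paths inside $F_i$ vertex-disjoint, so $F_i$ is a linear forest. Disjointness of the records $C_{i,\gamma}$ makes $F_i$ globally rainbow. Admissibility gives the avoidance of $W_i^1\cup W_i^2$ and $R_i^1\cup R_i^2\cup R_i^3$. It remains to control $R$, and for this we choose the weight functions. For each $v$, let $\omega_v(f)$ count the edges of the path of $f$ incident with $v$. For each color $\gamma$ with $|E_\gamma|\ge n^{1/2}$ (say), let $\omega_\gamma(f)$ be $1$ if the path uses $\gamma$. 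Then $\omega_v(E(\cH))=\sum_{e\ni v}d_{\cH}(E_e)=(1+o(1))d(v)D$, while $\max_f\omega_v(f)\le 2$. Hence the weight condition \eqref{eq:egj-weight-condition} holds with room, since $d(v)D\gg D^{1+\beta}$ for $\beta<1/\ell$. Theorem~\ref{thm:egj} gives $\omega_v(\mathcal M)=(1\pm D^{-\zeta})(1+o(1))d(v)$, so the residual degree is $d_R(v)=o(n)$. The color case is analogous: $\omega_\gamma(E(\cH))=(1+o(1))|E_\gamma|D$, so $|E_\gamma\cap E(R)|=o(|E_\gamma|)+o(n)=o(n)$. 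Colors with $|E_\gamma|<n^{1/2}$ trivially contribute $o(n)$.

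The main obstacle is that Theorem~\ref{thm:egj} only uses an upper bound $D$ on degrees, while the conclusion \eqref{eq:egj-conclusion} is normalized by $D$. We therefore need the edge-record degrees to be essentially equal to $D$, which is exactly \eqref{eq:degree-edge}. The indexed records have degree $\le(1-\xi)D$ and so cause no harm. The delicate point is bookkeeping the $(1+o(1))$ errors: the $o(1)$ terms in Lemma~\ref{lem:regularisation} must be polynomially small, or at least small enough, so that $\omega_v(\mathcal M)/d(v)\to1$ uniformly. If they are only $o(1)$, the conclusion is still $d_R(v)=o(n)$, which suffices. The result holds with probability $1-o(1)$ because the only randomness is in Lemma~\ref{lem:regularisation}.
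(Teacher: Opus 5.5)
Your proposal is correct and follows essentially the same route as the paper: fix $\cH$ from Lemma~\ref{lem:regularisation}, use \eqref{eq:slack} together with $d(v)/2\le Q(G)$ and $|E_\gamma|\le\nu(G)\le Q(G)$ to see that the maximum degree is attained, up to a factor $1+o(1)$, at the records $E_e$, and then apply Theorem~\ref{thm:egj} with the weights $\omega_v=d_P(v)$ and $\omega_\gamma=\mathbf 1_{\{\gamma\in\col(P)\}}$. One small correction: with the threshold $|E_\gamma|\ge n^{1/2}$, the color weights satisfy \eqref{eq:egj-weight-condition} only when $\ell\beta<1/2$, not for every $\beta<1/\ell$; this is why the paper takes $\beta=1/(4\ell)$.
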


\begin{proof}
On the event supplied by Lemma~\ref{lem:regularisation}, fix an auxiliary
hypergraph $\cH$ with the stated properties.
By \eqref{eq:slack}, \eqref{eq:degree-vertex}, and
\eqref{eq:degree-color}, every vertex of $\cH$ of the form
$V_{i,v}$ or $C_{i,\gamma}$ has degree at most
$(1-\xi/2)d_{\cH}(E_e)$ for every $e\in E(G)$, because
\begin{equation}
 \frac{d(v)}2\leq Q(G),
 \qquad |E_\gamma|\leq\nu(G)\leq Q(G).
\end{equation}
The same conclusion is immediate for reserved vertices and for colors
with fewer than $n^{1/2}$ edges. Put $\Delta:=\Delta(\cH)$. It follows
from \eqref{eq:degree-edge} that
\begin{equation}
 \Delta=\max_{e\in E(G)}d_{\cH}(E_e)=\Theta(n^\ell),
 \qquad d_{\cH}(E_e)=(1+o(1))\Delta
\end{equation}
uniformly over $e$.
We apply Theorem~\ref{thm:egj} with $k=3\ell+1$ and
$\beta=1/(4\ell)$. Then
\begin{equation}
 \Delta_2(\cH)=O(n^{\ell-1})
  \leq \Delta^{1-\beta}
\end{equation}
for large $n$.

For every host vertex $v$, define a weight on auxiliary edges by
$\omega_v(i,P)=d_P(v)\in\{0,1,2\}$. For every color $\gamma$ with
$|E_\gamma|\geq n^{1/2}$, define
$\omega_\gamma(i,P)=\mathbf1_{\{\gamma\in\col(P)\}}$. Since the
vertices $E_e$ have degree $(1+o(1))\Delta$,
\begin{equation}\label{eq:weight-totals}
 \omega_v(E(\cH))=(1+o(1))d(v)\Delta,
 \qquad
 \omega_\gamma(E(\cH))=(1+o(1))|E_\gamma|\Delta.
\end{equation}
Here the first identity counts each auxiliary path according to its
number of incidences with $v$, and the second uses that every path in
$\cP$ is rainbow. Since $\beta=1/(4\ell)$ and
$\Delta=\Theta(n^\ell)$, we have
$\Delta^{1+\beta}=\Theta(n^{\ell+1/4})$, whereas the vertex-weight
totals have order $n^{\ell+1}$ and the relevant color-weight totals
have order at least $n^{\ell+1/2}$. Thus every weight satisfies
\eqref{eq:egj-weight-condition}. The auxiliary edge set and the weight
family have polynomial size in $n$. Theorem~\ref{thm:egj} therefore
gives a matching $\mathcal M$ such that, simultaneously for all the
weights $\omega_v$ and $\omega_\gamma$ defined above,
\begin{equation}\label{eq:forest-packing-egj}
 \omega(\mathcal M)
  =(1\pm\Delta^{-\zeta})\frac{\omega(E(\cH))}{\Delta}.
\end{equation}

For $i\in[q]$, let $F_i$ be the union of the paths $P$ for which
$(i,P)\in\mathcal M$. Since $\mathcal M$ is a matching, no two selected
hyperedges contain the same $E_e$, so all selected path edges are
distinct. For each fixed $i$, no two selected hyperedges contain the
same $V_{i,v}$ or $C_{i,\gamma}$, so the paths in $F_i$ are
vertex-disjoint and their union is globally rainbow. Moreover, $F_i$
uses no vertex of $W_i^1\cup W_i^2$
and no color from $R_i^1\cup R_i^2\cup R_i^3$. Let $R$ be the graph
formed by the host edges not selected into any $F_i$. From
\eqref{eq:forest-packing-egj} and \eqref{eq:weight-totals},
\begin{equation}
 \Delta(R)=o(n),
 \qquad
 \mu_c(R)=o(n).
\end{equation}
Indeed, the vertex weights leave $o(n)$ incident edges at every vertex.
For each color $\gamma$ with $|E_\gamma|\geq n^{1/2}$, the color
weight leaves $o(n)$ of its edges in $R$. Every other color has at
most $n^{1/2}$ edges from the outset.
This proves the lemma.
\end{proof}

\subsection{Complete multipartite graphs}\label{subsec:dense-multipartite}

We first prove part~(ii). The next three lemmas decompose the residual
graph, provide connectors for the packed forests, and extend small rainbow
matchings in complete multipartite graphs.

\begin{lemma}[Residual matching decomposition]\label{lem:residual-matchings}
Let $(G,c)$ be properly edge-colored, let $J\subseteq G$, and put
\begin{equation}
 D:=\max\{\Delta(J),\mu_c(J)\}.
\end{equation}
Then $E(J)$ can be partitioned into at most $3D$ globally rainbow
matchings.
\end{lemma}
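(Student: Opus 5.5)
We will obtain the partition by a greedy coloring of an auxiliary conflict graph. Let $\Gamma$ be the graph with vertex set $E(J)$, in which two distinct edges $e,f\in E(J)$ are adjacent exactly when they share an endpoint or satisfy $c(e)=c(f)$. A proper vertex coloring of $\Gamma$ with $k$ colors is the same thing as a partition of $E(J)$ into $k$ classes. In each class, no two edges share a vertex and no two edges share a color. So each class is a globally rainbow matching. It therefore suffices to show $\chi(\Gamma)\le 3D$, and we will do this by bounding the maximum degree of $\Gamma$.

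Fix an edge $e=xy\in E(J)$ and count its neighbours in $\Gamma$.
\begin{itemize}
\item The other edges of $J$ at $x$ number at most $\Delta(J)-1\le D-1$, and likewise at $y$.
\item The other edges of $J$ with color $c(e)$ number at most $|E_{c(e)}\cap E(J)|-1\le\mu_c(J)-1\le D-1$.
\end{itemize}
Properness of $c$ means an edge sharing an endpoint with $e$ has a different color from $e$, so these three sets are pairwise disjoint. (Disjointness is not actually needed for an upper bound.) Hence $\Delta(\Gamma)\le 3(D-1)=3D-3$.

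The greedy algorithm colors $\Gamma$ with at most $\Delta(\Gamma)+1\le 3D-2\le 3D$ colors. Discarding empty classes, we get at most $3D$ globally rainbow matchings partitioning $E(J)$. If $J$ has no edges, then $D=0$ and the empty partition works, so the statement holds trivially in that case.

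There is no real obstacle. The only point needing care is the definition of $\Gamma$: the adjacencies must encode both the vertex-disjointness requirement and the distinct-color requirement, so that independent sets in $\Gamma$ are exactly the globally rainbow matchings. Once that is set up, the degree count above is immediate.
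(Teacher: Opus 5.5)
Your proposal is correct and matches the paper's proof: you build the same conflict graph on $E(J)$ (edges adjacent if they share an endpoint or a color) and bound its maximum degree by $3D-3$. Greedy coloring with at most $3D-2$ colors then gives at most $3D$ globally rainbow matchings, with the case $D=0$ handled separately exactly as in the paper.
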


\begin{proof}
The assertion is immediate when $D=0$, so assume $D\geq1$.
Form a conflict graph on $E(J)$ by joining two edges if they share an
endpoint or a color. Its maximum degree is at most $3D-3$. A greedy
coloring uses at most $3D-2$ colors, so its color classes form at most
$3D$ independent sets. Each independent set is a matching whose edges
have pairwise distinct colors.
\end{proof}

We next record the reserve property used to join the components of the
packed forests.
Recall that, for each $i\in[q]$, $W_i^1,W_i^2$ are the reserved vertex
sets and $R_i^1,R_i^2,R_i^3$ are the reserved color sets chosen in
Subsection~\ref{subsec:forest-packing}.

\begin{lemma}[Connector reserves]\label{lem:connector-reserves}
Suppose that $G$ is complete multipartite. With probability $1-o(1)$,
the following assertions hold simultaneously for every $i\in[q]$.
\begin{enumerate}
\item If distinct $x,y$ lie in the same part $A$, then at least
$\sigma\tau^2(n-|A|)/2$ vertices $z\notin A$ satisfy
\begin{equation}\label{eq:two-edge-reserve}
 z\in W_i^1,\qquad c(xz)\in R_i^1,\qquad c(zy)\in R_i^2.
\end{equation}
\item If $x,y$ lie in different parts $A,B$, then at least
$\sigma\tau^2(n-|A|-|B|)/2$ vertices $z\notin A\cup B$ satisfy
\eqref{eq:two-edge-reserve}.
\item If $A,B$ are distinct parts and $x\in A$, then at least
$\sigma\tau|B|/2$ vertices $u\in B$ satisfy
\begin{equation}\label{eq:first-three-edge-reserve}
 u\in W_i^1,\qquad c(xu)\in R_i^1.
\end{equation}
For every such $u$ and every $y\in B\setminus\{u\}$, at least
$\sigma\tau^2|A|/2$ vertices $v\in A$ satisfy
\begin{equation}\label{eq:second-three-edge-reserve}
 v\in W_i^2,\qquad c(uv)\in R_i^2,
 \qquad c(vy)\in R_i^3.
\end{equation}
\end{enumerate}
Each assertion is required only when its corresponding candidate set,
namely $V(G)\setminus A$, $V(G)\setminus(A\cup B)$, $B$, or $A$, has
size at least $\alpha n/2$.
\end{lemma}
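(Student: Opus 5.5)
The plan is a first-moment computation followed by Chernoff's inequality \eqref{eq:chernoff-standard} and a union bound over all indices $i\in[q]$ and all relevant tuples of host vertices and parts. There are $q=O(n)$ indices and $O(n^2)$ pairs $x,y$, and the quantifiers over parts are determined by $x,y$ or by $x$ together with $B$. So the union bound ranges over polynomially many events, and each failure probability of the form $\exp(-\Omega(n))$ suffices.

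For (1), fix $i$ and distinct $x,y$ in a part $A$. For each $z\notin A$, the edges $xz$ and $zy$ exist because $G$ is complete multipartite. They have distinct colors, since they are adjacent and $c$ is proper. Let $I_z$ be the indicator of \eqref{eq:two-edge-reserve}. Its probability is exactly $\sigma\tau^2$, because the events $z\in W_i^1$, $c(xz)\in R_i^1$ and $c(zy)\in R_i^2$ concern one vertex and two distinct colors, and these are assigned independently.

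The indicators are not independent: the color $c(xz)$ may coincide with $c(z'y)$ for some $z'\neq z$. However, each color appears at most once at $x$ and at most once at $y$. So every color-event is shared by at most two indicators, and the vertex events are distinct for distinct $z$. The sum $X=\sum_z I_z$ is therefore a function of independent assignments in which each variable affects $X$ by at most $2$. The bounded-differences inequality \eqref{eq:mcdiarmid-standard} applies with $\sum c_i^2=O(n)$. Since $\E X=\sigma\tau^2(n-|A|)\geq\sigma\tau^2\alpha n/2$, the probability that $X<\E X/2$ is $\exp(-\Omega(n))$. Alternatively, one can use Janson's inequality \eqref{eq:janson-standard}, whose dependency sum $\Delta$ is $O(n)$. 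Part (2) is identical, with $z$ ranging over $V(G)\setminus(A\cup B)$.

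For (3), the first assertion needs only one vertex event and one color event per $u\in B$. The colors $c(xu)$ are all distinct, so the indicators are fully independent, each with mean $\sigma\tau$, and Chernoff gives at least $\sigma\tau|B|/2$ such $u$. For the second assertion, fix $u\neq y$ in $B$ and count $v\in A$ with \eqref{eq:second-three-edge-reserve}. The colors $c(uv)$ and $c(vy)$ are distinct, and the events use $W_i^2$, $R_i^2$ and $R_i^3$. These are disjoint from the classes $W_i^1$ and $R_i^1$ used for $u$, but they are not independent of the event $u\in W_i^1$ and $c(xu)\in R_i^1$: a color is placed in exactly one class.

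To avoid the conditioning issue, I would prove the bound for every pair $u\neq y$ in $B$ and every $x$, without conditioning on $u$ being good. Exclude the at most one $v$ with $c(uv)=c(xu)$; this is impossible anyway, since $uv$ and $xu$ share $u$ and $c$ is proper. Also exclude the at most one $v$ with $c(vy)=c(xu)$. This costs $O(1)$ candidates. For every remaining $v$, the three tested colors and the vertex $v$ differ from the items tested for $u$, so the conditional probability given the event for $u$ is still $\sigma\tau^2$.

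The count over $v$ is then handled exactly as in (1): each color is shared by at most two indicators, and bounded differences gives at least $\sigma\tau^2|A|/2$ good vertices with probability $1-\exp(-\Omega(n))$, because $|A|\geq\alpha n/2$ and the $O(1)$ loss is absorbed. A union bound over $i,x,u,y$ and the parts, which is polynomially many events, finishes the proof.

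The main obstacle is the dependence in (1) and (2), and in the second half of (3), created by colors shared between the two edge families. Because $c$ is proper, each color is shared by at most two candidates, and this keeps the martingale or Janson error terms linear in $n$.
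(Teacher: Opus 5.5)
Your proof is correct and has the same structure as the paper's. The mean is exactly $\sigma\tau^2|Z|$, because $c(xz)\neq c(zy)$ by properness at $z$. Properness at $x$ and at $y$ means each color is tested by at most two indicators. A union bound over the polynomially many choices of index, vertices and parts then finishes. The only real difference is the concentration step:
\begin{itemize}
\item The paper treats the indicators as a dependency graph of maximum degree two. It splits this graph into three nearly equal classes of mutually independent indicators and applies Chernoff to each class.
\item You apply the bounded-differences inequality \eqref{eq:mcdiarmid-standard} directly to the independent vertex and color assignments. Each vertex variable has influence $1$ and each color variable influence at most $2$, so $\sum_i c_i^2=O(n)$.
\end{itemize}
Your route is slightly cleaner. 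It needs no equitable coloring and no check of independence within classes. It also handles the multi-valued assignments (a color lies in exactly one of $R_i^1,R_i^2,R_i^3$ or in none) with no extra work. The paper's route uses only Chernoff.

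Two small remarks. First, your Janson alternative does not apply as stated. Inequality \eqref{eq:janson-standard} requires products of \emph{independent} Bernoulli variables. Here the events $\{\gamma\in R_i^1\}$ and $\{\gamma\in R_i^2\}$ for one color are mutually exclusive, and a color can be tested in both roles when $c(xz)=c(z'y)$. So keep the bounded-differences argument.

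Second, the conditioning issue in item~3 disappears once you prove the bound over $v\in A$ for every pair $u\neq y$ in $B$ simultaneously. The union bound then covers every $u$ satisfying \eqref{eq:first-three-edge-reserve} in particular. This is exactly what the paper does when it says the argument applies ``whether or not $u$ satisfies'' that condition. Fixing $x$, excluding the $v$ with $c(vy)=c(xu)$, and computing probabilities conditional on the event for $u$ are therefore unnecessary, though harmless. Also, only two colors and one vertex are tested for each $v$, not three colors.
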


\begin{proof}
Fix $i,x,y$ and a candidate set $Z$, and let $I_z$ be the indicator of
\eqref{eq:two-edge-reserve} for $z\in Z$. Then
$\E I_z=\sigma\tau^2$. Along each of the stars centered at $x$ and $y$,
the colors are injective. Hence every $I_z$ depends on at most two other
indicators: dependence can arise only from a cross-equality between the
two edge colors. The resulting dependency graph has maximum degree at
most two. Its path and cycle components admit a proper three-coloring
whose color classes differ in size by at most one. Within each class the
indicators are independent, so Chernoff's inequality shows that, except
with probability $\exp(-\Omega(n))$, their total is at least
$\sigma\tau^2|Z|/2$ whenever $|Z|\geq\alpha n/2$.

The indicators in \eqref{eq:first-three-edge-reserve} are independent.
For the second estimate in item~3, first fix $i,A,B,u,y$. The same
dependency-graph argument over $v\in A$ applies, whether or not $u$
satisfies \eqref{eq:first-three-edge-reserve}. Intersecting the two
estimates gives item~3. There are $O(n^4)$ choices of the index, parts,
and prescribed vertices. A union bound proves all three assertions
simultaneously.
\end{proof}

Finally, we show that small rainbow matchings admit a stronger completion.

\begin{lemma}[Matching extension]\label{lem:matching-extension}
For every fixed $\alpha>0$ and all sufficiently large $n$, the
following holds. If $G$ is a complete multipartite graph on $n$
vertices with $\delta(G)\geq\alpha n$, then every rainbow matching of
size at most
\begin{equation}
 t_0:=\left\lfloor\frac{\alpha n}{100}\right\rfloor
\end{equation}
is contained in a rainbow simple path of $G$.
\end{lemma}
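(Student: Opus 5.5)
We prove the statement by a greedy chaining argument. Let $N=\{a_1b_1,\ldots,a_tb_t\}$ be a rainbow matching with $t\leq t_0$. We build a rainbow path that traverses $a_1b_1$, then a short connector from $b_1$ to $a_2$, then $a_2b_2$, and so on. Each connector has one or two edges. At each stage we keep track of a forbidden vertex set $U$ and a forbidden color set $C$. The set $U$ consists of all vertices of $N$ together with all internal connector vertices used so far, and $C$ consists of all colors of $N$ together with all colors used on connectors so far. Since each connector adds at most one internal vertex and two colors, throughout the process
\begin{equation}
 |U|\leq 3t\leq \frac{3\alpha n}{100},\qquad |C|\leq 3t\leq\frac{3\alpha n}{100}.
\end{equation}
The orientation of each matching edge is free. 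We may also reorder the matching edges, and we use this freedom when convenient.

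To join the current endpoint $x=b_j$ to the next start $y=a_{j+1}$, the cheapest option is the direct edge $xy$. This works when $x$ and $y$ lie in different parts and $c(xy)\notin C$. Otherwise we look for a vertex $z\notin U$ that is adjacent to both $x$ and $y$, with $c(xz),c(zy)\notin C$ and $c(xz)\neq c(zy)$. The last condition is automatic, because $xz$ and $zy$ share the vertex $z$ and the coloring is proper. The common neighbours of $x$ and $y$ are exactly the vertices outside the parts of $x$ and $y$. We count them as follows.
\begin{enumerate}
\item If $x,y$ lie in the same part $A$, there are $n-|A|\geq\delta(G)\geq\alpha n$ common neighbours.
\item If $x,y$ lie in different parts $A,B$, there are $n-|A|-|B|$ common neighbours, and this number can be small, for instance in a near-bipartite graph.
\end{enumerate}

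In case (1), or in case (2) when $n-|A|-|B|\geq\alpha n/2$, we exclude the vertices of $U$. We also exclude the $z$ with $c(xz)\in C$, of which there are at most $|C|$ by properness at $x$, and similarly at $y$. This removes at most $9\alpha n/100<\alpha n/2$ candidates, so a valid $z$ exists.

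The main obstacle is case (2) when $|A|+|B|>n-\alpha n/2$, where few common neighbours exist. Here the plan is to use the direct edge $xy$ and to avoid its color being forbidden by choosing the pairing cleverly. We have freedom in choosing the orientation of $a_{j+1}b_{j+1}$ and which unused matching edge comes next. The only obstruction is that $c(xy)\in C$. For a fixed $x$, at most $|C|$ vertices $y$ have $c(xy)\in C$.

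We therefore argue as follows. If the number of remaining matching edges is larger than $|C|$, some choice of next edge and endpoint works directly. Otherwise we fall back on a three-edge connector $x\,u\,v\,y$ with $u\in B$ and $v\in A$, built in the spirit of item~3 of Lemma~\ref{lem:connector-reserves} but deterministically. We choose $u\in B\setminus U$ with $c(xu)\notin C$. Then we choose $v\in A\setminus U$ with $c(uv),c(vy)\notin C$ and $c(uv)\neq c(xu)$. Both choices are possible because $|A|,|B|\geq\alpha n$ in the near-bipartite situation: each of $A$ and $B$ has size at least $\delta(G)\geq\alpha n$ whenever the other dominates. Each such connector removes only $O(1)$ candidates, and the excluded counts remain at most about $15\alpha n/100$.

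Hence in all cases a connector of length at most three exists. The bounds on $|U|$ and $|C|$ should be adjusted to $5t$, which is still well below $\alpha n/2$. Iterating over all matching edges gives a rainbow simple path containing $N$.
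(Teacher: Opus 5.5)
Your argument is essentially the paper's: greedy chaining of the matching edges, with two-edge connectors $xzy$ through common neighbours whenever there are at least $\alpha n/2$ of them. When $n-|A|-|B|<\alpha n/2$ you use three-edge connectors $xuvy$ with $u\in B$, $v\in A$, and the properness-based exclusion counts are the same. The direct-edge/reordering option is harmless, since every matching edge has an endpoint outside the part of $x$. It is also unnecessary: the paper never uses direct edges, because the three-edge connector always works in the near-bipartite case.

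Two points need correcting.

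First, in the three-edge connector the condition $c(uv)\neq c(xu)$ is automatic by properness at $u$. The pair that actually needs checking is the two non-adjacent outer edges, so you must impose $c(vy)\neq c(xu)$, unless you explicitly add $c(xu)$ to $C$ before choosing $v$. As written, the connector could repeat a color. The fix costs at most one further excluded $v$, by properness at $y$; this is the extra $+1$ in the paper's count.

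Second, the claim $|A|,|B|\geq\alpha n$ is false in general. Take parts of sizes $0.6n$, $0.35n$, $0.05n$ and $\alpha=0.4$, so $\delta(G)=0.4n=\alpha n$. The two large parts are in your near-bipartite case, since $r=0.05n<\alpha n/2$, yet the smaller of them has size $0.35n<\alpha n$. What does hold is $|B|=(n-|A|)-r\geq\delta(G)-r>\alpha n/2$, and symmetrically for $A$. At most $|U|+2|C|+1\leq 15t+1<\alpha n/2$ candidates are excluded, so this weaker bound suffices, exactly as in the paper.
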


\begin{proof}
Put $\delta:=\delta(G)$, and let $A_0$ be a largest part, so
$\delta=n-|A_0|$. We first record the potential connectors between two
distinct vertices $x,y$. If they lie in the same part $A$, then
every $z\in V(G)\setminus A$ gives a potential connector $xzy$, and
there are $n-|A|\geq\delta$ such potential connectors.

Suppose instead that $x,y$ lie in distinct parts $A,B$, respectively,
and put $r=n-|A|-|B|$. Then
\begin{equation}\label{eq:connector-dichotomy}
 r+|B|=n-|A|\geq\delta,
 \qquad
 r+|A|=n-|B|\geq\delta.
\end{equation}
If $r\geq\delta/2$, every vertex outside $A\cup B$ gives a potential
connector $xzy$, so there are at least $\delta/2$ such potential
connectors. Otherwise, \eqref{eq:connector-dichotomy} gives
$|A|,|B|>\delta/2$, and we take as potential connectors the paths
$xuvy$ with $u\in B$ and $v\in A$.

Let the matching have $t\leq t_0$ edges. The assertion is immediate
when $t\leq1$, so suppose that $t\geq2$. Order and orient its edges,
and join consecutive edges greedily, each time choosing one of the
potential connectors described above. Declare all matching vertices
and matching colors used from the outset. At any stage, let $U$
consist of these vertices together with the internal vertices of the
connectors already chosen, and let $\mathcal C$ consist of the
matching colors together with the colors of the connectors already
chosen. Since fewer than $t$ connectors have been chosen,
\begin{equation}\label{eq:used-connector-resources}
 |U|<4t,
 \qquad
 |\mathcal C|<4t.
\end{equation}

Suppose first that the current endpoints admit potential two-edge
connectors $xzy$. We must exclude those for which $z\in U$,
$c(xz)\in\mathcal C$, or $c(zy)\in\mathcal C$. Properness implies
that each color in $\mathcal C$ excludes at most one choice for $z$
at each endpoint. Thus fewer than
$|U|+2|\mathcal C|<12t$ potential connectors are excluded. There are
at least $\delta/2$ potential connectors, while
\begin{equation}
 12t\leq12t_0<\frac{\alpha n}{2}\leq\frac{\delta}{2}.
\end{equation}
Hence an available potential connector remains.

Suppose next that the current endpoints admit potential three-edge
connectors $xuvy$, where $u\in B$ and $v\in A$. First choose
$u\in B\setminus U$ with $c(xu)\notin\mathcal C$. Fewer than
$|U|+|\mathcal C|<8t$ choices for $u$ are excluded. Moreover,
$8t\leq8t_0<\alpha n/2<|B|$, so such a vertex $u$ exists. Having fixed
$u$, choose $v\in A\setminus U$ such that
\begin{equation}
 c(uv),c(vy)\notin\mathcal C,
 \qquad
 c(vy)\neq c(xu).
\end{equation}
Properness implies that each color in $\mathcal C$ excludes at most
one choice for $v$ through the edge $uv$ and at most one choice
through the edge $vy$. The last condition excludes at most one
additional choice. Hence fewer than
$|U|+2|\mathcal C|+1<12t+1$ choices for $v$ are excluded. Since
$|A|>\delta/2\geq\alpha n/2$, whereas
$12t_0+1<\alpha n/2$ for all sufficiently large $n$, such a vertex
$v$ exists. Thus an available potential connector again remains.

Choose an available potential connector and add it to the path.
Properness makes adjacent connector edges differently colored. For
a three-edge connector, the condition $c(vy)\neq c(xu)$ ensures that
its two non-adjacent outer edges also have different colors.
Therefore each step preserves simplicity and the rainbow property.
Repeating the argument joins all matching edges into a rainbow simple
path.
\end{proof}

\begin{proof}[Proof of Theorem~\ref{thm:linear-minimum-degree}(ii)]
It is enough to consider $0<\varepsilon\leq1$. Set $\eta=\varepsilon$
and choose $\sigma,\tau,\ell$ as in
\eqref{eq:hierarchy}--\eqref{eq:slack}. The events in
Lemmas~\ref{lem:forest-packing} and~\ref{lem:connector-reserves} each
have probability $1-o(1)$. Fix a reserve realization for which both
events occur, and choose the resulting globally rainbow linear forests
$F_1,\ldots,F_q$ and residual graph $R$ satisfying
\eqref{eq:packing-residual}.

Put $\delta:=\delta(G)$. Each $F_i$ has at most $n/(\ell+1)$
components. Order and orient its non-empty components. Suppose that
the current endpoint $x$ and the initial vertex $y$ of the next
component lie in parts $A$ and $B$. If $A=B$, then
$n-|A|\geq\delta$, so we use a connector $xzy$ supplied by
Lemma~\ref{lem:connector-reserves}. Suppose that $A\neq B$ and put
$r=n-|A|-|B|$. If $r\geq\delta/2$, we again use $xzy$. If
$r<\delta/2$, then \eqref{eq:connector-dichotomy} gives
$|A|,|B|>\delta/2$, and we use $xuvy$, where $u\in B$ and $v\in A$,
through \eqref{eq:first-three-edge-reserve} and
\eqref{eq:second-three-edge-reserve}. Thus every set from which an
internal vertex is chosen has size at least
$\delta/2\geq\alpha n/2$.

The internal connector vertices lie in $W_i^1\cup W_i^2$, which is
disjoint from $V(F_i)$, while their colors lie in the pairwise disjoint
sets $R_i^1,R_i^2,R_i^3$, which avoid the colors of $F_i$. At a given
stage, let $U$ be the set of internal connector vertices already used and
let $\mathcal C$ be the set of connector colors already used. Since
fewer than $n/(\ell+1)$ connectors are needed,
\begin{equation}
 |U|<\frac{2n}{\ell+1},
 \qquad
 |\mathcal C|<\frac{3n}{\ell+1}.
\end{equation}
For a connector $xzy$, the loss is at most
$|U|+2|\mathcal C|$. For a connector $xuvy$, the choice of $u$ loses at
most $|U|+|\mathcal C|$ candidates, and, after $u$ is fixed, the choice
of $v$ loses at most $|U|+2|\mathcal C|+1$ candidates. Here properness
ensures that a forbidden color excludes at most one candidate at each
fixed endpoint, and the final term excludes equality of the two outer
colors. Thus every greedy choice excludes fewer than
\begin{equation}\label{eq:multipartite-connector-loss}
 \frac{8n}{\ell+1}+1
\end{equation}
candidates. Each available reserve pool has size at least
$\sigma\tau^2\alpha n/4$. By
\eqref{eq:hierarchy}, this is larger than
\eqref{eq:multipartite-connector-loss}. We may therefore choose all
connectors so that their internal vertices and colors are new. Hence
every non-empty $F_i$ is contained in one rainbow simple path, using
at most $q$ paths in total.

It remains to cover $R$. Put
$D_R:=\max\{\Delta(R),\mu_c(R)\}=o(n)$. By
Lemma~\ref{lem:residual-matchings}, $E(R)$ can be partitioned into at most
$3D_R$ globally rainbow matchings. Split each of them into blocks of
size at most $t_0$. Since $t_0\geq\alpha n/200$ for sufficiently
large $n$ and $e(R)\leq nD_R/2$, the total number of blocks is at most
\begin{equation}\label{eq:multipartite-residual-blocks}
 3D_R+\frac{e(R)}{t_0}
 \leq\left(3+\frac{100}{\alpha}\right)D_R=o(n).
\end{equation}
Lemma~\ref{lem:matching-extension} extends every block to a rainbow simple
path. The extra edges used by different extensions may overlap each other
or the paths already constructed; this is permitted in an edge cover.
Finally, $Q(G)\geq\alpha n/2$, so $o(n)=o(Q(G))$. Equations
\eqref{eq:q} and \eqref{eq:Q-linear} give
\begin{equation}
 q+o(n)
 \leq\left(1+\frac\eta3\right)Q(G)+1+o(n)
 \leq(1+\eta)Q(G).
\end{equation}
This proves part~(ii).
\end{proof}

\subsection{Packing with forbidden sets}
\label{subsec:robust-tools}

We now prepare the packing used for arbitrary dense graphs. We first prove
that the required short rainbow paths remain available after prescribed
vertices and colors are excluded. We then obtain the main packing result.

\begin{lemma}[Many short rainbow paths]\label{lem:robust-connections}
Fix constants $d>a>0$ and $b>0$. There are constants
$L=L(d,a,b)$ and $\zeta=\zeta(d,a,b)>0$, and an integer
$n_1=n_1(d,a,b)$, with the following property. Suppose that
$n\ge n_1$ and that $H$ is a properly edge-colored graph with
\begin{equation}
|V(H)|\le n,\qquad \delta(H)\ge dn,
\end{equation}
and suppose that every set $S\subseteq V(H)$ with
$|S|,|V(H)\setminus S|\geq an$ satisfies
\begin{equation}
e_H(S,V(H)\setminus S)\ge bn^2.
\label{eq:robust-cut}
\end{equation}
Then, for every pair of distinct vertices $x,y\in V(H)$, there is an integer
$h=h(x,y)$ with $2\le h\le L$ for which $H$ contains at least
\begin{equation}
\zeta n^{h-1}
\label{eq:many-connections}
\end{equation}
rainbow simple $x$--$y$ paths of length $h$.
\end{lemma}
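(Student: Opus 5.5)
The plan is an iterated-expansion argument for the sets of vertices that $x$ reaches through many short rainbow paths. By pigeonhole we may always pass to a single length, and the robust cut condition \eqref{eq:robust-cut} lets these sets grow linearly at each step. Put $N:=|V(H)|$, so $dn\le N\le n$.

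For $k\ge1$ and a constant $\zeta_k>0$ (to be chosen decreasing in $k$), let $S_k=S_k(x)$ be the set of vertices $v\ne x$ such that, for some $1\le j\le k$, there are at least $\zeta_k n^{j-1}$ rainbow simple $x$--$v$ paths of length $j$. Take $\zeta_1=1$. Then $S_1\supseteq N(x)$, so $|S_1|\ge dn>an$.

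We need two routine counting facts. Fix $j$, $u$, a vertex $w\neq x$, and a color $\gamma$.
\begin{enumerate}
\item[(a)] The number of $x$--$u$ paths of length $j$ passing through $w$ is at most $jn^{j-2}$.
\item[(b)] The number of such paths using color $\gamma$ is at most $jn^{j-2}$. This holds by properness: once the position of the $\gamma$-edge and the vertex preceding it are fixed, that edge is determined.
\end{enumerate}

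The expansion step goes as follows. Suppose $an\le|S_k|\le N-an$. By \eqref{eq:robust-cut} there are at least $bn^2$ edges between $S_k$ and its complement. Since $|V(H)|\le n$, averaging gives at least $bn/2$ vertices $w\notin S_k\cup\{x\}$ with at least $bn/2$ neighbours in $S_k$; discarding $x$ costs only one vertex. Fix such a $w$. Each neighbour $u\in S_k$ comes with a length $j_u\le k$. By pigeonhole, at least $bn/(2k)$ of these neighbours share a common length $j$. For each such $u$, take the $\ge\zeta_kn^{j-1}$ rainbow $x$--$u$ paths and extend them by the edge $uw$. By (a) and (b), at most $2jn^{j-2}=o(n^{j-1})$ of these paths contain $w$ or use the color $c(uw)$. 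All other extensions are rainbow simple $x$--$w$ paths of length $j+1$, and they are distinct for distinct $u$ because their last edges differ. Hence $w$ has at least
\[
\tfrac{b}{2k}\cdot\tfrac{\zeta_k}{2}\,n^{j}
\]
such paths. So if we set $\zeta_{k+1}:=b\zeta_k/(4k)$, then $w\in S_{k+1}$. We also have $S_k\subseteq S_{k+1}$, since $\zeta_{k+1}\le\zeta_k$. Therefore $|S_{k+1}|\ge|S_k|+bn/2$.

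Since $|S_k|\le n$, after at most $K:=\lceil 2/b\rceil$ steps we reach $k\le K$ with $|S_k|>N-an$.

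To close the path, count $y$'s neighbours in $S_k\setminus\{y\}$. There are at least $dn-an-1\ge(d-a)n/2$ of them. By pigeonhole, at least $(d-a)n/(2k)$ of them share a common length $j$. Extend each of their paths by the edge to $y$, discarding as before the $o(n^{j-1})$ paths that contain $y$ or repeat the color of the final edge. This yields at least $\frac{(d-a)\zeta_k}{4k}n^{j}$ rainbow simple $x$--$y$ paths of length $h=j+1$, with $2\le h\le K+1$. So the lemma holds with $L=K+1$ and $\zeta=\min_{k\le K}(d-a)\zeta_k/(4k)$, which depends only on $(d,a,b)$.

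The main obstacle is the bookkeeping that the lengths vary with the endpoint. This is exactly why $S_k$ is defined with a free length $j\le k$ and why pigeonhole is applied over the $\le k$ lengths at every step, at a constant-factor cost. The secondary point is the error bounds (a) and (b), where properness is essential: without it, a fixed color could appear in $\Theta(n^{j-1})$ paths. The hypothesis $d>a$ is used only in the closing step, to guarantee that $y$ has linearly many neighbours inside $S_k$.
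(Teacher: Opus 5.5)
Your proposal is correct and follows essentially the same route as the paper: grow the set of vertices that $x$ reaches through many short connections by applying the robust-cut condition, pigeonhole over the boundedly many lengths with geometrically shrinking constants, stop after $O(1/b)$ rounds, and close through $N(y)$ using $d>a$. The only real difference is bookkeeping. The paper counts walks and removes those with a repeated vertex or colour once at the end, whereas you keep rainbow simple paths throughout and discard the $O(n^{j-2})$ bad extensions at each step. Your small slips affect only the constants: you gain $bn/2-1$ rather than $bn/2$ new vertices per round because $x$ may be among them, the final index may be off by one (and hence so is $L$), and fact (b) is false as stated for $j=1$, although it is never needed there because properness makes the relevant count zero.
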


\begin{proof}
Set $T:=\lceil2/b\rceil$ and $L:=T+2$. Define constants
$\kappa_0,\ldots,\kappa_T$ in advance by $\kappa_0=1$ and
\begin{equation}
 \kappa_{t+1}:=\min\left\{\kappa_t,
              \frac{b\kappa_t}{2(t+1)}\right\}
 \qquad(0\leq t<T).
\end{equation}
We construct increasing sets $U_0,\ldots,U_T$. At stage $t$, assign to
every $u\in U_t$ a length
$\ell(u)\in\{1,\ldots,t+1\}$ such that $u$ is the end of at least
\begin{equation}
 \kappa_t n^{\ell(u)-1}
\end{equation}
$x$--$u$ walks of length $\ell(u)$. For $t=0$, put $U_0=N_H(x)$ and
set $\ell(u)=1$ for every $u\in U_0$. Since
$|U_0|\geq dn>an$, the required property holds.

Suppose that $U_t$ has been constructed and
$|V(H)\setminus U_t|\geq an$. Since $U_t\supseteq U_0$, we also have
$|U_t|>an$, so \eqref{eq:robust-cut} applies. Define
\begin{equation}
 Z_t:=\{z\in V(H)\setminus U_t:
              d_H(z,U_t)\geq bn/2\}.
\end{equation}
Then
\begin{equation}
 bn^2\leq e_H(U_t,V(H)\setminus U_t)
        \leq |Z_t|n+bn^2/2,
\end{equation}
and hence $|Z_t|\geq bn/2$. For each $z\in Z_t$, pigeonhole among the at
most $t+1$ values of $\ell(u)$ on its neighbors in $U_t$. Choose a
value $q(z)\leq t+1$ for which this gives at least
\begin{equation}
 \frac{b\kappa_t}{2(t+1)}n^{q(z)}
\end{equation}
$x$--$z$ walks of length $q(z)+1$. Set
$U_{t+1}=U_t\cup Z_t$, retain the old assignments on $U_t$, and put
$\ell(z)=q(z)+1$ for $z\in Z_t$. The definition of $\kappa_{t+1}$
preserves the required lower bound for every vertex in $U_{t+1}$.

Every non-terminal step adds at least $bn/2$ vertices. If the first $T$
steps were all non-terminal, then
$|U_T|\geq|U_0|+Tbn/2>n$, a contradiction. Hence, for some $t\leq T$,
we obtain $|V(H)\setminus U_t|<an$. Therefore
\begin{equation}
 |N_H(y)\cap U_t|
  \geq d_H(y)-|V(H)\setminus U_t|>(d-a)n.
\end{equation}
Pigeonholing the values of $\ell(u)$ in this intersection and appending
the edge $uy$ gives, for some $2\leq h\leq t+2\leq L$, at least
$c n^{h-1}$ $x$--$y$ walks of length $h$, where
$c:=\min_{0\leq s\leq T}(d-a)\kappa_s/(s+1)>0$.

For fixed $h$, the walks with a repeated vertex number
$O_h(n^{h-2})$. After deleting them, any two edges of the same color
occupy non-consecutive positions. Fix these positions and choose all
internal vertices except one endpoint of the latter edge. The first
edge determines the color, and properness gives at most one choice for
the omitted vertex from the other endpoint of the latter edge. Thus the
walks with a repeated color also number $O_h(n^{h-2})$. Since
$h\leq L$, choosing $\zeta<c/2$ and then $n_1$ sufficiently large
proves \eqref{eq:many-connections}.
\end{proof}

\begin{lemma}[Short rainbow paths in reserves]
\label{lem:reserved-robust-connections}
Fix constants $d>a>0$, $b>0$, and $0<\sigma,\tau<1$, and let
$L=L(d,a,b)$ be given by Lemma~\ref{lem:robust-connections}. There are
constants $r_1=r_1(d,a,b,\sigma,\tau)>0$ and
$\zeta_*=\zeta_*(d,a,b,\sigma,\tau)>0$ such that the
following holds for every $0\le r\le r_1$ and all sufficiently large $n$.
Let $H$ be a properly edge-colored graph satisfying
\begin{equation}
 |V(H)|\le n,\qquad \delta(H)\ge dn,
\end{equation}
and suppose that every set $S\subseteq V(H)$ with
$|S|,|V(H)\setminus S|\ge an$ satisfies
\begin{equation}
 e_H(S,V(H)\setminus S)\ge bn^2.
\end{equation}

Let $A\subseteq V(H)$ and $B\subseteq\col(H)$ satisfy
\begin{equation}
 |A|+|B|\le rn.
 \label{eq:forbidden-load}
\end{equation}
Choose $W\subseteq V(H)\setminus A$ by including each vertex of
$V(H)\setminus A$ independently with probability $\sigma$. Independently,
choose $R\subseteq\col(H)\setminus B$ by including each color of
$\col(H)\setminus B$ with probability $\tau$.

Then, with probability $1-\exp(-\Omega(n))$, for every pair of distinct
vertices $x,y\in V(H)$ there is an integer $h$ with $2\le h\le L$ such
that $H$ contains at least
\begin{equation}
 \zeta_*n^{h-1}
 \label{eq:reserved-connections}
\end{equation}
rainbow $x$--$y$ paths of length $h$ whose internal vertices lie in
$W$ and whose colors lie in $R$.

We shall also use the following simultaneous version. Let
$H_1,\ldots,H_k$ be a bounded collection of pairwise vertex-disjoint
graphs, each satisfying the hypotheses above, and let $(A_t,B_t)$,
$t\in\mathcal T$, be $O(n)$ prescribed pairs satisfying
\eqref{eq:forbidden-load}. For each $t$, choose global reserves
$W_t\subseteq(\bigcup_{j=1}^kV(H_j))\setminus A_t$ and
$R_t\subseteq(\bigcup_{j=1}^k\col(H_j))\setminus B_t$ by the same
independent sampling rule. Then, with probability
$1-\exp(-\Omega(n))$, the conclusion holds for every $t\in\mathcal T$,
every $j\in[k]$, and every pair of distinct vertices $x,y\in V(H_j)$,
using the restrictions of $W_t$ and $R_t$ to $H_j$.
\end{lemma}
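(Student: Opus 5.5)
The plan is to start from Lemma~\ref{lem:robust-connections} and thin its paths using the forbidden sets and the random reserves. Fix distinct $x,y\in V(H)$. Lemma~\ref{lem:robust-connections} supplies $h=h(x,y)\in[2,L]$ and a family $\mathcal F_{x,y}$ of at least $\zeta n^{h-1}$ rainbow $x$--$y$ paths of length $h$.

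\emph{Removing the forbidden sets.} A path is spoiled if an internal vertex lies in $A$ or a color lies in $B$.
\begin{itemize}
\item[(a)] A path with a prescribed internal vertex $a$ at a prescribed position is counted by choosing the other $h-2$ internal vertices, giving $O(n^{h-2})$ paths.
\item[(b)] A path with a prescribed color $\beta$ at a prescribed edge position is counted by fixing all internal vertices but one endpoint of that edge; properness then determines the remaining vertex. This again gives $O(n^{h-2})$ paths.
\end{itemize}
Summing over positions and over $|A|+|B|\le rn$, at most $C_L r n^{h-1}$ paths are spoiled. Choosing $r_1\le\zeta/(2C_L)$ leaves a family $\mathcal F'_{x,y}$ of at least $(\zeta/2)n^{h-1}$ paths whose internal vertices avoid $A$ and whose colors avoid $B$. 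Only $x,y$ themselves may lie in $A$; the reserves constrain only internal vertices.

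\emph{Concentration via Janson.} Each path $P\in\mathcal F'_{x,y}$ survives exactly when its $h-1$ internal vertices lie in $W$ and its $h$ colors lie in $R$. These are $2h-1$ distinct independent Bernoulli trials, since $P$ is simple and rainbow. Hence $P$ survives with probability $\sigma^{h-1}\tau^h$, and the expected number of survivors is
\[
\mu\ge(\zeta/2)\sigma^{L-1}\tau^{L}n^{h-1}.
\]
I would apply the lower-tail Janson inequality \eqref{eq:janson-standard} with $s=\mu/2$, and then set $\zeta_*$ to be half the constant in the bound for $\mu$.

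The main obstacle is bounding the dependency sum $\Delta$, that is, the number of pairs of paths sharing an internal vertex or a color, by $O(n^{2h-3})$. A pair sharing an internal vertex costs a factor $n^{-1}$: there are $O(n^{h-1})\cdot O(n^{h-2})$ such pairs. For a shared color, fix the first path ($O(n^{h-1})$ choices) and then choose the second path with one of its at most $h$ colors prescribed at one of $h$ positions. By the count in (b) this gives $O(n^{h-2})$ choices, so again $O(n^{2h-3})$ pairs overall. Since $h\ge2$, $\mu+\Delta=O(n^{2h-3})$ and $\mu^2=\Theta(n^{2h-2})$. Janson therefore gives failure probability $\exp(-\Omega(n))$ for each pair $x,y$. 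The edge case $h=2$ is fine because $\Delta=O(n)$ there.

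\emph{Union bounds.} For the single-graph statement, a union bound over $O(n^2)$ pairs $(x,y)$ gives overall failure probability $\exp(-\Omega(n))$.

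For the simultaneous version, restricting $W_t,R_t$ to $H_j$ is again independent sampling, on $V(H_j)\setminus A_t$ and on $\col(H_j)\setminus B_t$. The sets $A_t\cap V(H_j)$ and $B_t\cap\col(H_j)$ still satisfy \eqref{eq:forbidden-load}. Colors of $\col(H_j)$ may also occur in other $H_{j'}$, but the paths lie inside $H_j$, so only their own colors matter. Hence the single-graph argument applies verbatim to each triple $(t,j,\{x,y\})$. There are $O(n)\cdot k\cdot O(n^2)$ such triples, each failing with probability $\exp(-\Omega(n))$, so the total failure probability is still $\exp(-\Omega(n))$.
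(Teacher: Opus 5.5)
Your proposal is correct and follows essentially the same route as the paper. You delete from the family of Lemma~\ref{lem:robust-connections} the $O(n^{h-2})$ paths per forbidden vertex or color, apply the lower-tail Janson inequality with dependency sum $O(n^{2h-3})$ and $\zeta_*=\zeta\sigma^{L-1}\tau^L/4$, and then take union bounds over endpoint pairs, indices, and graphs. The only cosmetic difference is in counting paths through a prescribed color: you fix all internal vertices but one endpoint of the relevant edge and use properness uniformly, whereas the paper treats end edges separately from internal edges and uses $|E_\gamma|\le n/2$ for the latter. Both give $O(n^{h-2})$.
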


\begin{proof}
Fix two distinct vertices $x,y\in V(H)$. By
Lemma~\ref{lem:robust-connections}, there exist an integer $h$ with
$2\leq h\leq L$ and a family $\mathcal P$ of at least
$\zeta n^{h-1}$ rainbow $x$--$y$ paths of length $h$.

A fixed vertex is an internal vertex of at most
$O_L(n^{h-2})$ paths in $\mathcal P$: choose its position and the
other $h-2$ internal vertices. The same bound holds for a fixed color
$\gamma$. If $\gamma$ appears on the first or last edge, properness
gives at most one choice for that edge, after which there are
$O_L(n^{h-2})$ choices for the other internal vertices. If $\gamma$
appears at an internal edge position, its color class is a matching
and has at most $n/2$ edges. After choosing that edge, there are
$O_L(n^{h-3})$ choices for the other internal vertices.

Delete from $\mathcal P$ every path having an internal vertex in
$A\setminus\{x,y\}$ or a color in $B$. For some constant $C_L$, the
number of deleted paths is at most
$C_L(|A|+|B|)n^{h-2}\le C_Lrn^{h-1}$. Choose $r_1$ so that
$C_Lr_1\le\zeta/2$. The remaining family $\mathcal P'$ then satisfies
\begin{equation}
 |\mathcal P'|\ge \frac{\zeta}{2}n^{h-1}.
\end{equation}
The endpoints $x$ and $y$ need not lie in $W$, so they are not removed
when they belong to $A$.

For each $P\in\mathcal P'$, let $I_P$ indicate that every internal
vertex of $P$ lies in $W$ and every color of $P$ lies in $R$. Put
$X:=\sum_{P\in\mathcal P'}I_P$ and $\mu:=\E X$. Since $P$ is
simple and rainbow, $\E I_P=\sigma^{h-1}\tau^h$. Hence
\begin{equation}
 \mu=|\mathcal P'|\sigma^{h-1}\tau^h
 \ge \frac{\zeta}{2}\sigma^{L-1}\tau^L n^{h-1}.
\end{equation}
Set $\zeta_*:=\zeta\sigma^{L-1}\tau^L/4$. Then
$\zeta_*n^{h-1}\le\mu/2$.

Two indicators $I_P$ and $I_Q$ are independent unless $P$ and $Q$
share an internal vertex or a color. Sharing $x$ or $y$ does not
matter, since the endpoints are not reserve variables. Each indicator is
dependent on at most $O_L(n^{h-2})$ other indicators. Since
$|\mathcal P'|=O_L(n^{h-1})$, the dependency sum in
\eqref{eq:janson-dependency-sum} is $O_L(n^{2h-3})$. Therefore,
by \eqref{eq:janson-standard},
\begin{equation}
 \Pp\bigl(X<\zeta_*n^{h-1}\bigr)
 \le
 \Pp\bigl(X\le\mu/2\bigr)
 \le
 \exp\left\{-\Omega\left(
 \frac{\mu^2}{\mu+n^{2h-3}}\right)\right\}
 =\exp(-\Omega(n)).
\end{equation}

All constants and failure estimates are uniform over the admissible
choices of $H,A,B,x,y$. There are at most $n^2$ endpoint pairs, $O(n)$
prescribed pairs $(A,B)$, and a bounded number of graphs. The restrictions
of a global reserve pair have exactly the distributions used above, even
when the same color appears in more than one graph. A union bound proves
the simultaneous assertion.
\end{proof}

The following proposition is the main technical result of this
subsection. The graphs $H_j$ are the dense pieces in which the packing
is performed. Each element of $\mathcal T_j$ is one path slot assigned
to $H_j$, and will receive one forest $F_t$. For each
$t\in\mathcal T_j$, the sets $A_t$ and $B_t$ are prescribed forbidden
sets of vertices and colors, while $W_t$ and $R_t$ are private reserves
used later to join the components of $F_t$. The
proposition constructs pairwise edge-disjoint globally rainbow linear
forests that avoid the relevant forbidden and reserve sets, leaving in
each $H_j$ a residual graph whose maximum degree and maximum color
multiplicity are both $O_\ell(rn)+o(n)$.

\begin{proposition}[Rainbow packing with forbidden sets]\label{lem:forbidden-packing}
Fix $d>a>0$, $b,\eta>0$, and choose fixed reserve probabilities
$0<\sigma,\tau<1$ and a fixed integer $\ell\geq3$ so that
$\sigma,\tau,1/\ell$ are sufficiently small in terms of $\eta$.
Let $L=L(d,a,b)$ and $\zeta_*=
\zeta_*(d,a,b,\sigma,\tau)$ be as in
Lemma~\ref{lem:reserved-robust-connections}.
There exist constants $r_0>0$ and $n_2$ such that, for every
$n\ge n_2$ and every $0\le r\le r_0$, the following holds. Let
$H_1,\ldots,H_k$ be vertex-disjoint properly edge-colored graphs, where
\begin{equation}
m_j:=|V(H_j)|,\qquad
\sum_{j=1}^k m_j\leq n,
\qquad m_j\geq dn,\qquad \delta(H_j)\geq dn,
\label{eq:forbidden-host}
\end{equation}
and every cut of $H_j$ with both sides of size at least $an$ has at
least $bn^2$ edges. Let $\mathcal T_1,\ldots,\mathcal T_k$ be pairwise
disjoint index sets, with $\mathcal T_j$ associated with $H_j$, where
\begin{equation}
(1+\eta)\frac{m_j}{2}
 \le |\mathcal T_j|
 \le (1+2\eta)\frac{m_j}{2}.
\label{eq:forbidden-index-count}
\end{equation}
For every $j\in[k]$ and $t\in\mathcal T_j$, prescribe sets $A_t$ of
vertices and $B_t$ of colors such that
\begin{equation}
|A_t|+|B_t|\le rn.
\label{eq:forbidden-sets}
\end{equation}
Assume also that every vertex of $H_j$ belongs to at most $rn$ of the
sets $A_t$ with $t\in\mathcal T_j$, and every color
appearing in $H_j$ belongs to at most $rn$ of the sets $B_t$ with
$t\in\mathcal T_j$.

Then, for every $t\in\bigcup_{j=1}^k\mathcal T_j$, one can choose a
global set $W_t$ of reserved vertices and a global set $R_t$ of reserved
colors, and, for every $j\in[k]$ and $t\in\mathcal T_j$, a globally
rainbow linear forest $F_t$ in $H_j$, such that:

\begin{itemize}
\item every component of $F_t$ is an $\ell$-edge path, and the forests
      $F_t$ are pairwise edge-disjoint;
\item $W_t\cap A_t=\varnothing$, $R_t\cap B_t=\varnothing$, and $F_t$
      avoids $A_t\cup W_t$ and all colors in $B_t\cup R_t$;
\item for every $t\in\bigcup_{j'=1}^k\mathcal T_{j'}$, every $j\in[k]$,
      and every pair of distinct vertices $x,y\in V(H_j)$, there is an integer
      $h$ with $2\leq h\leq L$ such that $H_j$ contains at least
      $\zeta_*n^{h-1}$ rainbow $x$--$y$ paths of length $h$ whose
      internal vertices lie in $W_t$ and whose colors lie in $R_t$;
\item if
      $J_j:=H_j-\bigcup_{t\in\mathcal T_j}E(F_t)$, then
\begin{equation}
\Delta(J_j),\mu_c(J_j)\le Crn+o(n),
\label{eq:forbidden-residual}
\end{equation}
      where $C$ depends only on the fixed constants. The error
      term is uniform for $0\le r\le r_0$.
\end{itemize}
\end{proposition}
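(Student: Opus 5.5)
We mirror the proof of Lemmas~\ref{lem:regularisation} and~\ref{lem:forest-packing}, run separately inside each $H_j$, with the random reserves replaced by reserves sampled outside the prescribed forbidden sets. For every $t\in\bigcup_j\mathcal T_j$ we sample $W_t\subseteq V\setminus A_t$ with probability $\sigma$ per vertex and $R_t\subseteq\col\setminus B_t$ with probability $\tau$ per color, independently over $t$. Here $V=\bigcup_jV(H_j)$ and $\col=\bigcup_j\col(H_j)$. By the simultaneous version of Lemma~\ref{lem:reserved-robust-connections}, applied with the $O(n)$ pairs $(A_t,B_t)$ and $r\le r_0\le r_1$, the third bullet holds with probability $1-\exp(-\Omega(n))$. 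It remains to show that, with probability $1-o(1)$ for the same reserves, the packing exists. The disjointness $W_t\cap A_t=\varnothing$ and $R_t\cap B_t=\varnothing$ holds by construction.

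For fixed $j$, consider the stationary non-backtracking walk in $H_j$. Lemma~\ref{lem:stationary} applies with $\alpha$ replaced by $d$, since $\delta(H_j)\ge dn\ge d m_j$. Call $(t,P)$ with $t\in\mathcal T_j$ and $P$ a rainbow $\ell$-edge path of $H_j$ \emph{admissible} if $P$ avoids $A_t\cup W_t$ and $B_t\cup R_t$. Form the $(3\ell+1)$-uniform auxiliary hypergraph with vertices $E_e$ for $e\in E(H_j)$, $V_{t,v}$, and $C_{t,\gamma}$. Include each admissible $(t,P)$ with probability proportional to $n^{\ell+1}w(P)$, exactly as in Lemma~\ref{lem:regularisation}. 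The new feature is that admissibility is no longer a product event with the same probability $\kappa$ for every $(t,P)$. A path meeting $A_t\cup B_t$ is inadmissible deterministically.

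We handle this by writing the edge degree as
\begin{equation*}
 d(E_e)\approx \sum_t\kappa\,(\text{weight of paths through }e\text{ avoiding }A_t,B_t).
\end{equation*}
A vertex of $A_t$ or a color of $B_t$ removes path weight $O(n^{-3})$ or less from the paths through $e$, except when it is incident with $e$ or equals $c(e)$. By hypothesis each vertex or color lies in at most $rn$ of the sets $A_t$ or $B_t$. So the number of indices $t$ for which $e$ itself is killed is at most $3rn$. Hence
\begin{equation*}
 d(E_e)\ge(1-O(r)-o(1))D_0,\qquad d(E_e)\le(1+o(1))D_0.
\end{equation*}
Thus the edge degrees are only $(1\pm O(r))$-regular, not asymptotically equal. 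The vertex and color degrees $d(V_{t,v})$ and $d(C_{t,\gamma})$ are bounded above exactly as before. The slack from $|\mathcal T_j|\ge(1+\eta)m_j/2$ together with $d_{H_j}(v)\le m_j$ and $|E_\gamma|\le m_j/2$ keeps them below $(1-\xi)D_0$. Concentration over the reserve choice uses the same bounded-differences and Hoeffding computations as before, now conditioning on the fixed $A_t,B_t$.

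The main obstacle is the unequal edge degrees, since Theorem~\ref{thm:egj} gives coverage proportional to $d(E_e)/\Delta$. I would try first to \emph{regularize} by adding dummy hyperedges. For each $e$ with deficient degree, add $D_0-d(E_e)$ fresh hyperedges, each containing $E_e$ and new private dummy vertices of degree one. This keeps codegrees $O(n^{\ell-1})$ and makes every $E_e$ have degree $(1+o(1))D_0$. Apply Theorem~\ref{thm:egj} with the weights $\omega_v$, $\omega_\gamma$, and discard the selected dummy edges. The weight $\omega_v$ at $v$ is then covered up to a loss equal to its dummy weight plus $o(n)$. The dummy weight is $O(rn)\cdot d_{H_j}(v)\cdot D_0/D_0$-proportionally, giving residual degree at most $Crn+o(n)$. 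A second weight family counting dummies incident with $v$ or of color $\gamma$ controls this loss. The same holds for $\mu_c$, giving \eqref{eq:forbidden-residual}.

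The forests $F_t$ are read off from the matching. They are globally rainbow linear forests with $\ell$-edge components by the matching property on $V_{t,\cdot}$ and $C_{t,\cdot}$. They are edge-disjoint across all $t$, including different $j$, because the $H_j$ are vertex-disjoint. They avoid $A_t\cup W_t$ and $B_t\cup R_t$ by admissibility. Taking a union bound over the boundedly many $j$ completes the plan.
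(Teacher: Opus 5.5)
Your proposal is correct and matches the paper's proof in all essentials: reserves are sampled off $A_t,B_t$ and the simultaneous form of Lemma~\ref{lem:reserved-robust-connections} gives the third bullet; each $H_j$ gets an auxiliary hypergraph with stationary weights; edge degrees are $(1\pm O(r)\pm o(1))D_{0,j}$; the slack in $|\mathcal T_j|$ keeps indexed degrees below $(1-\xi)D_{0,j}$ (the paper gets these bounds, and the codegree bound, by coupling with the unrestricted construction rather than re-running the concentration); and Theorem~\ref{thm:egj} is applied with the weights $\omega_v,\omega_\gamma$. The one divergence, the dummy-hyperedge regularization, is unnecessary: Theorem~\ref{thm:egj} only needs $\Delta(\mathcal H)\le D$, so the paper takes $D_j=(1+C_2r+\rho_n)D_{0,j}$ and reads the $O(rn)+o(n)$ loss directly off $\omega_v(E(\mathcal H_j))=\sum_{e\ni v}d_{\mathcal H_j}(E_e)$, which is exactly your computation once the dummies are discarded. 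You should also drop the extra dummy-weight family, since its total mass can be too small to satisfy \eqref{eq:egj-weight-condition}.
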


\begin{proof}
Choose $r_0\leq r_1(d,a,b,\sigma,\tau)$, decreasing it later if
necessary.
We repeat the construction from Lemmas~\ref{lem:regularisation}
and~\ref{lem:forest-packing}, requiring the paths assigned to $t$ to
avoid $A_t$ and $B_t$. The only new point is to show that these
additional restrictions change each host-edge degree by a relative
$O_\ell(r)$ amount.
After the sets $A_t,B_t$ have been fixed, independently for each $t$,
include every vertex in $\bigcup_jV(H_j)\setminus A_t$ in $W_t$ with
probability $\sigma$, and include every color in
$\bigcup_j\col(H_j)\setminus B_t$ in $R_t$ with probability $\tau$.
All these choices are independent across indices, vertices, and colors.
In particular, $W_t\cap A_t=R_t\cap B_t=\varnothing$.
The simultaneous form of
Lemma~\ref{lem:reserved-robust-connections} gives the third bullet for
all indices, graphs, and endpoint pairs.

Fix $j$ and let $\mathcal P_j$ be the family of unoriented simple rainbow
$\ell$-edge paths in $H_j$. Give these paths the stationary
non-backtracking-walk weights $w(P)$ from Lemma~\ref{lem:stationary}.
Since $m_j\leq n$ and $\delta(H_j)\geq dn$, we have
$\delta(H_j)\geq dm_j$. Applying \eqref{eq:path-weight-bound} to
$H_j$, and then using $m_j\geq dn$, gives a constant
$C_0=C_0(d,\ell)$ such that $w(P)\leq C_0n^{-\ell-1}$ uniformly in
$j$. Fix $0<a_0\leq C_0^{-1}$. Then
$a_0n^{\ell+1}w(P)\leq1$ for every $P\in\mathcal P_j$. For every admissible
pair $(t,P)$, where $t\in\mathcal T_j$ and $P$ avoids
\begin{equation}
A_t\cup W_t\quad\text{and}\quad B_t\cup R_t,
\end{equation}
form the same $(3\ell+1)$-uniform auxiliary hyperedge as in
equation~\eqref{eq:auxiliary-edge}, using the vertices $E_e$,
$V_{t,v}$, and $C_{t,\gamma}$. Sample it independently with probability
$a_0n^{\ell+1}w(P)$, and let $\mathcal H_j$ be the resulting auxiliary
hypergraph. Put
\begin{equation}
\kappa:=(1-\sigma)^{\ell+1}(1-\tau)^\ell,
\qquad
D_{0,j}:=|\mathcal T_j|\kappa a_0n^{\ell+1}
              \frac{\ell}{e(H_j)}=\Theta(n^\ell).
\label{eq:forbidden-D0}
\end{equation}

There is a constant $C_1$ depending only on the fixed parameters such
that the stationary-walk estimates imply, uniformly in $e$,
\begin{equation}
d_{\mathcal H_j}(E_e)=\bigl(1\pm C_1r\pm o(1)\bigr)D_{0,j}.
\label{eq:forbidden-edge-degree}
\end{equation}
We give the details. For $e\in E(H_j)$ and $t\in\mathcal T_j$, let
$Y_{t,e}$ be the $w$-mass of the paths containing $e$ that avoid
$A_t\cup W_t$ and all colors in $B_t\cup R_t$. The endpoints and the
color of $e$ exclude at most $3rn$ indices, by the incidence assumptions
on $A_t$ and $B_t$. Put
\begin{equation}
 M_e:=\sum_{\substack{P\in\mathcal P_j\\e\in E(P)}}w(P)
 =(1+o(1))\frac{\ell}{e(H_j)}=\Theta(n^{-2}).
\end{equation}
For every other index, expose the walk in both directions from the
occurrence of $e$. At each remaining position, the conditional
probability of a fixed vertex outside $V(e)$ or a fixed color different
from $c(e)$ is $O_\ell(n^{-1})$.
Thus the $w$-mass of the paths containing $e$ and meeting any fixed
additional vertex or color is at most $C_\ell M_e/n$. A union bound
over $A_t$ and $B_t$ shows that the mass deleted by the prescribed
forbidden sets is at most $C_\ell rM_e$. Every remaining path survives
the random reserves with probability
$\kappa=(1-\sigma)^{\ell+1}(1-\tau)^\ell$. Consequently,
\eqref{eq:forbidden-sets} and the bound on the excluded indices give
\begin{equation}
 \E\sum_{t\in\mathcal T_j}Y_{t,e}
 =\bigl(1\pm O_\ell(r)\pm o(1)\bigr)
   |\mathcal T_j|\kappa\frac{\ell}{e(H_j)}
 =\Theta(n^{-1}).
\end{equation}
The variables $Y_{t,e}$ are independent over $t$, and
$0\leq Y_{t,e}\leq M_e=O(n^{-2})$. Since
$|\mathcal T_j|=O(n)$, the sum of the squared lengths of their ranges
is $O(n^{-3})$. Taking $\rho=n^{-1/10}$ and applying
\eqref{eq:hoeffding-standard} with
$s=\rho\E\sum_tY_{t,e}=\Theta(\rho n^{-1})$ gives
\begin{equation}
 \Pp\left(\left|\sum_{t\in\mathcal T_j}Y_{t,e}
 -\E\sum_{t\in\mathcal T_j}Y_{t,e}\right|>
 \rho\E\sum_{t\in\mathcal T_j}Y_{t,e}\right)
 \leq \exp(-\Omega(n^{4/5})).
\end{equation}
A union bound shows that this reserve estimate holds simultaneously for
every host edge in every $H_j$, since $k\leq1/d$ and there are
$O(n^2)$ host edges. Conditional on reserve sets satisfying it,
$d_{\mathcal H_j}(E_e)$ is a sum of independent Bernoulli variables
with mean $a_0n^{\ell+1}\sum_tY_{t,e}=\Theta(n^\ell)$.
Equation~\eqref{eq:chernoff-standard}, again with relative error $\rho$,
gives failure probability $\exp(-\Omega(n^{\ell-1/5}))$. A union bound
over the $O(n^2)$ host edges proves
\eqref{eq:forbidden-edge-degree} uniformly.

If $v\in A_t\cup W_t$ or $\gamma\in B_t\cup R_t$, the corresponding
indexed degree is zero. To bound all other indexed degrees, as required in
\eqref{eq:forbidden-indexed-degrees},
couple this construction with the corresponding construction in which
$A_t$ and $B_t$ are absent. Give the vertices in $A_t$ and the colors
in $B_t$ independent auxiliary reserve indicators,
with probabilities $\sigma$ and $\tau$, respectively, and use the same
indicators for all other vertices and colors and the same auxiliary-edge
sampling variables in both constructions. Every path admissible in the
present construction avoids $A_t$ and $B_t$, so it is also admissible in the
comparison construction. Hence, under this coupling, the forbidden
sets can only reduce the degrees of the vertices $V_{t,v}$ and
$C_{t,\gamma}$.
Repeat the proof of Lemma~\ref{lem:regularisation} for the comparison
construction, using ambient scale $n$, unreserved probabilities
$1-\sigma$ and $1-\tau$, and index set $\mathcal T_j$. Its estimates
hold simultaneously with probability $1-o(1)$. The bounds
\eqref{eq:degree-vertex}
and~\eqref{eq:degree-color},
together with \eqref{eq:forbidden-index-count}, give
\begin{equation}
\frac{d_{\mathcal H_j}(V_{t,v})}{D_{0,j}}
 \le \frac{1+1/\ell+o(1)}{(1+\eta)(1-\sigma)},
\qquad
\frac{d_{\mathcal H_j}(C_{t,\gamma})}{D_{0,j}}
 \le \frac{1+o(1)}{(1+\eta)(1-\tau)}.
\label{eq:forbidden-indexed-degrees}
\end{equation}
The second estimate is needed only when the color class has size at
least $n^{1/2}$; every smaller color class has indexed degree
$o(D_{0,j})$ by the corresponding part of
Lemma~\ref{lem:regularisation}.
By the choice of $\sigma,\tau$ and $\ell$ above, both quantities in
\eqref{eq:forbidden-indexed-degrees} are at most $1-\xi$ for some fixed
$\xi>0$. Under the same coupling, the present auxiliary hypergraph is
a subhypergraph of the comparison hypergraph. Hence
\eqref{eq:auxiliary-codegree} gives
\begin{equation}
\Delta_2(\mathcal H_j)=O(D_{0,j}/n)=O(n^{\ell-1}).
\label{eq:forbidden-codegree}
\end{equation}
The reserve-connection event and all the degree, codegree, and comparison
events above hold simultaneously with probability $1-o(1)$. Fix a
realization for which they all hold.

Choose a positive sequence $\rho_n\to0$ which uniformly dominates all
the $o(1)$ terms above, and choose a fixed constant
$C_2>C_1+1$. Put
\begin{equation}
D_j=(1+C_2r+\rho_n)D_{0,j}.
\end{equation}
After reducing the fixed admissible upper bound $r_0$, if necessary,
we have $\Delta(\mathcal H_j)\le D_j$. With
$\beta=1/(4\ell)$, \eqref{eq:forbidden-codegree} also gives
$\Delta_2(\mathcal H_j)\le D_j^{1-\beta}$ for all sufficiently large
$n$. Thus the degree and codegree hypotheses of the
Ehard--Glock--Joos theorem hold with parameter $D_j$.

Use the vertex weights
\begin{equation}
\omega_v(t,P):=d_P(v)
\end{equation}
and, for every color satisfying
$|E_\gamma\cap E(H_j)|\ge n^{1/2}$, the color
weights
\begin{equation}
\omega_\gamma(t,P):=\mathbf1_{\{\gamma\in\col(P)\}}.
\end{equation}
Use $\beta=1/(4\ell)$ in Theorem~\ref{thm:egj}. Since
$D_j=\Theta(n^\ell)$, every vertex weight has total mass
$\Omega(nD_{0,j})$, while every color weight under consideration has
total mass $\Omega(n^{1/2}D_{0,j})$; the maximum weight of one auxiliary
edge is at most two. Hence all these weights satisfy
condition~\eqref{eq:egj-weight-condition}.
The auxiliary edge set and the weight family have polynomial size, so
the remaining size hypotheses of Theorem~\ref{thm:egj} also hold.
Theorem~\ref{thm:egj}
therefore applies with parameter $D_j$ and this weight family.
Every sampled path contributes once to $\omega_v$ for each of its edges
incident with $v$. Since every sampled path is rainbow, it contributes
once to $\omega_\gamma$ precisely when it contains an edge of color
$\gamma$. Therefore
\begin{equation}
 \omega_v(E(\mathcal H_j))
  =\sum_{e\in E_{H_j}(v)}d_{\mathcal H_j}(E_e),
 \qquad
 \omega_\gamma(E(\mathcal H_j))
  =\sum_{e\in E_\gamma\cap E(H_j)}d_{\mathcal H_j}(E_e).
\end{equation}
Equation~\eqref{eq:forbidden-edge-degree} now gives
\begin{equation}
\omega_v(E(\mathcal H_j))
 =\bigl(1\pm O_\ell(r)\pm o(1)\bigr)d_{H_j}(v)D_{0,j},
\end{equation}
and
\begin{equation}
\omega_\gamma(E(\mathcal H_j))
 =\bigl(1\pm O_\ell(r)\pm o(1)\bigr)
     |E_\gamma\cap E(H_j)|D_{0,j}.
\end{equation}
Let $\mathcal M_j$ be the matching supplied by
Theorem~\ref{thm:egj}. Since
$D_j=(1+O_\ell(r)+o(1))D_{0,j}$, its weighted conclusion gives
\begin{equation}
 \omega_v(\mathcal M_j)
  \geq d_{H_j}(v)-O_\ell(rn)-o(n),
 \qquad
 \omega_\gamma(\mathcal M_j)
  \geq |E_\gamma\cap E(H_j)|-O_\ell(rn)-o(n).
\end{equation}
The second estimate is needed only for color classes of size at least
$n^{1/2}$; every smaller class already has size $o(n)$. Thus at most
$O_\ell(rn)+o(n)$ edges remain at every vertex and in every color
class. Since the chosen
hyperedges form a matching, no two contain the same $E_e$; hence all
selected paths are edge-disjoint. For each $t$, no two contain
the same $V_{t,v}$ or $C_{t,\gamma}$; hence their union is a globally
rainbow linear forest. This proves \eqref{eq:forbidden-residual} and
completes the proof of the proposition.
\end{proof}

\subsection{Arbitrary dense graphs}\label{subsec:arbitrary-dense}

We now prove part~(i). The first lemma extends a globally rainbow linear
forest in a graph of linear minimum degree. The next two lemmas prepare the
application of Proposition~\ref{lem:forbidden-packing} to an arbitrary
dense graph.

\begin{lemma}[Linear-forest extension]\label{lem:forest-extension}
Fix $\alpha>0$ and an integer $\ell\geq1$. Let $(G,c)$ be a properly
edge-colored graph on $n$ vertices with minimum degree
$\delta\geq\alpha n$. If $F$ is a globally rainbow linear forest whose
components are $\ell$-edge paths, then $E(F)$ can be covered by at most
\begin{equation}
 (12+\frac{12}{\ell}+o(1))
 \frac{n\,e(F)}{\delta^2}
 +(2+o(1))\frac n\delta
\end{equation}
rainbow paths in $G$. The error is uniform over $F$.
\end{lemma}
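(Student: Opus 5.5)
The plan is to chain the components of $F$ greedily into rainbow paths, with short connectors passing through common neighbourhoods. The two terms in the bound will come from two different reasons for closing a path: exhaustion of vertex and color resources, which gives the first term, and the absence of a short connector, which gives the second.

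\textbf{Path-building procedure.} Let $\mathcal K$ be the set of components of $F$, so $|\mathcal K|=e(F)/\ell$. We build rainbow paths $P_1,P_2,\ldots$ one at a time. For the current path $P$, let $x$ be its terminal vertex, let $U=V(P)$, and let $\mathcal C=\col(P)$. A remaining component $K\in\mathcal K$ is \emph{free} if $V(K)\cap U=\varnothing$ and $\col(K)\cap\mathcal C=\varnothing$. Because $F$ is a linear forest and globally rainbow, each vertex of $U$ meets at most one component and each color of $\mathcal C$ lies in at most one component. Hence at most $|U|+|\mathcal C|$ remaining components are not free. We extend $P$ by a connector $xzy$ or $xuvy$ followed by a free component $K$ entered at its endpoint $y$. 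The connector must have new internal vertices and new colors, and its colors must differ from those of $K$. As in the proof of Lemma~\ref{lem:matching-extension}, properness means each forbidden vertex or color excludes at most one candidate at each fixed endpoint, so the loss is $O(|U|+|\mathcal C|)$ candidates per step.

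\textbf{Counting connectors.} Each step adds $\ell+1$ or $\ell+2$ vertices and $\ell+2$ or $\ell+3$ colors, that is, about $(\ell+O(1))/\ell$ resources per edge of $F$ absorbed. The central counting claim concerns an endpoint $x$ and the set $Y$ of endpoints of free components. If $|Y|\geq\lambda n$, then the number of walks $x\,z\,y$ or $x\,u\,v\,y$ with $y\in Y$ is at least of order $\delta^2|Y|/n$. For length-$3$ walks, this follows by Cauchy--Schwarz on $e(N(x),N(Y))$, using $e(F)$ and minimum degree $\delta$. A connector therefore survives as long as $|U|+|\mathcal C|$ is smaller than a fixed fraction of $\delta^2/n$. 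This suggests closing $P$ once it has absorbed about $\delta^2/\bigl(12(1+1/\ell)n\bigr)$ edges of $F$. The number of paths closed for this reason is then at most $(12+12/\ell+o(1))n\,e(F)/\delta^2$. The constant $12$ should come out of the same bookkeeping, $|U|+2|\mathcal C|$ per step, as in Lemma~\ref{lem:matching-extension}.

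\textbf{Stuck paths.} A path may also be closed before exhausting its resources because no connector exists: every free endpoint is at distance greater than $3$ from $x$, or the walks counted above are too few. To charge these closures, note that the closed neighbourhoods $N[x]$ of the terminal vertices $x$ of stuck paths are far from all remaining components. I would aim to show that such $x$ can be chosen with pairwise disjoint neighbourhoods $N(x)$, each of size at least $\delta$. This requires a careful choice of the starting vertex of each new path: start each new path inside the region that still contains free components, so that consecutive stuck terminals lie in different regions. Disjointness then bounds the number of stuck closures by $n/\delta$. Adding a similar $n/\delta$ for the final partially filled path in each region gives the $(2+o(1))n/\delta$ term.

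\textbf{Main obstacle.} The hardest step is this dichotomy between exhaustion and stuckness. A general dense graph need not have common neighbours, so the walk-counting lower bound has to be made robust against the $O(|U|+|\mathcal C|)$ deletions while still producing a clean $\delta^2/n$ rate. I would first try the three-edge connectors $xuvy$ with $u\in N(x)$ and $v\in N(y)$, bounding $e(N(x),N(Y))$ from below via the minimum degree. I would fall back on two-edge connectors only when $|N(x)\cap N(y)|$ is linear.
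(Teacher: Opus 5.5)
There is a genuine gap, and it sits exactly where you place the main obstacle.

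\textbf{Why the proposed mechanism fails.} Your central counting claim is false for dense graphs in general. Let $G$ be the disjoint union of two cliques of order $n/2$, so $\delta=n/2-1$. Put $x$ in one clique and the free endpoints $Y$ in the other. Then $|Y|$ is linear, but there is no $x$--$Y$ walk at all, and $e(N(x),N(Y))=0$. Even where walks exist, the candidate-loss bookkeeping of Lemma~\ref{lem:matching-extension} relies on the complete multipartite structure. It does not transfer to walk counts, where one forbidden middle vertex of a three-edge connector can destroy many walks at once. The constant $12$ is asserted rather than derived.

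The charging of stuck closures is only an aim, and in the form stated it fails. The full neighborhoods $N(x)$ of stuck terminals need not be disjoint. A common neighbor $u$ of two stuck terminals gives no contradiction if $u$, or the color of $x_iu$, was already used by the earlier path. The ``regions'' are never defined. Finally, with your notion of freeness, a component can stop being free for the current path because a connector passed through one of its vertices or used one of its colors. That blocks any exchange argument.

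\textbf{The missing idea: batching.} This produces both terms from a single argument, with no exhaustion/stuck dichotomy and no walk counting.
\begin{enumerate}
\item Split the $s=e(F)/\ell$ components into groups of at most $L=\lfloor\delta/(6(\ell+1))\rfloor$ and treat each group separately.
\item Use only connectors of length one or two. The internal vertex must avoid the current path and \emph{all} unprocessed components of the group. The connector colors must avoid \emph{all} colors of the group and of earlier connectors.
\item Then every unprocessed component stays usable, and a path is closed only when no such extension exists.
\item A group of $t$ components forbids at most $(\ell+2)t-2$ vertices besides the exit $x_i$ and at most $(\ell+2)t-2$ colors. By properness, each forbidden color excludes at most one neighbor of $x_i$. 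So the set $U_i$ of new neighbors $u$ of $x_i$ with $c(x_iu)$ new has size at least $\delta-2(\ell+2)t+4\geq\delta/2$.
\item These sets are pairwise disjoint. Suppose $u\in U_i\cap U_j$ with $i<j$, and let $Q$ be the last component on $P_j$. Then $x_iux_j$ followed by $Q$ reversed was a legal extension when $P_i$ was closed, because $Q$ was still unprocessed and the set of used colors only grows.
\end{enumerate}
Hence each group yields at most $2n/\delta$ paths. Multiplying by the $\lceil s/L\rceil\leq s/L+1$ groups gives $(12+12/\ell+o(1))ne(F)/\delta^2+(2+o(1))n/\delta$. The first term is thus (number of groups)$\times2n/\delta$, not a count of paths closed by resource exhaustion.
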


\begin{proof}
Let $s=e(F)/\ell$ be the number of components of $F$. The assertion is
immediate when $s=0$. Put
\begin{equation}
 L=\lfloor\frac{\delta}{6(\ell+1)}\rfloor,
 \qquad B=\delta-2(\ell+2)L+4,
\end{equation}
and partition the components of $F$ into
$r=\lceil s/L\rceil$ groups, each containing at most $L$ components.
We show that every group can be covered by at most $n/B$ rainbow paths.

Fix one group $\mathcal T$ of $t\leq L$ target paths, and let
$\mathcal C$ be the set of their $\ell t$ colors. During the
construction, let $\mathcal R$ be the set of unprocessed target paths
and let $\mathcal K$ be the connector colors already used for this
group; initially, $\mathcal R=\mathcal T$ and $\mathcal K=\varnothing$.
Choose one target path, orient it arbitrarily, call it $P$, and remove
it from $\mathcal R$. If the current exit of $P$ is $x$, extend it
whenever one of the following operations is possible.

\begin{enumerate}
\item Choose $Q\in\mathcal R$, orient it from $y$ to $z$, and append
the edge $xy$ followed by $Q$, provided that $xy\in E(G)$ and
$c(xy)\notin\mathcal C\cup\mathcal K$.
\item Choose $Q\in\mathcal R$, orient it from $y$ to $z$, and choose
$u\notin V(P)\cup V(\mathcal R)$ such that $xu,uy\in E(G)$ and both
connector colors lie outside $\mathcal C\cup\mathcal K$. Append
$xuy$ followed by $Q$.
\end{enumerate}

In either operation, remove $Q$ from $\mathcal R$ and add the connector
colors to $\mathcal K$. The target paths are vertex-disjoint and
globally rainbow. The connector vertices are new, the connector
colors are new, and in the second operation the two connector colors
are distinct by properness at $u$. Thus the path remains simple and
rainbow.

When neither operation is possible, close the current path, call it
$P_i$, and denote its exit by $x_i$. Let $\mathcal R_i$ and
$\mathcal K_i$ be the sets present at that time, and define
\begin{equation}
 U_i=\{u\notin V(P_i)\cup V(\mathcal R_i):
 x_iu\in E(G),\
 c(x_iu)\notin\mathcal C\cup\mathcal K_i\}.
\end{equation}
If $\mathcal R_i$ is non-empty, choose and orient a path
$Q\in\mathcal R_i$, set $P=Q$ and
$\mathcal R=\mathcal R_i\setminus\{Q\}$, and repeat. Continue until
every target path in $\mathcal T$ has been processed.

At most $2(t-1)$ connector colors are used throughout this construction, so
$|\mathcal C\cup\mathcal K_i|\leq(\ell+2)t-2$. The set
$V(P_i)\cup V(\mathcal R_i)$ contains at most the
$(\ell+1)t$ target vertices and $t-1$ internal connector vertices.
After omitting $x_i$, it therefore excludes at most
$(\ell+2)t-2$ neighbors of $x_i$. Properness allows each forbidden
color to exclude at most one further neighbor. Hence
\begin{equation}\label{eq:available-set}
 |U_i|\geq\delta-2(\ell+2)t+4\geq B\geq\frac{\delta}{2}.
\end{equation}

The sets $U_i$ are pairwise disjoint. Suppose that $i<j$ and
$u\in U_i\cap U_j$. Let $Q$ be the last target path traversed by
$P_j$, and let $x_j$ be the exit of $P_j$. When $P_i$ was closed,
$Q$ still belonged to $\mathcal R_i$, so $u\notin V(Q)$. Moreover,
$\mathcal K_i\subseteq\mathcal K_j$. The edges $x_iu$ and $ux_j$
therefore have colors outside
$\mathcal C\cup\mathcal K_i$, and their colors are distinct because
they meet at $u$. Appending $x_iux_j$ and then traversing $Q$ in
reverse would have been a legal extension of the second type. This is
a contradiction.

If $\mathcal T$ produces $p$ paths, \eqref{eq:available-set} and the
disjointness of the sets $U_i$ give $pB\leq n$. Consequently, all
$r$ groups are covered by at most
\begin{equation}
 \frac nB(\frac sL+1)
 \leq
 (12+\frac{12}{\ell}+o(1))
 \frac{n\,e(F)}{\delta^2}
 +(2+o(1))\frac n\delta
\end{equation}
paths, because $B\geq\delta/2$ and
$L=(1+o(1))\delta/(6(\ell+1))$. This proves the lemma.
\end{proof}

\begin{lemma}[Nested cut decomposition]\label{lem:nested-robust-decomposition}
Fix $0<a<1$ and then an integer $K>1/a$. Choose $\theta_K>0$. For
$i=K-1,\ldots,0$, choose $\theta_i>0$ sufficiently small in terms of
$\theta_{i+1}$. Thus
\begin{equation}
 0<\theta_0\ll\theta_1\ll\cdots\ll\theta_K.
\end{equation}
Every graph $G$ on $n$ vertices admits an index $i<K$ and a partition
$\mathcal P$ into at most $K$ parts, each of size at least $an$. If $C$
is the graph formed by the edges joining distinct parts, then
\begin{equation}\label{eq:few-cross-edges}
 e(C)<K\theta_i n^2.
\end{equation}
Moreover, for every $U\in\mathcal P$ and every partition
$U=A\mathbin{\dot\cup}B$ with $|A|,|B|\ge an$,
\begin{equation}\label{eq:robust-parts}
 e_G(A,B)\ge\theta_{i+1}n^2.
\end{equation}
\end{lemma}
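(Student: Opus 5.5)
We build the partition by iterated splitting and track the scale through an index that increases by one at each split. Start with $\mathcal P_0=\{V(G)\}$ at stage $0$. At stage $i$ we hold a partition $\mathcal P_i$ into exactly $i+1$ parts, each of size at least $an$. We maintain the invariant
\begin{equation*}
 e(C_i)<i\,\theta_i n^2 ,
\end{equation*}
where $C_i$ denotes the set of cross edges of $\mathcal P_i$; it holds trivially for $i=0$.

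At stage $i$ we ask whether every part $U\in\mathcal P_i$ satisfies \eqref{eq:robust-parts} with $\theta_{i+1}$.
\begin{itemize}
\item If so, we stop and output $\mathcal P_i$ with index $i$. It has $i+1\le K$ parts, and $e(C_i)<i\theta_i n^2\le K\theta_i n^2$, which gives \eqref{eq:few-cross-edges}.
\item Otherwise some part $U$ has a split $U=A\mathbin{\dot\cup}B$ with $|A|,|B|\ge an$ and $e_G(A,B)<\theta_{i+1}n^2$. We replace $U$ by $A$ and $B$ to obtain $\mathcal P_{i+1}$. All parts still have size at least $an$, and
\begin{equation*}
 e(C_{i+1})<i\theta_i n^2+\theta_{i+1}n^2\le (i+1)\theta_{i+1}n^2 ,
\end{equation*}
using $\theta_i\le\theta_{i+1}$, which follows from the hierarchy. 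So the invariant is preserved at index $i+1$.
\end{itemize}

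The process cannot run forever. Every part has size at least $an$, so there are at most $\lfloor 1/a\rfloor<K$ parts. Since $\mathcal P_i$ has $i+1$ parts, we need $i+1\le 1/a<K$, so the procedure stops at some index $i\le K-1$, as required by $i<K$. The hierarchy is used only through the monotonicity $\theta_i\le\theta_{i+1}$ in this bookkeeping. The point that needs care is the index convention: the cross-edge bound carries the current index $i$, while the robustness is measured at the next level $\theta_{i+1}$. With the invariant stated as above, the argument closes directly.
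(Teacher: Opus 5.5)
Your proof is correct. It uses the same iterated-splitting idea as the paper, with different bookkeeping.

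The paper runs levels $t=0,\dots,K$. At level $t$ it splits repeatedly with the fixed threshold $\theta_t$. Since fewer than $K$ splits occur in total, some level satisfies $\mathcal P_i=\mathcal P_{i+1}$. The paper then bounds the cross edges by (number of splits)$\cdot\theta_i n^2<K\theta_i n^2$.

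You instead advance the threshold index after every single split, so your index equals the number of splits performed. This buys two things:
\begin{itemize}
\item Stopping gives robustness at $\theta_{i+1}$ directly, with no pigeonhole over levels.
\item You get the slightly sharper bound $e(C)\le i\theta_i n^2$ with $i\le\lfloor 1/a\rfloor-1$.
\end{itemize}
The paper's version is the standard "refine until stable" scheme. Both arguments use the hierarchy only through $\theta_t\le\theta_i$ for $t\le i$.

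One cosmetic slip: your invariant $e(C_i)<i\theta_i n^2$ is false at $i=0$, where it reads $0<0$. State it instead as $e(C_i)\le i\theta_i n^2$. This still propagates, because each split adds fewer than $\theta_{i+1}n^2$ edges. At the output it gives $e(C_i)\le(K-1)\theta_i n^2<K\theta_i n^2$.
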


\begin{proof}
Set $\mathcal P_{-1}:=\{V(G)\}$. For $t=0,1,\ldots,K$, obtain
$\mathcal P_t$ from $\mathcal P_{t-1}$ by repeatedly splitting a
current part $U$ as
$U=A\mathbin{\dot\cup}B$ whenever
\begin{equation}
|A|,|B|\ge an
\quad\text{and}\quad
e_G(A,B)<\theta_tn^2.
\end{equation}
Every part created by this process has at least $an$ vertices. Hence
each partition has at most $1/a$ parts. Since each split increases the
number of parts by one and $K>1/a$, fewer than $K$ splits occur in
total. Therefore, there is an $i<K$ such that
\begin{equation}\label{eq:stable-partition}
\mathcal P_i=\mathcal P_{i+1}.
\end{equation}
Let $C$ be the graph formed by the edges joining distinct parts of this
common partition. Every edge of $C$ is counted at the unique split that
first separates its endpoints. Each such split occurred at some level
$t\leq i$ and cut fewer than $\theta_tn^2\leq\theta_in^2$ edges. Since
there were fewer than $K$ splits,
\begin{equation}
e(C)<K\theta_i n^2.
\end{equation}
On the other hand, \eqref{eq:stable-partition} and terminality at level
$i+1$ imply that every part $U\in\mathcal P_i$ satisfies
\begin{equation}
e_G(A,B)\ge\theta_{i+1}n^2
\end{equation}
for every partition $U=A\mathbin{\dot\cup}B$ with
$|A|,|B|\ge an$.
\end{proof}

Given a graph $G$, a set $X\subseteq V(G)$, a set
$\Gamma\subseteq\col(G)$, and a partition
$V(G)\setminus X=V_1\mathbin{\dot\cup}\cdots\mathbin{\dot\cup}V_k$,
\textbf{an ordered list of pieces} is a list
$\mathcal Q=(P_1,\ldots,P_r)$ of $r\geq0$ paths, each with a specified
first and last vertex, satisfying the following conditions. The paths
$P_1,\ldots,P_r$ are pairwise vertex-disjoint, and their sets of colors
are pairwise disjoint. Each $P_i$ is either a two-edge path $uxv$ with
$x\in X$ and $u,v\in V(G)\setminus X$, or a single edge of $G-X$ whose
color lies in $\Gamma$. For every $i\in\{1,\ldots,r-1\}$, there exists
$j_i\in\{1,\ldots,k\}$ such that the last vertex of $P_i$ and the first
vertex of $P_{i+1}$ both lie in $V_{j_i}$. The indices $j_i$ need not be
distinct.

\begin{lemma}[Organizing exceptional edges]\label{lem:exceptional-sequences}
Let $X\subseteq V(G)$ and $\Gamma\subseteq\col(G)$. Suppose that
$V(G)\setminus X=V_1\mathbin{\dot\cup}\cdots\mathbin{\dot\cup}V_k$.
Fix $\eta>0$. For each $j\in[k]$, set $m_j:=|V_j|$, and put
\begin{equation}
s:=|X|+|\Gamma|,\qquad
q_j:=\lceil(1+\eta)\frac{m_j}{2}\rceil+20s.
\label{eq:sequence-index-count}
\end{equation}
Let $\mathcal T_1,\ldots,\mathcal T_k$ be pairwise disjoint index sets
with $|\mathcal T_j|=q_j$. For every
$t\in\bigcup_{j=1}^k\mathcal T_j$, there is an ordered list of pieces
$\mathcal Q_t$. Together, these lists cover every edge in
\begin{equation}
E_G(X,V(G)\setminus X)
 \cup\bigcup_{\gamma\in\Gamma}E_\gamma(G-X),
\label{eq:exceptional-interface}
\end{equation}
except possibly some edges forming a graph $L$. This graph satisfies
\begin{equation}
\Delta(L),\mu_c(L)\le |X|.
\label{eq:exceptional-leftover}
\end{equation}
If $t\in\mathcal T_j$ and $\mathcal Q_t$ is non-empty, its first piece
starts in $V_j$. Each list contains at most $s$ pieces. Every vertex in
$V(G)\setminus X$ occurs in at most $s$ lists. Every color outside
$\Gamma$ occurs in at most $|X|$ lists.
\end{lemma}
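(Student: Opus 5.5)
We build the lists greedily, one batch of edges at a time. The goal is to keep the number of lists containing each vertex and each color under control. Let $E^*$ denote the edge set in \eqref{eq:exceptional-interface}. We split the work into two stages.

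\emph{First stage: edges at $X$.} For each $x\in X$ we pair up the edges of $E_G(x,V(G)\setminus X)$ into two-edge pieces $uxv$. Properness at $x$ makes the two colors distinct, so each such piece is rainbow. If $d(x)$ is odd, at most one edge at $x$ is left unpaired; we put it into $L$. The edges of $G[X]$ are not in $E^*$ and need no treatment. Then $L$ gains at most one edge per vertex of $X$. A vertex $u\notin X$ can receive at most $|X|$ such edges, and a color at most $|X|$. So $\Delta(L),\mu_c(L)\le|X|$.

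\emph{Second stage: single-edge pieces.} The edges of color $\gamma\in\Gamma$ inside $G-X$ become single-edge pieces. For each $\gamma$ these edges form a matching.

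\emph{Distribution into lists.} We assign all pieces to lists. The pieces belonging to one $x\in X$ all share the vertex $x$, so no two of them may go into the same list. The same holds for the pieces of one color $\gamma\in\Gamma$, which share a color. Each $x$ or $\gamma$ therefore contributes one piece to each of many lists. This is what forces "each list contains at most $s$ pieces": a list holds at most one piece per element of $X\cup\Gamma$, giving at most $|X|+|\Gamma|=s$ pieces.

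So for each element $z\in X\cup\Gamma$ we distribute its pieces to distinct lists, at most one piece of $z$ per list. The number of pieces per $z$ is at most $n/2$, which is fewer than $\sum_j q_j\ge(1+\eta)(n-|X|)/2$. There is one difficulty with this count: the pieces must also be vertex-disjoint and color-disjoint within a list. Pieces belonging to different $z$ may share outer vertices $u$, or share colors. We handle this by assigning the elements $z$ one at a time. Each piece is placed in a list that contains no piece conflicting with it.

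\emph{Concatenation.} Within a list, consecutive pieces must satisfy the linking condition: the last vertex of one and the first vertex of the next lie in a common part $V_{j_i}$. For a two-edge piece $uxv$ we may choose which end is first, but in general two arbitrary pieces do not link. This gives the main obstacle, and also the role of the $20s$ slack in $q_j$. Our plan is to order the pieces in a list by the part containing their endpoints and to insert them so that consecutive pieces meet in the same part. The difficulty is that a single edge $uv$ with $u\in V_1$ and $v\in V_2$ bridges two parts, so the list walks across parts. We therefore treat each piece as an arc between parts and route it along a walk of this kind. To make the first piece start in $V_j$ for $t\in\mathcal T_j$, we choose the list for each piece according to the part of its first endpoint. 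The $20s$ spare indices in each $\mathcal T_j$ give room for lists that start in any part.

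\emph{Counting.} A vertex $u\notin X$ lies in at most one piece per element $z$, since a color class is a matching and $x$ gives $u$ one edge. So $u$ occurs in at most $s$ lists. Similarly, a color outside $\Gamma$ only appears on the two-edge pieces at $x$. By properness, for each $x$ at most one edge of that color meets $x$, so the color occurs in at most $|X|$ lists.

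We expect the hardest step to be reconciling, within the index budget $q_j$, the disjointness requirements inside each list with the linking constraints between consecutive pieces. The first thing to try is a greedy argument that exploits the slack $(1+\eta)m_j/2+20s$.
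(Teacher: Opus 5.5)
\textbf{There is a genuine gap: you leave open the step that makes the lemma work.} Your construction of the pieces, the bound on $L$, and the final counting are all correct and match the paper. That counting covers at most $s$ pieces per list, each vertex outside $X$ in at most $s$ lists, and each color outside $\Gamma$ in at most $|X|$ lists. But the step you yourself call the hardest is never resolved. You give no mechanism that enforces, all at once, the linking condition, the requirement that a list indexed by $t\in\mathcal T_j$ starts in $V_j$, and the \emph{local} index budget $q_j$.

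Your global count (at most $n/2$ pieces per element versus $\sum_j q_j$ lists) is irrelevant. A piece can only be appended to a list whose last vertex lies in the part where the piece starts, and only about $(1+\eta)m_j/2+20s$ lists are available at $V_j$. Ordering the pieces of a list after they are assigned cannot work in general: a list holding one piece between $V_1$ and $V_2$ and one between $V_3$ and $V_4$ admits no valid order. Choosing ``the list according to the part of its first endpoint'' also fails unless orientations are controlled. If $x$ is adjacent to all of $V_1$ and $V_2$ with $m_1=m_2=m$, and you pair its edges across, then orienting every piece out of $V_1$ needs $m$ distinct lists currently ending in $V_1$. Moreover, over many stages the lists drift between parts, so a later stage may find too few lists ending in some $V_j$.

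\textbf{The missing idea.} Give every index a current location in $[k]$, initially $j$ for $t\in\mathcal T_j$. Process the elements of $X$ and then of $\Gamma$ one stage at a time.
\begin{enumerate}
\item At each stage, view the pieces as edges of a quotient multigraph on $[k]$, with loops allowed. Orient the non-loop edges so that in-degree and out-degree differ by at most one at every vertex.
\item The endpoints of the pieces of one stage are distinct vertices. Hence the number of pieces leaving or looping at $j$ is at most $\lceil d_j/2\rceil\le\lceil m_j/2\rceil$.
\item The number of indices located at $j$ changes by at most one per stage, so it stays at least $q_j-s$.
\item Each piece is inadmissible for at most $4s$ indices: each outer vertex lies in at most $s$ lists and each color in at most $|X|$. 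So every piece has at least $q_j-5s\ge\lceil m_j/2\rceil$ admissible indices at its tail.
\item A greedy (or Hall) assignment of the pieces of the stage is therefore injective. Moving each assigned index along its oriented piece automatically gives the linking condition and the first-piece condition.
\end{enumerate}
This is exactly how the paper spends the $20s$ slack.
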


\begin{proof}
Process first the vertices of $X$ and then the colors of
$\Gamma$, one at a time.  At the stage of $x\in X$, if
$d_G(x,V(G)\setminus X)$ is odd, put one incident edge into $L$; pair
the remaining star edges to form two-edge pieces $u x v$.  At the stage
of $\gamma\in\Gamma$, use as pieces all edges of the matching
$E_\gamma(G-X)$.  Replace every piece by an edge of the quotient
multigraph on $[k]$, allowing loops.  Orient its non-loop edges so that
the indegree and outdegree at each quotient vertex differ by at most
one; orient each loop arbitrarily as an actual path, but let it stay at
its quotient vertex.  If the quotient degree at $j$ is $d_j$, the number
of loops plus edges directed out of $j$ is at most
\begin{equation}
\lceil\frac{d_j}{2}\rceil
 \le\lceil\frac{m_j}{2}\rceil.
\label{eq:quotient-outdegree}
\end{equation}

At the beginning of a stage, an index $t$ currently at a quotient vertex
is admissible for an oriented piece if that piece repeats no actual
vertex or color already used in $\mathcal Q_t$. After any number of stages,
the number of indices currently in $V_j$ differs from $q_j$ by at most
the number of completed stages, and hence by at most $s$.  A vertex of
$V(G)\setminus X$ can have occurred in at most one earlier edge at each
vertex of $X$ and in at most one earlier edge of each color in $\Gamma$.
A color outside $\Gamma$ can have occurred on at most one edge at each
vertex of $X$.  The same is true of a color in $\Gamma$ during all
$X$-stages; hence, at its own color stage, it has occurred previously
on at most $|X|$ edges incident with $X$. It follows that each piece is
inadmissible for at most $4s$ current indices. By
\eqref{eq:sequence-index-count}, \eqref{eq:quotient-outdegree}, and the
extra $20s$ indices, every piece at $j$ has a list of at least as many
admissible indices as the total number of pieces directed out of or
looping at $j$.  Hall's theorem gives an injective assignment of the
pieces to indices. Move each assigned index along its oriented piece.
The balanced orientation changes the number of indices at any quotient
vertex by at most one, closing the induction.

No index receives two pieces in one stage. Thus the central vertices
$x$, and later the colors $\gamma$, do not repeat in any $\mathcal Q_t$;
the lists prevent all other repetitions. This proves the assertions about
the lists. The discarded graph $L$ has at most one edge at each
$x\in X$, so \eqref{eq:exceptional-leftover} follows
from properness.
\end{proof}

We are now ready to present the proof of
Theorem~\ref{thm:linear-minimum-degree}(i). We first explain its main
steps.

Lemma~\ref{lem:nested-robust-decomposition} partitions $V(G)$ into a
bounded number of large parts. Each part has many edges across every large
cut, while few edges join different parts. The cross edges may, however, be
concentrated at a few vertices or in a few colors. Let $X$ be the set of
vertices with large cross-degree. After deleting $X$, let $\Gamma$ be the
set of colors with large multiplicity among the remaining cross edges.
Inside each part, deleting $X$ and all edges with colors in $\Gamma$ leaves
a dense robust graph $H_j$. The other cross edges have small maximum degree
and small color multiplicity.

The exceptional edges must still be covered.
Lemma~\ref{lem:exceptional-sequences} places the edges with exactly one
endpoint in $X$, and the $\Gamma$-colored edges outside $X$, into ordered
lists $\mathcal Q_t$, apart from a small leftover. For every
$t\in\mathcal T_j$, Proposition~\ref{lem:forbidden-packing} constructs a
globally rainbow linear forest $F_t\subseteq H_j$. It also gives reserves
$W_t$ and $R_t$ assigned to $t$. The forest avoids all vertices and colors
used by $\mathcal Q_t$. The order of $\mathcal Q_t$ ensures that each
required join can be made inside one of the dense graphs. Using $W_t$ and
$R_t$, we join the components of $F_t$ and the pieces of $\mathcal Q_t$ into
one rainbow path. Thus the exceptional edges placed in the lists do not
require additional main paths. Here $X$ and $\Gamma$ isolate the
concentrations; $W_t$ and $R_t$ are the actual connector reserves.

The edges still uncovered form a graph of small maximum degree and small
color multiplicity. They lie among the other cross edges, the small
leftover above, the edges inside $X$, and the internal residual graphs left
by the packing. We split this graph into globally rainbow matchings and
apply Lemma~\ref{lem:forest-extension} to each matching. The main family
uses at most $n/2+\varepsilon n/4$ paths, and the residual graph uses at most
$\varepsilon n/4$ further paths.

\begin{proof}[Proof of Theorem~\ref{thm:linear-minimum-degree}(i)]
Assume that $0<\varepsilon\le1$. We first fix the constants. Set
\begin{equation}
a:=\frac{\alpha}{16},
\qquad
K:=\lceil\frac1a\rceil+2.
\end{equation}
Define
\begin{equation}
A_\alpha:=\frac{12}{\alpha^2}+\frac6\alpha+1,
\qquad
B_\alpha:=2+5K.
\end{equation}
These constants collect the losses in the final path counts. Choose
\begin{equation}
0<\sigma,\tau\ll\eta\ll\varepsilon,\alpha.
\end{equation}
We also require $\eta\le\varepsilon/8$. Choose
\begin{equation}
0<\lambda\ll\varepsilon,\alpha.
\end{equation}
Decrease $\lambda$, if necessary, so that $\lambda\le\alpha/4$ and
\begin{equation}
A_\alpha\lambda\le\frac{\varepsilon}{16}.
\end{equation}
We choose the thresholds backward. First choose
$0<\theta_K\ll\lambda$. For $i=K-1,\ldots,0$, suppose that
$\theta_{i+1}$ has already been chosen. Apply
Lemma~\ref{lem:reserved-robust-connections} with
\begin{equation}
d=\frac\alpha2,
\qquad b=\frac{\theta_{i+1}}2
\end{equation}
and the fixed reserve probabilities. Let $L_i$ be the resulting length
bound, and write $\zeta_i$ for the constant $\zeta_*$ in
\eqref{eq:reserved-connections}. Choose a fixed integer $\ell_i$ so large
that
\begin{equation}
\frac1{\ell_i}\ll\zeta_i
\label{eq:forest-length-choice}
\end{equation}
and both inequalities in \eqref{eq:forbidden-indexed-degrees} have fixed
positive slack. Let $C_i\geq1$ be a constant for which
\eqref{eq:forbidden-residual} holds with this choice of $\ell_i$. Finally,
let $r_i^0>0$ be a common admissible upper bound for $r$ in
Lemma~\ref{lem:reserved-robust-connections} and
Proposition~\ref{lem:forbidden-packing}.

Now choose $0<\theta_i<\theta_{i+1}$ so small that, with
\begin{equation}
r_i:=\frac{3K\theta_i}{\lambda},
\label{eq:exceptional-rate}
\end{equation}
we have
\begin{equation}
r_i\ll
\min\{\theta_{i+1},\eta,\zeta_i,
            \frac{\varepsilon}{A_\alpha B_\alpha C_i},
            \frac{\varepsilon}{K}\},
\qquad
\frac1{\ell_i}+r_i\ll\zeta_i.
\label{eq:exceptional-rate-choice}
\end{equation}
We also impose
\begin{equation}
r_i\le
\min\{
 \frac{\eta\alpha}{200},
 \frac{\varepsilon}{160K},
 \frac{\varepsilon}{16A_\alpha B_\alpha C_i},
 \frac{r_i^0}{5},
 \frac{\theta_{i+1}}2,
 \frac\alpha4
\}.
\label{eq:exceptional-rate-explicit}
\end{equation}
This recursion is well-defined: $L_i,\zeta_i,\ell_i,C_i$, and $r_i^0$
depend only on constants fixed before $\theta_i$ is chosen. Increase
$n_0=n_0(\alpha,\varepsilon)$, if necessary, so that $n$ exceeds the
finitely many lower thresholds in
Lemma~\ref{lem:reserved-robust-connections} and
Proposition~\ref{lem:forbidden-packing} for $i=0,\ldots,K-1$. We also
require $n$ to exceed the thresholds in all subsequent asymptotic
estimates.

We now decompose $G$. Apply
Lemma~\ref{lem:nested-robust-decomposition}. Write
$\mathcal P=\{U_1,\ldots,U_k\}$ for the resulting partition, let $i<K$
be its level, and let $C$ be its cross-edge graph. Let $X$ contain the
vertices of large cross-degree:
\begin{equation}
X:=\{v:d_C(v)>\lambda n\}.
\end{equation}
In $C-X$, let $\Gamma$ contain the colors of large multiplicity:
\begin{equation}
\Gamma:=\{\gamma:|E_\gamma\cap E(C-X)|>\lambda n\}.
\end{equation}
By \eqref{eq:few-cross-edges},
\begin{equation}
\begin{aligned}
 |X|\lambda n&\leq 2e(C)<2K\theta_i n^2,\\
 |\Gamma|\lambda n&\leq e(C)<K\theta_i n^2.
\end{aligned}
\end{equation}
Thus, for $s:=|X|+|\Gamma|$,
\begin{equation}
s<\frac{3K\theta_i}{\lambda}n=r_i n.
\label{eq:exceptional-size}
\end{equation}

Since $|U_j|\ge an$ and $|X|<r_i n<an$, every set
$V_j:=U_j\setminus X$ is non-empty. Put
\begin{equation}
H_j:=G[V_j]-\bigcup_{\gamma\in\Gamma}E_\gamma,
\qquad m_j:=|V_j|.
\end{equation}
Fix $v\in V_j$. Deleting the cross edges costs at most $\lambda n$
edges at $v$, and deleting $X$ costs at most $|X|$ more. By properness,
deleting the colors in $\Gamma$ costs at most $|\Gamma|$ edges. Hence,
by \eqref{eq:exceptional-size} and
\eqref{eq:exceptional-rate-explicit},
\begin{equation}
d_{H_j}(v)
 \ge\alpha n-\lambda n-|X|-|\Gamma|
 \ge\frac\alpha2n.
\label{eq:internal-minimum-degree}
\end{equation}
In particular, $m_j\ge\alpha n/2$. Now suppose that
$V_j=A\mathbin{\dot\cup}B$ with $|A|,|B|\ge an$. Apply
\eqref{eq:robust-parts} inside $U_j$ to the cut
$A\mathbin{\dot\cup}(B\cup(X\cap U_j))$. Deleting the edges from $A$
to $X$, and then all edges with colors in $\Gamma$, gives
\begin{equation}
e_{H_j}(A,B)
 \ge\theta_{i+1}n^2-|X|n-|\Gamma|n
 \ge\frac{\theta_{i+1}}2n^2.
\label{eq:internal-robustness}
\end{equation}
Thus every $H_j$ has the minimum-degree and robust-cut properties used
to define $L_i$ and $\zeta_i$.

Let $R_{\mathrm{cross}}$ consist of the cross edges of $C-X$ whose
colors do not belong to $\Gamma$. By definition,
\begin{equation}
\Delta(R_{\mathrm{cross}}),
\mu_c(R_{\mathrm{cross}})\le\lambda n.
\label{eq:cross-residual}
\end{equation}

We next organize the edges removed before the packing.
Apply Lemma~\ref{lem:exceptional-sequences} to the sets $V_j$, the
exceptional vertex set $X$, and the exceptional color set $\Gamma$.
The resulting lists $\mathcal Q_t$ contain every edge with exactly one
endpoint in $X$, except for the edges of the graph $L$ in
\eqref{eq:exceptional-leftover}. They also contain every
$\Gamma$-colored edge in $G-X$.

For every $t$, let $A_t:=V(\mathcal Q_t)$ and
$B_t:=\operatorname{col}(\mathcal Q_t)$. Since $\mathcal Q_t$ has at
most $s$ pieces, $|A_t|\le3s$ and $|B_t|\le2s$. For fixed $j$, each
vertex of $H_j$ and each color appearing in $H_j$ occurs in at most
$s$ of the corresponding forbidden sets. The only possible larger
multiplicities come from $X$ and $\Gamma$, which do not occur in
$H_j$. Thus the forbidden-set assumptions in
Proposition~\ref{lem:forbidden-packing} hold with
$r=5s/n<5r_i\le r_i^0$.

By \eqref{eq:sequence-index-count},
$q_j\ge(1+\eta)m_j/2$. Moreover,
$q_j\le(1+\eta)m_j/2+1+20s$. The bounds
$m_j\ge\alpha n/2$, $s<r_i n$, and
\eqref{eq:exceptional-rate-explicit} give
$1+20s\le\eta m_j/2$ for sufficiently large $n$. Hence
\begin{equation}
(1+\eta)\frac{m_j}{2}
 \le q_j
 \le(1+2\eta)\frac{m_j}{2}.
\end{equation}
We may therefore apply Proposition~\ref{lem:forbidden-packing} to
$H_1,\ldots,H_k$ with $\ell=\ell_i$. It supplies a linear forest $F_t$
for every $t$, reserve sets $W_t,R_t$, and internal residual graphs
$J_j$ satisfying \eqref{eq:forbidden-residual}.

We now join the forests to their exceptional lists. Fix
$t\in\mathcal T_j$. Form an ordered sequence $\mathcal S_t$ by first
listing and orienting the components of $F_t$ arbitrarily, and then
appending the pieces of $\mathcal Q_t$ in their prescribed order.
Consecutive components of $F_t$ have their relevant endpoints in
$V_j$. If both $F_t$ and $\mathcal Q_t$ are non-empty, the first piece
of $\mathcal Q_t$ starts in $V_j$, so the join between them can also be
made there. Finally, for two consecutive pieces of $\mathcal Q_t$, the
relevant endpoints lie in a common set $V_{j'}$ by the definition of an
ordered list. Thus every required join is between two distinct vertices
of some $H_{j'}$.

The paths in $\mathcal S_t$ are pairwise vertex-disjoint and together
are rainbow. Indeed, these properties hold separately for $F_t$ and
$\mathcal Q_t$, while $F_t$ avoids
$A_t=V(\mathcal Q_t)$ and $B_t=\operatorname{col}(\mathcal Q_t)$.
Moreover, $W_t\cap V(\mathcal Q_t)=\varnothing$ and
$R_t\cap\operatorname{col}(\mathcal Q_t)=\varnothing$, while $F_t$
avoids $W_t$ and all colors in $R_t$. We use internal vertices from
$W_t$ and colors from $R_t$ to make the joins. Since $F_t$ has at most
$n/(\ell_i+1)$ components and $\mathcal Q_t$ has at most $s$ pieces,
the number of joins is at most
\begin{equation}
\frac{n}{\ell_i+1}+s.
\label{eq:number-of-joins}
\end{equation}
For each join, \eqref{eq:reserved-connections} gives at least
$\zeta_i n^{h-1}$ candidate connectors of some length $h\le L_i$. A
reserve vertex or color used earlier for this index belongs to only
$O_{L_i}(n^{h-2})$ candidates. By
\eqref{eq:exceptional-rate-choice}, \eqref{eq:exceptional-size}, and
\eqref{eq:number-of-joins}, all previously used reserve vertices and
colors together exclude at most
\begin{equation}
O_{L_i}\left((\frac1{\ell_i}+r_i)n^{h-1}\right)
 <\zeta_i n^{h-1}
\end{equation}
candidates. We can therefore choose the connectors greedily. Each new
connector avoids every reserve vertex and color used earlier for this
index. Its internal vertices and colors also avoid all paths in
$\mathcal S_t$. Adding the connectors in order produces one simple
rainbow path containing every path in $\mathcal S_t$. If
$\mathcal S_t$ is empty, omit $t$; if it has one member, no connector is
needed. Hence all edges in the forests $F_t$ and all edges contained in
the lists $\mathcal Q_t$ are covered by at most
\begin{equation}
q:=\sum_{j=1}^kq_j
\label{eq:forest-exceptional-path-count}
\end{equation}
rainbow paths.

We now cover the edges not yet covered. Every such edge belongs to
\begin{equation}
R^*:=R_{\mathrm{cross}}\cup L\cup G[X]
                  \cup\bigcup_{j=1}^kJ_j
\end{equation}
and
\begin{equation}
D:=\max\{\Delta(R^*),\mu_c(R^*)\}.
\end{equation}
The union $L\cup G[X]$ has maximum degree and color multiplicity at
most $2|X|$. Each vertex outside $X$ lies in only one $J_j$, while a
color may occur in at most $K$ of the graphs $J_j$. Since the packing
proposition was applied with $r=5s/n$, equations
\eqref{eq:forbidden-residual}, \eqref{eq:exceptional-size}, and
\eqref{eq:cross-residual} give
\begin{equation}
D\le\lambda n+B_\alpha C_i s+o(n)
 \le\bigl(\lambda+B_\alpha C_i r_i+o(1)\bigr)n.
\label{eq:final-residual-degree}
\end{equation}

By Lemma~\ref{lem:residual-matchings}, $E(R^*)$ can be partitioned into
at most $3D$ globally rainbow matchings. When $D=0$, this family is
empty. Apply Lemma~\ref{lem:forest-extension} with $\ell=1$ separately
to each non-empty matching. Since $e(R^*)\le nD/2$ and
$\delta(G)\ge\alpha n$, all residual edges are covered by at most
\begin{equation}
\begin{aligned}
 &(24+o(1))\frac{ne(R^*)}{\delta(G)^2}
 +(2+o(1))\frac{3nD}{\delta(G)}\\
 &\hspace{25mm}\le
 (\frac{12}{\alpha^2}+\frac6\alpha+o(1))D
\end{aligned}
\label{eq:final-residual-cover}
\end{equation}
additional rainbow paths.

It remains to count the two families of paths. Since $k\le K$,
\eqref{eq:sequence-index-count}, \eqref{eq:exceptional-size}, and
\eqref{eq:exceptional-rate-explicit} give
\begin{equation}
\begin{aligned}
q
 &\le(1+\eta)\frac{n-|X|}{2}+20Ks+K
 \\&\le\frac n2+\frac{\eta n}{2}+20Kr_i n+K
 \\&\le\frac n2+\frac{\varepsilon n}{4}.
\end{aligned}
\label{eq:principal-path-count}
\end{equation}
For all sufficiently large $n$, the coefficient in
\eqref{eq:final-residual-cover} is at most $A_\alpha$. Therefore,
\eqref{eq:final-residual-degree} and the choices of $\lambda$ and $r_i$
bound the number of residual paths by
\begin{equation}
A_\alpha\bigl(\lambda+B_\alpha C_i r_i+o(1)\bigr)n
 \le\frac{\varepsilon n}{4}.
\label{eq:residual-path-count}
\end{equation}
Together with \eqref{eq:principal-path-count}, this proves
\begin{equation}
\rpc(G,c)
 \le\frac n2+\frac{\varepsilon n}{2}
 =(1+\varepsilon)\frac n2.
\end{equation}

For sharpness, take $G=K_n$. Give all edges of a fixed maximum matching
$M$ one common color and give every edge outside $M$ its own new color.
This is a proper edge-coloring, and a rainbow path contains at most one
edge of $M$. Hence
\begin{equation}
\rpc(K_n,c)\ge |M|=\lfloor\frac n2\rfloor.
\end{equation}
For every fixed $\alpha<1$, the graph $K_n$ satisfies
$\delta(K_n)\ge\alpha n$ for all sufficiently large $n$. Thus the
leading coefficient $1/2$ in part~(i) cannot be decreased.
\end{proof}

\section{Complete multipartite graphs}\label{sec:complete-multipartite}

Theorem~\ref{thm:linear-minimum-degree}(ii) proves the required upper
bound when the minimum degree is linear in $n$. It remains to consider
complete multipartite graphs in which one part contains almost all
vertices. We first prove a direct lemma for this case and then complete
the proof of Theorem~\ref{thm:multipartite-main}.

\subsection{A dominant part}\label{subsec:dominant-part}

\begin{lemma}[A dominant part]\label{lem:dominant}
Let $H$ be a graph on $h$ vertices, and let $B$ be an independent set
of $L$ new vertices. Let $G$ be obtained from $H$ by adding the
vertices of $B$ and all edges between $B$ and $V(H)$. If $L\geq20h$,
then every proper edge-coloring $c$ of $G$ satisfies
\begin{equation}
 \rpc(G,c)\leq\lceil\frac L2\rceil+3h.
\end{equation}
\end{lemma}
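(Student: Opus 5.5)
The plan is to split $E(G)$ into the bipartite part $E(V(H),B)$ and the edges of $H$. The bipartite part should be covered by about $L/2$ long rainbow paths that alternate between $B$ and $V(H)$; the edges of $H$, together with a small number of repair edges, should be covered by at most $3h$ further paths. The count $\lceil L/2\rceil$ is forced by degrees. Every $v\in V(H)$ has $L$ neighbours in $B$, so if each main path passes through every $v$ as an internal vertex, we need exactly $L/2$ paths. Write $V(H)=\{v_1,\ldots,v_h\}$. The intended main paths have the form
\begin{equation*}
 b_0\,v_{\pi(1)}\,b_1\,v_{\pi(2)}\,b_2\cdots v_{\pi(h)}\,b_h,
\end{equation*}
with $2h$ edges, and the edges at each $v$ over all main paths should form a pairing of $B$. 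As a template, index $B$ cyclically as $b_0,\ldots,b_{L-1}$, assuming $L$ even (add one dummy vertex if $L$ is odd). For even $j$, let $P_j$ use the edges $b_{j+k-1}v_k$ and $b_{j+k}v_k$. For fixed $k$, the pairs $\{j+k-1,j+k\}$ with $j$ even partition $\mathbb Z_L$. Hence the $L/2$ paths cover every $B$--$V(H)$ edge exactly once, and each $P_j$ is simple because $h+1\le L$.

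The main obstacle is to make these paths rainbow. The template does not control colours, so I will retain the structure but exploit two freedoms. First, choose the cyclic order of $B$ and, for each $v_k$, a separate perfect matching $M_k$ of $B$ in place of the consecutive pairing. Second, choose the order in which each path visits $V(H)$. I would build the paths greedily, or by random choice followed by deletion. A colour conflict inside a path involves two of its $2h$ edges. By properness, once a path's current endpoint is fixed, each of the fewer than $2h$ colours already on the path excludes at most one continuation in $B$. Since $L\ge20h$, most choices remain available at every step, except near the end of the process, when the unused edges at each $v$ become scarce.

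My first approach is to accept a few failures. Whenever a greedy step is blocked, the offending edge is cut out, which splits the path into two rainbow paths. I would then aim to show that the discarded edges form a graph $R$ with $\Delta(R)$ and $\mu_c(R)$ of order $h$, and with bounded degree at each $v$. Then Lemma~\ref{lem:residual-matchings} partitions $R$ into $O(h)$ rainbow matchings, and each matching is extended to a rainbow path as follows. For a rainbow matching $x_1y_1,\ldots,x_sy_s$, join $y_i$ to $x_{i+1}$ by a two-edge connector $y_i\,z\,x_{i+1}$ through a fresh $z$; the connector must lie in $B$ if $y_i,x_{i+1}\in V(H)$, and in $V(H)$ otherwise. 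For the $B$-connectors, properness together with $|B|=L\ge20h$ leaves a fresh connector with new colours at every step, exactly as in the greedy argument of Lemma~\ref{lem:matching-extension}.

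The edges of $H$ are handled in the same way. By Lemma~\ref{lem:residual-matchings} with $D\le h$, $E(H)$ splits into rainbow matchings, and each is threaded into one rainbow path using vertices of $B$ as connectors, $u_1w_1\,b\,u_2w_2\,b'\cdots$. This is possible because only $O(h)$ colours and vertices are used while $|B|\ge20h$. The final step is the accounting, which must fit into $3h$ extra paths. To get there I would sharpen the matching count for $H$ to roughly $h$, using that the conflict graph among edges of $H$ has degree at most $2(h-2)+\mu_c(H)$, or merge the repair edges into the paths built for $H$. If the greedy analysis for the alternating paths cannot keep the repairs within about $h$ paths, this is where I expect the plan to need the most work: for instance, reserving about $h$ vertices of $B$ as a colour-rich buffer that completes the last few steps of each path.
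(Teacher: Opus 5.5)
\textbf{There is a genuine gap.} Your architecture matches the paper's: $\lceil L/2\rceil$ alternating paths $b_0a_1b_1\cdots a_hb_h$ that use every $B$--$V(H)$ edge exactly once, together with rainbow matchings of $H$ threaded through fresh vertices of $B$. The gap is the step you flag yourself. You never show that the main paths can be made simultaneously simple and rainbow, and the repair route cannot fit within the budget.

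Since $\lceil L/2\rceil$ paths must supply all $L$ edges at each $a_i$, every main path must pass through every $a_i$ using two $B$-edges. So the main family has no slack, and each blocked step costs an extra path.
\begin{itemize}
\item Neither greedy choices nor random bijections followed by deletion control the number of failures. Each path has about $2h^2$ non-adjacent edge pairs. For a Latin-square coloring $c(a_ib_k)=i+k \bmod L$ of the bipartite part, a positive fraction of these pairs is monochromatic with probability of order $1/p$. Summed over the $p=L/2$ paths, this gives order $h^2$ collisions.
\item In a greedy, the last $\Theta(h)$ assignments at each position have no guaranteed candidate.
\item Even $O(h)$ failures are unaffordable. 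Covering $E(H)$ already takes up to $2h+\lfloor h/2\rfloor$ matchings by the conflict-graph bound; Lemma~\ref{lem:residual-matchings} with $D\le h$ would give nearly $3h$. Odd $L$ costs about $h/2$ more. Your proposed sharpening to ``roughly $h$'' matchings does not follow from the degree bound $2(h-2)+\mu_c(H)$ that you quote.
\item Repair matchings of $B$--$V(H)$ edges need some joins between two vertices of $B$, whose connectors must lie in $V(H)$. Those $h$ vertices may already be used by the matching, so that extension step is also unjustified.
\end{itemize}

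\textbf{The missing idea} is to build all $p$ paths at once, one position at a time, obtaining each position from a perfect matching via Hall's theorem, so that no failures occur.
\begin{itemize}
\item Fix one common order $a_1,\ldots,a_h$ for all paths. Do not vary it per path; the counting below depends on it.
\item Split $B$ into halves $X_0,X_1$ of size $p$, so that the neighbors of $a_i$ before and after it come from different halves. Bijectivity then gives exact coverage.
\item At step $j$, join partial path $t$ to $b\in X_{j\bmod 2}$ when $b$ is new on $t$ and neither $c(a_jb)$ nor $c(ba_{j+1})$ occurs on $t$. Looking ahead to $a_{j+1}$ checks both new edges at once.
\item Properness at $a_j$ and $a_{j+1}$ shows that each $t$ loses fewer than $5h$ candidates.
\item Conversely, $b$ lies on fewer than $h$ partial paths, because earlier positions were filled by bijections. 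Its two new colors do not depend on $t$. At each earlier edge position the edges form a star at a fixed $a_i$, so each of these colors occurs on at most one partial path there. Hence each $b$ loses fewer than $5h$ paths.
\item With $p\geq 10h$, both sides have minimum degree at least $p/2$. Hall's condition holds, and the perfect matching is the next bijection $\pi_j$.
\end{itemize}
This two-sided control is exactly what a sequential greedy lacks.

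For odd $L$, delete one $z\in B$ rather than adding a dummy vertex, whose internal occurrences would split paths at an uncounted cost. Note that $L-1\geq 20h$ still holds, so the construction applies to the remaining $L-1$ vertices. Cover the $h$ edges at $z$ by $\lceil h/2\rceil$ paths of length at most two. The total is then at most $\lceil L/2\rceil+3h$.
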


\begin{proof}
Fix a proper edge-coloring $c$ of $G$. The case $h=0$ is immediate,
so assume that $h\geq1$ and write
$V(H)=\{a_1,\ldots,a_h\}$. We first cover all edges between $B$ and
$V(H)$. We then cover the edges inside $H$.

Suppose first that $L=2p$. Each vertex $a_i$ is incident with $2p$
edges joining it to $B$. Since a path can use at most two of these
edges, we seek $p$ paths, each passing through
$a_1,\ldots,a_h$ in this order. Partition $B$ into two sets $X_0$ and
$X_1$ of size $p$. We alternate between these sets along each path, so
each path has the form
\begin{equation}
 b_0a_1b_1a_2\cdots a_hb_h,
 \qquad b_j\in X_{j\bmod 2}.
\end{equation}
For a fixed $i$, the vertex immediately before $a_i$ belongs to
$X_{(i-1)\bmod 2}$, while the vertex immediately after $a_i$ belongs
to $X_{i\bmod 2}$. Therefore, if each vertex of these two sets occurs
exactly once in the corresponding position among the $p$ paths, then
the paths use every edge between $a_i$ and $B$ exactly once.

To guarantee this property simultaneously for every $i$, we choose,
for each $j\in\{0,\ldots,h\}$, a bijection
\begin{equation}
 \pi_j:[p]\longrightarrow X_{j\bmod 2}.
\end{equation}
The path indexed by $t\in[p]$ is then
\begin{equation}\label{eq:dominant-paths}
 P_t=\pi_0(t)a_1\pi_1(t)a_2\cdots a_h\pi_h(t).
\end{equation}
The bijectivity of the maps $\pi_j$ guarantees that the paths
$P_1,\ldots,P_p$ cover all edges between $B$ and $V(H)$. It remains
to choose these bijections so that every $P_t$ is simple and rainbow.

Choose $\pi_0$ arbitrarily. Suppose that
$\pi_0,\ldots,\pi_{j-1}$ have been chosen for some $j\in[h]$. For
each $t\in[p]$, the part of $P_t$ already constructed is
\begin{equation}
 \pi_0(t)a_1\pi_1(t)a_2\cdots\pi_{j-1}(t)a_j.
\end{equation}
We must assign one vertex of $X_{j\bmod 2}$ to each of these partial
paths. The assignment must use every vertex of $X_{j\bmod 2}$
exactly once, must not repeat a vertex on any path, and must not
repeat a color.

Define an auxiliary bipartite graph with left side $[p]$ and right
side $X_{j\bmod 2}$. Suppose first that $j<h$. Join
$t\in[p]$ to $b\in X_{j\bmod 2}$ when $b$ does not occur on the
partial path indexed by $t$ and neither $c(a_jb)$ nor
$c(ba_{j+1})$ occurs on that path. Such a pair allows us to extend
the path through the segment $a_jba_{j+1}$. The two new edges have
different colors because they are incident with the same vertex
$b$ and the coloring is proper. When $j=h$, join $t$ to $b$ when
$b$ is new and $c(a_hb)$ does not occur on the partial path.

We show that this auxiliary graph has a perfect matching. First fix
$t\in[p]$. Fewer than $h$ vertices of $X_{j\bmod 2}$ are forbidden
because they already occur on the partial path. The partial path has
fewer than $2h$ colors. For each old color, properness implies that
at most one candidate $b$ gives this color on an edge from $a_j$,
and at most one candidate gives this color on an edge to
$a_{j+1}$. Thus the old colors forbid fewer than $4h$ candidates.
In total, fewer than $5h$ candidates are forbidden for $t$.

Now fix $b\in X_{j\bmod 2}$. Since each earlier map $\pi_k$ is a
bijection, fewer than $h$ partial paths already contain $b$. There
are at most two colors on the proposed new edges. Fix one of these
colors and one earlier edge position in the partial paths. As
$t$ ranges over $[p]$, the edges in this position form a star: their
endpoint in $H$ is fixed, while their endpoint in $B$ runs through
one of $X_0,X_1$. Since the coloring is proper, these edges have
distinct colors. Therefore the fixed new color occurs on at most
one partial path at each earlier edge position. There are fewer than
$2h$ earlier positions, so the proposed new colors forbid fewer
than $4h$ partial paths. Hence $b$ is forbidden for fewer than $5h$
values of $t$.

Since $p\geq10h$, every vertex of the auxiliary bipartite graph has
degree at least $p-5h\geq p/2$. This implies Hall's condition. Indeed,
let $S\subseteq[p]$. If $|S|\leq p/2$, then
$|N(S)|\geq p/2\geq|S|$. If $|S|>p/2$ and $|N(S)|<|S|$, choose a
vertex $b\in X_{j\bmod 2}\setminus N(S)$. Every neighbor of $b$
then lies in $[p]\setminus S$, which has size less than $p/2$. This
contradicts the minimum-degree bound.

Hall's theorem gives a perfect matching. If $t$ is matched to $b$,
set $\pi_j(t)=b$. Since the matching uses every vertex on each side
exactly once, $\pi_j$ is a bijection. By the definition of the
auxiliary graph, every extended path remains simple and rainbow.
Repeating this argument for $j=1,\ldots,h$ produces the $p$ paths in
\eqref{eq:dominant-paths}. As explained above, these paths cover all
edges between $B$ and $V(H)$.

Suppose next that $L=2p+1$. Remove one vertex $z\in B$. Since $L$ is
odd and $20h$ is even, the assumption $L\geq20h$ implies
$L-1\geq20h$. The even construction therefore covers all edges
between $B\setminus\{z\}$ and $V(H)$ with $(L-1)/2$ rainbow paths.

The $h$ edges joining $z$ to $V(H)$ have distinct colors because
they are all incident with $z$. Pair these edges at $z$. Each pair
forms a rainbow path of length two. If $h$ is odd, the remaining
edge forms a path of length one. Thus all edges incident with $z$
are covered by $\lceil h/2\rceil$ further rainbow paths.

It remains to cover $E(H)$. If $E(H)$ is empty, no further paths are
needed. Otherwise, let $\Gamma$ be the conflict graph with vertex set
$E(H)$, in which two vertices are adjacent if the corresponding edges
of $H$ share an endpoint or have the same color. Thus an independent
set in $\Gamma$ corresponds to a rainbow matching in $H$.

Consider an edge $xy\in E(H)$ of color $\gamma$. It conflicts with at
most $d_H(x)-1$ other edges containing $x$, at most $d_H(y)-1$ other
edges containing $y$, and at most $|E_\gamma\cap E(H)|-1$ other edges
of color $\gamma$. Therefore
\begin{equation}
 \Delta(\Gamma)\leq 2\Delta(H)+\mu_c(H)-3.
\end{equation}
By the standard greedy coloring bound
$\chi(\Gamma)\leq\Delta(\Gamma)+1$, the graph $\Gamma$ has a proper
vertex-coloring with at most
$2\Delta(H)+\mu_c(H)-2$ colors. Its color classes are independent
sets and hence give a partition of $E(H)$ into that many rainbow
matchings. Since $\Delta(H)\leq h-1$ and every color class in $H$ is a
matching, $\mu_c(H)\leq\lfloor h/2\rfloor$. Consequently, at most
$2h+\lfloor h/2\rfloor-4$, and hence at most
$2h+\lfloor h/2\rfloor$, rainbow matchings are needed.

Consider one such matching,
$\{x_1y_1,\ldots,x_ty_t\}$. We turn it into one rainbow path by
joining consecutive matching edges through distinct vertices
$b_1,\ldots,b_{t-1}\in B$. The intended path is
\begin{equation}
 x_1y_1b_1x_2y_2b_2\cdots b_{t-1}x_ty_t.
\end{equation}
Choose the vertices $b_i$ successively. When choosing $b_i$, fewer
than $3t$ colors have already been used: the $t$ colors of the
matching edges and the colors of the earlier connector edges. By
properness, each used color excludes at most one candidate through
$y_i$ and at most one candidate through $x_{i+1}$. Thus colors
exclude fewer than $6t$ candidates. The earlier connector vertices
exclude fewer than $t$ further candidates. Since the matching has
$t\leq\lfloor h/2\rfloor$ edges, fewer than
$7t\leq7h/2<L$ candidates are excluded. Hence a suitable $b_i$
exists at each step. Properness also ensures that the two new edges
incident with $b_i$ have different colors. The resulting path is
therefore simple and rainbow.

The connector edges may already occur in the paths covering the
edges between $B$ and $V(H)$. This is allowed because a path cover
is not required to be edge-disjoint.

If $L$ is even, the total number of paths is at most
\begin{equation}
 \frac L2+2h+\lfloor\frac h2\rfloor
 \leq\frac L2+3h.
\end{equation}
If $L$ is odd, the total number is at most
\begin{equation}
 \frac{L-1}{2}+\lceil\frac h2\rceil
 +2h+\lfloor\frac h2\rfloor
 \leq\lceil\frac L2\rceil+3h.
\end{equation}
This proves the lemma.
\end{proof}

\subsection{Proof of Theorem~\ref{thm:multipartite-main}}\label{subsec:multipartite-final}

\begin{proof}
It is enough to consider $0<\varepsilon\leq1$. Choose
\begin{equation}\label{eq:alpha-choice}
 0<\alpha<\min\{\frac1{22},\frac\varepsilon{100}\}.
\end{equation}
Let $A$ be a largest part in $G$, put $M=|A|$, and write
$S=n-M=\delta(G)$. If $S\geq\alpha n$, the result follows from
Theorem~\ref{thm:linear-minimum-degree}(ii).

Suppose that $S<\alpha n$. Then $M\geq20S$ by
\eqref{eq:alpha-choice}. The graph induced by $V(G)\setminus A$ is a
graph $H$ on $S$ vertices. The part $A$ is independent, and every
vertex of $A$ is adjacent to every vertex of $H$. Therefore
Lemma~\ref{lem:dominant} gives
\begin{equation}
 \rpc(G,c)\leq\lceil\frac M2\rceil+3S.
\end{equation}
Since $G$ has at least two parts, its smallest part has size at most
$S$. Hence $\Delta(G)=n-s\geq M$, and therefore
$Q(G)\geq\Delta(G)/2\geq M/2$. Thus
\begin{equation}
 \frac{\lceil M/2\rceil+3S}{Q(G)}
 \leq1+\frac{6S+1}{M}
 \leq1+\frac{6\alpha}{1-\alpha}
       +\frac1{(1-\alpha)n}
 <1+\varepsilon
\end{equation}
for all sufficiently large $n$. This proves \eqref{eq:multipartite-upper}
with a threshold depending only on $\varepsilon$. Taking the maximum
over proper colorings and using \eqref{eq:universal-lower} gives
\eqref{eq:multipartite-asymptotic}. All thresholds depend only on
$\varepsilon$, so the conclusion is uniform in the number and sizes of
the parts.
\end{proof}

\bibliographystyle{plain}
\addcontentsline{toc}{section}{Bibliography}
\bibliography{rainbow}
\end{document}